\documentclass[reqno, 11pt, centertags,draft]{amsart}
\usepackage{amsmath,amsthm,amscd,amssymb,latexsym,upref,stmaryrd}
\usepackage{color,soul}

\topmargin = -1cm
\textheight = 21cm
\oddsidemargin = 0.3cm
\evensidemargin = 0.3cm
\textwidth = 16cm \headheight = 2cm \headsep = 5mm

\newtheorem{theorem}{Theorem}[section]
\newtheorem{proposition}[theorem]{Proposition}
\newtheorem{corollary}[theorem]{Corollary}
\newtheorem{lemma}[theorem]{Lemma}
\theoremstyle{definition}
\newtheorem{definition}[theorem]{Definition}
\theoremstyle{remark}
\newtheorem{remark}[theorem]{Remark}
\newtheorem{example}[theorem]{Example}

\numberwithin{equation}{section}

\DeclareMathOperator{\col}{col}
\DeclareMathOperator{\const}{const}
\DeclareMathOperator{\diag}{diag}
\DeclareMathOperator{\dom}{dom}
\DeclareMathOperator{\ran}{ran}
\DeclareMathOperator{\rank}{rank}
\DeclareMathOperator{\Span}{span}
\DeclareMathOperator{\supp}{supp}
\let\Re\relax
\DeclareMathOperator{\Re}{Re}
\let\Im\relax
\DeclareMathOperator{\Im}{Im}
\DeclareMathOperator{\Ext}{Ext}

\newcommand{\wt}{\widetilde}
\newcommand{\wh}{\widehat}
\newcommand{\ol}{\overline}
\renewcommand{\(}{\left(}
\renewcommand{\)}{\right)}
\renewcommand{\le}{\leqslant}

\newcommand{\eps}{\varepsilon}
\renewcommand{\l}{\lambda}

\def\cB{\mathcal{B}}

\def\cL{\mathcal{L}}
\def\cR{\mathcal{R}}


\def\fH{\mathfrak{H}}
\def\fS{\mathfrak{S}}

\def\bC{\mathbb{C}}
\def\bD{\mathbb{D}}
\def\bN{\mathbb{N}}
\def\bR{\mathbb{R}}
\def\bZ{\mathbb{Z}}

\begin{document}

\sloppy

\title[Completeness property of perturbations of normal and spectral BVP]
{Completeness property \\
of one-dimensional perturbations \\
of normal and spectral operators \\ 
generated by first order systems}

\author[A.~V.~Agibalova]{Anna~V.~Agibalova}

\address{%
Donetsk National University \\
Universitetskaya str. 24 \\
83055 Donetsk \\
Ukraine}

\email{agannette@rambler.ru}

\author[A.~A.~Lunyov]{Anton~A.~Lunyov}

\address{%
Facebook, Inc., MPK 12 \\
12 Hacker Way \\
Menlo Park, California, 94025 \\
United States of America}

\email{A.A.Lunyov@gmail.com}

\author[M.~M.~Malamud]{Mark~M.~Malamud}

\address{%
Peoples Friendship University of Russia (RUDN University) \\
6 Miklukho-Maklaya St. \\
Moscow, 117198, \\
Russian Federation}

\email{mmm@telenet.dn.ua}

\author[L.~L.~Oridoroga]{Leonid~L.~Oridoroga}

\address{%
Donetsk National University \\
Universitetskaya str. 24 \\
83055 Donetsk \\
Ukraine}

\email{vremenny-orid@mail.ru}

\subjclass{Primary 47E05; Secondary 34L10, 47B15}

\keywords{Systems of ordinary differential equations, normal operator,
completeness of root vectors, resolvent operator, rank one perturbation, Riesz
basis property}

\begin{abstract}
The paper is concerned with completeness property of rank one perturbations of
unperturbed operators generated by special boundary value problems (BVP) for the
following $2 \times 2$ system
\begin{equation} \label{eq:Ly.abstract}
 L y = -i B^{-1} y' + Q(x) y = \lambda y , \quad
 B = \begin{pmatrix} b_1 & 0 \\ 0 & b_2 \end{pmatrix}, \quad
 y = \begin{pmatrix} y_1 \\ y_2 \end{pmatrix},
\end{equation}
on a finite interval assuming that a potential matrix $Q$ is summable, and $b_1
b_2^{-1} \notin \bR$ (essentially non-Dirac type case). We assume that
unperturbed operator generated by a BVP belongs to one of the following three
subclasses of the class of spectral operators:

(a) normal operators;

(b) operators similar either to a normal or almost normal;

(c) operators that meet Riesz basis property with parentheses;

\noindent We show that in each of the three cases there exists (in general,
non-unique) operator generated by a quasi-periodic BVP and its certain rank-one
perturbations (in the resolvent sense) generated by special BVPs which are
complete while their adjoint are not.

In connection with the case (b) we investigate Riesz basis property of
quasi-periodic BVP under certain assumptions on a potential matrix $Q$. We also
find a simple formula for the rank of the resolvent difference for operators
corresponding to two BVPs for $n \times n$ system in terms of the coefficients
of boundary linear forms.
\end{abstract}

\maketitle{}

\tableofcontents

\section{Introduction} \label{sec:intro}
%
%
\textbf{1.1.} During the last two decades there appeared numerous papers devoted
to completeness and Riesz basis properties in $L^2([0,1]; \bC^{n})$ of boundary
value problems (BVP) for general first order system of ODE
\begin{equation} \label{eq:system.nxn}
 \cL y := \cL(B, Q)y := -i B^{-1} y' + Q(x)y = \l y,
 \quad y = \col(y_1,...,y_n).
\end{equation}
Here $B$ is a nonsingular diagonal $n\times n$ matrix with complex
entries, $B = \diag(b_1, b_2, \ldots, b_n) \in \bC^{n\times n}$, and
$Q(\cdot) =: (Q_{jk}(\cdot))_{j,k=1}^n \in L^1([0,1]; \bC^{n\times
n})$ is a potential matrix.

To obtain a BVP, equation~\eqref{eq:system.nxn} is subject to the
following boundary conditions (BC)
\begin{equation} \label{eq:BC.nxn}
 C y(0) + D y(1) = 0, \quad C= (c_{jk}), \ D = (d_{jk}) \in \bC^{n \times n}.
\end{equation}
We always impose the maximality condition $\rank(C \ D) = n$.

With system~\eqref{eq:system.nxn} one associates, in a natural way, the maximal
operator $L_{\max} := L_{\max}(B,Q)$ acting in $L^2([0,1]; \bC^n)$ on the domain
\begin{equation*}
 \dom(L_{\max}) = \{y \in AC([0,1]; \bC^n) : \cL y \in L^2\([0,1]; \bC^n\)\}.
\end{equation*}
Clearly, $\dom(L_{\max}) = W^{1,2}\([0,1]; \bC^n\)$ whenever $Q(\cdot) \in
L^2([0,1]; \bC^{n \times n})$. In this case the minimal operator $L_{\min} :=
L_{\min}(B,Q)$ is a restriction of $L_{\max}$ to
\begin{equation*}
 \dom(L_{\min}) = W^{1,2}_0([0,1]; \bC^n) :=
 \{y\in W^{1,2}([0,1]; \bC^n): y(0) = y(1) = 0\}.
\end{equation*}
Denote by $L_{C,D} := L_{C,D}(B,Q)$ the operator associated in $L^2([0,1]; \bC^n)$
with the BVP~\eqref{eq:system.nxn}--\eqref{eq:BC.nxn}. It is defined as the
restriction of $L_{\max}(B,Q)$ to the set of functions satisfying~\eqref{eq:BC.nxn}.

Apparently, the spectral problems~\eqref{eq:system.nxn}--\eqref{eq:BC.nxn}
have first been investigated by G.~D.~Birkhoff and R.~E.~Langer~\cite{BirLan23}.
Namely, they have extended certain previous results due to~Birkhoff and Tamarkin
on non-selfadjoint BVP for ODE to the case of BVP~\eqref{eq:system.nxn}--\eqref{eq:BC.nxn}.
More precisely, they introduced the concepts of \emph{regular and strictly
regular boundary conditions} and investigated the asymptotic behavior of
eigenvalues and eigenfunctions of the corresponding operator $L_{C,D}(B,Q)$
assuming that a potential matrix $Q(\cdot)$ is continuous. Moreover, they
proved \emph{a pointwise convergence result} on spectral decompositions of
the operator $L_{C,D}(B,Q)$ corresponding to the
BVP~\eqref{eq:system.nxn}--\eqref{eq:BC.nxn} with regular boundary conditions.

The completeness property of the root vectors system \emph{of general BVP} for
equation~\eqref{eq:system.nxn} has first been investigated in the recent
paper~\cite{MalOri12}. In this paper the concept of \emph{weakly regular} boundary
conditions~\eqref{eq:BC.nxn} for the system~\eqref{eq:system.nxn} was introduced
and the completeness of the root vectors for such type a BVP was proved
(see Theorem~\ref{th2.1_MO} in Appendix).

In the recent paper~\cite{LunMal15} it was established the Riesz basis property
with parentheses for system~\eqref{eq:system.nxn} subject to various classes of
boundary conditions with a potential $Q(\cdot) \in L^\infty([0,1]; \bC^{n\times n})$.

\textbf{1.2.} Going over to the case $n=2$ we consider the system
\begin{equation}\label{eq:system}
 -i B^{-1} y' + Q(x)y = \l y, \qquad y = \col(y_1,y_2), \qquad x\in[0,1],
\end{equation}
with nonsingular matrix $B$ and complex valued potential matrix $Q,$
\begin{equation}\label{eq:BQ}
	B = \diag(b_1, b_2), \quad\text{and}\quad
	Q = \begin{pmatrix} 0 & Q_{12} \\ Q_{21} & 0 \end{pmatrix} \in
	L^1\([0,1]; \bC^{2 \times 2}\).
\end{equation}
In this case it is more convenient to rewrite conditions~\eqref{eq:BC.nxn} as
\begin{equation}\label{eq:BC}
	U_j(y) := a_{j 1}y_1(0) + a_{j 2}y_2(0) + a_{j 3}y_1(1) + a_{j 4}y_2(1)= 0,
	\quad j \in \{1,2\},
\end{equation}
where the linear forms $\{U_j\}_{j=1}^2$ are assumed to be linearly
independent. We also write $L_{U_1, U_2}$ instead of $L_{C, D}$.

As opposed to general problem~\eqref{eq:system.nxn}--\eqref{eq:BC.nxn},
BVP~\eqref{eq:system}--\eqref{eq:BC} with $B = \diag(-1, 1)$ (Dirac system) has
been investigated in great detail. First we mention that completeness property
of irregular and even degenerate BVP ~\eqref{eq:system.nxn}--\eqref{eq:BC.nxn}
was investigated in~\cite{MalOri12},~\cite{LunMal13Dokl}. Besides, P.~Djakov and
B.~Mityagin~\cite{DjaMit12Equi} imposing certain smoothness condition on
$Q(\cdot)$ proved equiconvergence of the spectral decompositions for $2 \times
2$ Dirac equations subject to \emph{general regular boundary conditions}.
Moreover, the Riesz basis property for $2\times 2$ Dirac operators $L_{U_1, U_2}$
has been investigated in numerous papers (see~\cite{TroYam02, DjaMit10BariDir,
Bask11, DjaMit12UncDir, DjaMit12Crit, DjaMit13CritDir, Gub03, LunMal14Dokl,
LunMal16JMAA, SavShk14} and references therein, and discussion in
Remark~\ref{rem:Riesz_bas_prop_for_Dirac}).

\textbf{1.3.} In this paper considering the case of $n=2$ we always assume that
\begin{equation}\label{eq:condition_on_B}
	B = \diag(b_1, b_2) \quad\text{and}\quad b_1 b_2^{-1} \not \in \bR.
\end{equation}
To describe the main aim of the paper we introduce the following definition.
%
%
\begin{definition} \label{def:peculiar}
\begin{itemize}
\item[(i)] An operator $S$ with discrete spectrum in a Hilbert space $\fH$ is
called \textbf{complete} if the system of its root vectors is complete in $\fH$;
\item[(ii)] We call an operator $S$ \textbf{peculiarly complete} if $S$ is
complete while the adjoint operator $S^*$ is not and the span of its root
vectors has infinite codimension in $\fH$.
\end{itemize}
\end{definition}
%
%
\begin{definition} \label{def:peculiar_pair}
A pair of operators $\{T,\wt{S}\}$ will be called \textbf{peculiar} if:
(a)~$T$~is~normal;
(b)~$\wt{S}$ is peculiarly complete; and
(c)~the resolvent difference $(\wt{S}-\l)^{-1} - (T-\l)^{-1}$ is finite-dimensional.
\end{definition}
%
%
Our main purpose here is to describe all peculiar pairs of operators $\{T :=
L_{U_1, U_2}(B, Q), \wt{S} := L_{\wt{U}_1,\wt{U}_2}(B, Q)\}$ provided that $B$
satisfies condition~\eqref{eq:condition_on_B}. Surprisingly such pairs exist only
in the trivial case of zero potential $Q \equiv 0$. We also find explicit conditions
in terms of coefficient $a_{jk}$ of the forms~\eqref{eq:BC} ensuring that the
resolvent difference $(\wt{S} - \l)^{-1} - (T - \l)^{-1}$ is one-dimensional.

To state the main result we need one more definition.
%
%
\begin{definition} \label{def:bc.equiv}
We call a pair of BC $U_1(y) = U_2(y) = 0$ equivalent to a pair of BC $V_1(y) =
V_2(y) = 0$, if they can be transformed to each other by means of simplest linear
transforms $i_1: \binom{y_1}{y_2} \mapsto \binom{y_2}{y_1}$ and $i_2: y(x) \mapsto
y(1-x)$.
\end{definition}
%
%
With this definition our main result reads as follows.
%
%
\begin{theorem} \label{th:one.dim.perturb.normal}
Let $n=2$ and let $T := L_{U_1, U_2}(B, Q)$ and $\wt{S} := L_{\wt{U}_1,\wt{U}_2}
(B, Q)$. A pair of operators $\{T, \wt{S}\}$ is peculiar, i.e. $T$ is normal and
$\wt{S}$ is peculiarly complete, if and only if $Q \equiv 0$ and pairs of boundary
conditions $\{U_1,U_2\}$, $\{\wt{U}_1,\wt{U}_2\}$ are equivalent, respectively,
to pairs $\{V_1,V_2\}$ and $\{\wt{V}_1,\wt{V}_2\}$, given by
\begin{align}
\label{eq:U1.U2.d1.d2.intro}
	V_1(y) &= y_1(0) - d_1 y_1(1) = 0, \qquad
	V_2(y) = y_2(0) - d_2 y_2(1) = 0, \\
\label{eq:wt.U1.U2.d1.h1.intro}
	\wt{V}_1(y) &= y_1(0) - h_1 y_2(0) = 0, \qquad
	\wt{V}_2(y) = y_1(1) - h_2 y_2(0) = 0,
\end{align}
where $d_j, h_j \in \bC$,\ $|d_1| = |d_2| = 1$ and $h_1h_2 \ne 0$.

Moreover, for such a pair of operators $\{T, \wt{S}\}$ the resolvent difference
$(\wt{S}-\l)^{-1} - (T-\l)^{-1}$ is one-dimensional if and only if $h_1 = d_1 h_2$.
\end{theorem}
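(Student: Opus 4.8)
The plan is to prove the theorem in two directions, exploiting the fact (established in the preceding sections, and used repeatedly here) that for $Q \equiv 0$ the operators $L_{U_1,U_2}(B,0)$ can be analyzed completely explicitly via the fundamental matrix $\exp(i\lambda B x)$. First I would settle the \emph{necessity} part. Assuming $\{T,\wt S\}$ is peculiar, I would invoke the normality criterion for $T = L_{U_1,U_2}(B,Q)$: normal operators generated by BVP for \eqref{eq:system} have their root vectors forming an orthonormal basis, hence $T$ is in particular a spectral operator of scalar type, and its adjoint $T^*$ is complete. Since the resolvent difference $(\wt S - \lambda)^{-1} - (T-\lambda)^{-1}$ is finite-dimensional, $\wt S$ is a finite-dimensional perturbation (in the resolvent sense) of a normal operator. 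The peculiar completeness of $\wt S$ — completeness of $\wt S$ together with \emph{non}-completeness of $\wt S^*$ with infinite-codimensional root-vector span — is the sharp obstruction that must be converted into rigid algebraic constraints on the boundary forms. I expect the key input here is a result of the earlier sections to the effect that such a one-sided completeness defect can occur only when the characteristic determinant $\Delta(\lambda)$ of $\wt S$ is (up to exponential and polynomial factors) of a special degenerate form, and that in the essentially non-Dirac regime \eqref{eq:condition_on_B} this in turn forces $Q \equiv 0$; this is the step flagged in the introduction as the surprising phenomenon, so it is presumably the technical heart of the whole paper and I would cite it rather than reprove it.

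Granting $Q \equiv 0$, the problem becomes purely computational. I would write the characteristic determinant for $L_{U_1,U_2}(B,0)$ explicitly as a $2\times 2$ determinant in the entries $a_{jk}$ and $e^{i\lambda b_1}$, $e^{i\lambda b_2}$, and similarly for $L_{\wt U_1,\wt U_2}(B,0)$. Using Definition~\ref{def:bc.equiv} (the transforms $i_1,i_2$, which permute the components $b_1\leftrightarrow b_2$ and reflect $x\mapsto 1-x$) I would reduce the pair $\{U_1,U_2\}$ to a normal form and read off that $T$ is normal precisely when the BC decouple into the two scalar conditions \eqref{eq:U1.U2.d1.d2.intro} with unimodular $d_1,d_2$ — this matches the classical fact that a scalar first-order operator $-i b^{-1} y' $ with BC $y(0) = d\,y(1)$ is unitarily equivalent to a self-adjoint operator exactly when $|d| = 1$. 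For $\wt S$, I would similarly classify which BC give a peculiarly complete operator: the form \eqref{eq:wt.U1.U2.d1.h1.intro} (one component of the solution at $0$ tied to the other component at $0$ and at $1$) produces a characteristic determinant that vanishes along a single exponential family, so the eigenfunctions span only "half" of $L^2$, while the adjoint loses completeness — again up to the equivalence of Definition~\ref{def:bc.equiv}. The condition $h_1 h_2 \ne 0$ is exactly what keeps the BC non-degenerate (maximality $\rank(C\ D) = 2$) and genuinely coupling.

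The final "moreover" assertion — that the resolvent difference is one-dimensional iff $h_1 = d_1 h_2$ — I would derive from the general rank formula for resolvent differences promised in the abstract ("a simple formula for the rank of the resolvent difference for operators corresponding to two BVPs for $n\times n$ system in terms of the coefficients of boundary linear forms"). Concretely: both $T$ and $\wt S$ are restrictions of $L_{\max}(B,0)$, so $(\wt S-\lambda)^{-1} - (T-\lambda)^{-1}$ has rank equal to the codimension of $\dom T \cap \dom \wt S$ in $\dom T$ (equivalently in $\dom \wt S$), which by the rank formula equals the rank of an explicit $2\times 2$ matrix built from the coefficient rows of $\{V_1,V_2\}$ and $\{\wt V_1,\wt V_2\}$. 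Plugging in \eqref{eq:U1.U2.d1.d2.intro}–\eqref{eq:wt.U1.U2.d1.h1.intro}, this $2\times 2$ matrix drops rank from $2$ to $1$ exactly when a single determinant vanishes, and a direct calculation identifies that determinant (up to a nonzero factor) with $h_1 - d_1 h_2$. The main obstacle, as noted, is not this last linear-algebra step but the necessity direction — specifically, pinning down that peculiar completeness of a finite-rank resolvent perturbation of a normal $L_{U_1,U_2}(B,Q)$ is possible \emph{only} for $Q\equiv 0$; for that I would lean entirely on the structural completeness/Riesz-basis results of the earlier sections (the analogues of Theorem~\ref{th2.1_MO} and the results of \cite{LunMal15}) and on the explicit control of the characteristic determinant under perturbation.
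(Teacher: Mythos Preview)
Your necessity argument has the logic inverted, and this is a genuine gap. You propose to deduce $Q\equiv 0$ from the one-sided completeness defect of $\wt S$ via some putative result about its characteristic determinant; no such result is in the paper, and the actual mechanism is the opposite. It is the \emph{normality of $T$} that constrains $Q$: Proposition~\ref{prop:normal=const} (quoted from \cite{LunMal18}) says $L_{U_1,U_2}(B,Q)$ can be normal only when $Q\equiv 0$ or $Q$ is a specific nonzero constant matrix with $Q_{12}Q_{21}\neq 0$ everywhere. The constant alternative is then ruled out by \cite[Corollary~1.7]{AgiMalOri12}, which for entire $Q$ with nonvanishing endpoint values makes \emph{every} non-initial-value BVP and its adjoint complete --- so $\wt S^*$ would be complete too, contradicting peculiar completeness. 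Likewise, the classification of the BC for $\wt S$ is not done by inspecting $\Delta(\lambda)$: one observes that weakly $B$-regular BC would force both $\wt S$ and $\wt S^*$ complete (Theorem~\ref{th2.1_MO}), so the BC must be non-weakly-regular, and \cite[Lemma~2.7]{AgiMalOri12} then reduces them either to \eqref{eq:wt.U1.U2.d1.h1.intro} or to a pair containing $y_2(1)=0$; the latter is discarded via Proposition~\ref{prop:not.complete} (that operator is itself incomplete). Your description of \eqref{eq:wt.U1.U2.d1.h1.intro} as having eigenfunctions that span only half of $L^2$ is also backwards: it is $\wt S$ that is complete (Theorem~\ref{th61}) and $\wt S^*$ whose root vectors have infinite-codimensional span (Proposition~\ref{prop:not.complete}).

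Your handling of the rank-one condition via the general rank formula is correct and matches the paper's use of Proposition~\ref{prop:resolv.dif}: for BC \eqref{eq:U1.U2.d1.d2.intro} one computes $J_{42}=0$, $J_{14}=-d_2$, $J_{34}=d_1d_2$, and condition \eqref{eq:h1.h0.one.dim} becomes $h_1=d_1h_2$.
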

%
%
Emphasize that our interest in this problem has been influenced by a recent
remarkable result by A. Baranov and D. Yakubovich~\cite{BarYak15, BarYak16}, which
we reformulate for unbounded operators with account of Definition~\ref{def:peculiar}.
%
%
\begin{theorem}~\cite{BarYak15,BarYak16,Bar18} \label{th:BarYak}
For any normal operator $L_0$ in $\fH$ with simple point spectrum there exists
peculiarly complete operator $L$ 
such that the resolvent difference $(L-\l)^{-1} - (L_0-\l)^{-1}$ is one-dimensional.
%
%
\end{theorem}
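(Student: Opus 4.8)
The plan is to carry out, in the language of rank-one perturbations, the construction of \cite{BarYak15, BarYak16, Bar18}. First I would realize $L_0$ as a diagonal operator: let $\{e_n\}$ be an orthonormal eigenbasis of $\fH$ with $L_0 e_n = \l_n e_n$, the $\l_n$ pairwise distinct, and look for $L$ of the form $L = L_0 + \langle \cdot, b\rangle a$ (read in the resolvent sense when $L_0$ is unbounded, with $a,b$ in the scale space $\fH_{-1}$ of $L_0$), so that $L^* = L_0^* + \langle \cdot, a\rangle b$. Krein's formula then gives $(L-\l)^{-1}-(L_0-\l)^{-1} = -\,\Phi(\l)^{-1}\langle \cdot,(L_0^*-\ol\l)^{-1}b\rangle\,(L_0-\l)^{-1}a$ with perturbation determinant $\Phi(\l):=1+\langle(L_0-\l)^{-1}a,b\rangle$; hence the resolvent difference is one-dimensional as soon as $a\ne 0\ne b$, the point spectrum of $L$ outside $\sigma(L_0)$ is the zero set $\{\mu_k\}$ of $\Phi$, the root vectors of $L$ are the Cauchy kernels $(L_0-\mu_k)^{-1}a=\big(a_n/(\l_n-\mu_k)\big)_n$ (with Jordan chains at multiple zeros), and those of $L^*$ are the $(L_0^*-\ol{\mu_k})^{-1}b$. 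It therefore suffices to produce $a,b$ for which the first family is complete in $\fH$ while the closed span of the second has infinite codimension.

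Next I would reduce completeness of the root system of $L$ to a rigidity (``division'') property. A vector $h\in\fH$ is orthogonal to all $(L_0-\mu_k)^{-1}a$ exactly when the meromorphic function $g_h(\l):=\sum_n a_n\ol{h_n}/(\l_n-\l)$ vanishes at every $\mu_k$; since $g_h$ and $\Phi$ share their poles $\{\l_n\}$ with residues proportional by the factor $\ol{h_n}/\ol{b_n}$, and $g_h$ kills the zeros of $\Phi$, the quotient $g_h/\Phi$ extends holomorphically off the accumulation set of $\{\l_n\}$. Completeness of the root system of $L$ is then equivalent to: every such $g_h/\Phi$ vanishes identically. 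Whether this holds is a Liouville/Phragm\'en--Lindel\"of matter: running the maximum principle on an exhausting family of contours avoiding $\{\l_n\}$ on which $|\Phi|$ is bounded below, one forces $g_h/\Phi$ to be a polynomial $P$ whose degree is controlled by the growth of $g_h$, i.e.\ by the decay of $(a_n)$; tracking residues then gives $h_n=\ol{P(\l_n)}\,b_n$, and re-inserting this into ``$g_h$ vanishes on $\{\mu_k\}$'' forces $P\equiv 0$, i.e.\ $h=0$. Interchanging $a$ and $b$ gives the parallel criterion for $L^*$, now governed by $\wt g_h(\l)=\sum_n b_n\ol{h_n}/(\l_n-\l)$ and hence by the decay of $(b_n)$.

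The peculiar phenomenon is then produced by the asymmetry between $a$ and $b$: they are coupled only through the numbers $c_n:=a_n\ol{b_n}$ (the negatives of the residues of $\Phi$), so there is a full gauge freedom $a_n\mapsto t_n a_n$, $b_n\mapsto t_n^{-1}b_n$ which leaves $\Phi$, and hence $\{\mu_k\}$, unchanged. The plan is: fix first an auxiliary Hermite--Biehler-type entire function $\Omega$ with zero set $\{\mu_k\}$ interlacing $\{\l_n\}$ and a generating function $A$ of $\{\l_n\}$ so that $\Phi=\Omega/A$; this pins down the $c_n$. Then factor $c_n=a_n\ol{b_n}$ in a deliberately lopsided way: keep $(a_n)$ large and $\{\l_n\}$ arranged so the $L$-side rigidity above goes through unchanged (hence $L$ is complete), while forcing $(b_n)$ to decay so fast along an infinite subsequence of indices that the companion function $\wt g_h$ becomes too large for the Liouville step to close --- so that $\wt g_h/\Phi$ may be a nonzero polynomial of any degree, $h_n=\ol{P(\l_n)}\,a_n$ stays in $\fH$ for every such $P$, and these $h$ do annihilate all root vectors of $L^*$. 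This exhibits an infinite-dimensional space orthogonal to the root system of $L^*$; a genericity check on $\Omega$ (simple zeros, $\{\mu_k\}\cap\{\l_n\}=\emptyset$) rules out stray point masses at the $\l_n$ and spurious Jordan blocks, and one concludes that $L$ is peculiarly complete with one-dimensional resolvent difference from $L_0$.

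The main obstacle is carrying out the two preceding paragraphs for an \emph{arbitrary} simple point spectrum $\{\l_n\}$: in general there is no canonical product of $\{\l_n\}$ with controlled growth, so the division argument has to be set up inside a de Branges space $\mathcal{H}(E)$ adapted to $\{\l_n\}$, and one must simultaneously keep the $L$-side division problem rigid (to force completeness) and engineer an infinite-dimensional slack on the $L^*$-side --- all while respecting the admissibility constraint $a,b\in\fH_{-1}$. This balancing act, which exploits precisely the fact that a de Branges space and its conjugate companion $\mathcal{H}(E^*)$ may have completely different completeness properties, is the technical heart of \cite{BarYak15, BarYak16, Bar18}.
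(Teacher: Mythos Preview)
The paper does not contain a proof of this theorem: Theorem~\ref{th:BarYak} is stated with the citation \cite{BarYak15,BarYak16,Bar18} and immediately followed only by the remark that the selfadjoint case is from \cite{BarYak15,BarYak16} and the extension to normal $L_0$ is from \cite{Bar18}. No argument whatsoever is given in the paper; the result is used as a black box to motivate \textbf{Problem~1}. So there is no ``paper's own proof'' for you to be compared against.

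Your sketch is a reasonable outline of the Baranov--Yakubovich machinery (diagonal model, Krein's formula, Cauchy-kernel root vectors, the division/rigidity argument via $g_h/\Phi$, and the asymmetric gauge $a_n\mapsto t_n a_n$, $b_n\mapsto t_n^{-1}b_n$ that decouples completeness of $L$ from that of $L^*$), and you correctly identify that the hard work for arbitrary simple point spectrum lives in the de Branges space analysis of \cite{BarYak16,Bar18}. But none of this appears in the present paper, whose own contribution is instead to realize the phenomenon \emph{concretely} inside the extension class $\Ext\{L_{\min}(B,0),L_{\min}(B^*,0)\}$ via the explicit boundary conditions \eqref{eq:U1.U2.d1.d2.intro}--\eqref{eq:wt.U1.U2.d1.h1.intro}.
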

%
%
In fact, this result was proved in~\cite{BarYak15,BarYak16} only for $L_0 = L_0^*$
and was extended to the case of normal operators $L_0$ in a recent preprint by
A.~Baranov~\cite{Bar18}.

Note in this connection that the first (highly nontrivial) example of a peculiarly
complete operator $L$ (with selfadjoint $L_0$) was constructed by Hamburger
\cite{Ham51}. Later on Deckard, Foias and Pearcy~\cite{DecFoPea79} found a simpler
construction. However, in these examples the resolvent of the corresponding operator
$L$ is \emph{an infinite dimensional} perturbation of a selfadjoint compact operator
$(L_0 - \l)^{-1}$. Surprisingly, that in accordance with Theorem~\ref{th:BarYak}
one can find such examples among rank one perturbations.

Theorem~\ref{th:BarYak} substantially complements the classical Keldysh result
on completeness of weak perturbations of a selfadjoint finite order compact
operator (cf.~\cite{Kel51,Kel71,Shk16}). It is convenient to present its
"unbounded version".
%
%
\begin{theorem}~\cite[Theorem 5.10.1]{GohKre65} \label{Keldysh_th}
Let $L_0$ be a selfadjoint operator in $\fH$ with discrete spectrum and let $K$
be an $L_0$-compact operator such that $L_0^{-1} K L_0^{-1} \in \fS_p$ for some
$p \in (0,\infty)$. Then the operator $L = L_0 + K$ has discrete spectrum and is
complete. Moreover, the adjoint operator $L^*$ is also complete.
%
%
\end{theorem}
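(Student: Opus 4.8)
The plan is to run the classical resolvent/entire-function completeness scheme, with the essential point that the order control must come \emph{only} from the relative condition $L_0^{-1}KL_0^{-1}\in\fS_p$, and not from any growth of the eigenvalues of $L_0$, which may be arbitrarily sparse. Since $K$ is $L_0$-compact, $K(L_0-\lambda)^{-1}$ is compact for every $\lambda\in\rho(L_0)$, so $I+K(L_0-\lambda)^{-1}$ is Fredholm of index $0$ and
\[(L-\lambda)^{-1}=(L_0-\lambda)^{-1}\bigl(I+K(L_0-\lambda)^{-1}\bigr)^{-1}\]
is a finite-meromorphic operator function. This already gives that $L$ has discrete spectrum, with poles of the resolvent exactly at the eigenvalues and residue parts supported on the corresponding root subspaces. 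Fix a nonzero $f\in\fH$ orthogonal to every root vector of $L$; then for each $g\in\fH$ all principal parts of the scalar function $\Phi_g(\lambda):=\bigl((L-\lambda)^{-1}g,f\bigr)$ vanish, so $\Phi_g$ is entire, and completeness of $L$ is equivalent to $\Phi_g\equiv0$ for all $g$.

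Next I would extract finite order from the hypothesis. The only weighting of $K$ that the hypothesis controls is the \emph{full} resolvent sandwich: writing $G:=L_0^{-1}KL_0^{-1}\in\fS_p$ and using $(L_0-\lambda)^{-1}L_0=I+\lambda(L_0-\lambda)^{-1}$,
\[(L_0-\lambda)^{-1}K(L_0-\lambda)^{-1}=\bigl(I+\lambda(L_0-\lambda)^{-1}\bigr)\,G\,\bigl(I+\lambda(L_0-\lambda)^{-1}\bigr)\in\fS_p,\]
with $\fS_p$-norm $\lesssim\bigl(1+|\lambda|/\dist(\lambda,\sigma(L_0))\bigr)^2\|G\|_{\fS_p}$. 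From this one builds the perturbation determinant $\Delta(\lambda):=\det{}_{\lceil p\rceil}\bigl(I+K(L_0-\lambda)^{-1}\bigr)$, regularized through the sandwiched operator above (whose nonzero spectrum coincides with that of $K(L_0-\lambda)^{-1}$): it is entire, vanishes precisely at the eigenvalues of $L$, and the $\fS_p$-bound makes it of finite order $\le p$ off $\sigma(L_0)$. Combining $\|(L-\lambda)^{-1}\|\le\|(L_0-\lambda)^{-1}\|\,\|(I+K(L_0-\lambda)^{-1})^{-1}\|$ with the Cramer-type expression for $(I+K(L_0-\lambda)^{-1})^{-1}$ through $\Delta$ shows $\Delta\cdot\Phi_g$ is entire of finite order, hence $\Phi_g$ itself is of finite order $\le p$.

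Then selfadjointness enters through Phragmén--Lindel\"of. Since $\sigma(L_0)\subset\bR$, one has $\|(L_0-\lambda)^{-1}\|=\dist(\lambda,\bR)^{-1}$, so along any ray $\arg\lambda=\theta$ with $\theta\neq0,\pi$ both $\|(L_0-\lambda)^{-1}\|\to0$ and $\|K(L_0-\lambda)^{-1}\|\to0$, whence $\|(L-\lambda)^{-1}\|\to0$ and $\Phi_g(\lambda)\to0$ on every non-real ray. Thus $\Phi_g$ is entire of finite order $\le p$ and decays along all rays filling the open upper and lower half-planes. Covering each half-plane by finitely many closed sectors of opening $<\pi/p$ and applying Phragmén--Lindel\"of on each (boundedness on the two bounding rays, order $<\pi/\text{opening}$), $\Phi_g$ is bounded in each half-plane, hence on $\bC$; by Liouville it is constant, and the decay forces $\Phi_g\equiv0$. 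As $g$ is arbitrary, $f=0$, proving completeness of $L$. For the adjoint, $L^*=L_0+K^*$ and $L_0^{-1}K^*L_0^{-1}=(L_0^{-1}KL_0^{-1})^*\in\fS_p$ because $L_0=L_0^*$, so the identical argument yields completeness of $L^*$.

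The main obstacle is the order estimate: the hypothesis controls only the doubly weighted $L_0^{-1}KL_0^{-1}$, whereas the eigenvalue equation naturally produces the singly weighted $K(L_0-\lambda)^{-1}$, whose Schatten class is \emph{not} governed by the hypothesis. One must therefore route everything through the full sandwich and a regularized determinant, tracking the growth factor $(1+|\lambda|/\dist(\lambda,\bR))^2$ uniformly so that the sector opening $<\pi/p$ is genuinely available. That $L_0$ itself need lie in no Schatten class — so that the classical Keldysh theorem for finite-order compact operators does not apply directly — is precisely what makes the relative formulation, and this more delicate argument, essential.
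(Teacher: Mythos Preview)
The paper does not prove this theorem; it is quoted from Gohberg--Krein \cite[Theorem 5.10.1]{GohKre65} as background for the discussion around Theorems~\ref{th:BarYak} and~\ref{Keldysh_th}, so there is no paper proof to compare against. Your overall architecture --- meromorphic resolvent, entire scalar functions $\Phi_g$, finite order via a perturbation determinant, then Phragm\'en--Lindel\"of on non-real sectors using $\sigma(L_0)\subset\bR$, and the adjoint case via $(L_0^{-1}KL_0^{-1})^*\in\fS_p$ --- is precisely the Keldysh scheme and is the right skeleton.

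There is, however, a genuine gap in the order estimate. You claim that the nonzero spectrum of the doubly sandwiched operator $(L_0-\lambda)^{-1}K(L_0-\lambda)^{-1}$ coincides with that of $K(L_0-\lambda)^{-1}$, and propose to define the regularized determinant ``through'' the former. This is false: for bounded $A,B$ the nonzero spectrum of $AB$ coincides with that of $BA$, not with that of $BAB$. Concretely, take $L_0=\diag(1,2,3,\dots)$ on $\ell^2$ and $K=(\cdot,e_1)e_1$; then $K(L_0-\lambda)^{-1}$ has the single nonzero eigenvalue $(1-\lambda)^{-1}$, while $(L_0-\lambda)^{-1}K(L_0-\lambda)^{-1}$ has $(1-\lambda)^{-2}$. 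Hence $\det_{\lceil p\rceil}\bigl(I+(L_0-\lambda)^{-1}K(L_0-\lambda)^{-1}\bigr)$ does \emph{not} vanish at $\sigma(L)$, and you have not shown that $K(L_0-\lambda)^{-1}$ itself lies in any $\fS_q$, so $\det_{\lceil p\rceil}\bigl(I+K(L_0-\lambda)^{-1}\bigr)$ is not even defined. Without a legitimate determinant the Carleman--Cramer bound on $\|(I+K(L_0-\lambda)^{-1})^{-1}\|$ is unavailable, and the finite-order claim for $\Phi_g$ is unproved. To repair this you must either argue that the hypothesis forces $K(L_0-\lambda)^{-1}\in\fS_q$ for some $q$ (not obvious, and likely false without extra assumptions on $L_0^{-1}$), or rebuild the growth estimate so that only the doubly weighted operator enters --- for example by restricting to $g,f\in\dom(L_0)$ and absorbing both resolvent factors, or by passing to the bounded picture $A=L_0^{-1}$ and a suitable pencil. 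Either route is real additional work; as written, the crucial step does not go through.
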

%
%
Note, that under the assumptions of Theorem~\ref{Keldysh_th} we have $\dom L =
\dom L_0$, meaning that $L$ is an additive perturbation of $L_0$. In applications
to BVPs representation $L = L_0 + K$ of a differential operator $L$ means that
($L_0$-compact) perturbation $K$ can change coefficients of low order terms of a
differential expression $L_0$ while boundary conditions remain unchanged. On the
other hand, under the conditions of Theorem~\ref{th:BarYak} an operator $L$ is a
singular (=non-additive) perturbation of $L_0$, in general, i.e. $\dom L \ne \dom
L_0$.

To describe the area of applicability of Theorems~\ref{Keldysh_th} and
\ref{th:BarYak} to BVPs let us consider the following simple example.
%
%
\begin{example}\label{Ex_S-L_oper_Intro}
Let $L_0$ be the Dirichlet realization of $-d^2/dx^2$ in $L^2[0,1]$, i.e.
\begin{equation}
	\dom L_0= \dom D^2_0=\{f\in W^{2,2}[0,1]:\ f(0)=f(1) = 0\},
\end{equation}
and $K: f\to q f$ where $q$ is complex valued, $q\in L^2[0,1]$. Then the Keldysh
theorem ensures completeness of $L=L_0+ K = D^2_0 + q$ in $L^2[0,1]$.

At the same time, one could not reach effect described in Theorem~\ref{th:BarYak}
by means of changing boundary conditions: each BVP for $-d^2/dx^2 + q$ with
non-degenerate BC is complete in $L^2[0,1]$ due to~\cite[Theorem 1.3.1]{Mar86}.
\end{example}
%
%
\noindent Similar effect for Dirac operator with $Q=0$ is discussed in
Example~\ref{ex:Dirac_with_Q=0}.

To treat these examples in general framework of BVPs we first recall definition
of a dual pair of operators and its proper extensions.
%
%
\begin{definition}
\begin{itemize}
\item[(i)] A pair $\{S_1,S_2\}$ of closed densely defined operators in $\fH$ is
called a dual pair of operators if $S_1 \subset S_2^* \ (\Longleftrightarrow
S_2\subset S_1^*)$.
\item[(ii)] An operator $T$ is called a proper extension of the dual pair $\{S_1,
S_2\}$ and is put in the class $\Ext\{S_1,S_2\}$ if $S_1 \subset T \subset S_2^*$.
\end{itemize}
\end{definition}
%
%
In connection with Theorem~\ref{th:BarYak} the following problem naturally
arises.

\textbf{Problem 1.} Given a dual pair of operators $\{S_1,S_2\}$ find all
peculiar pairs of proper extensions $T, \wt{S} \in \Ext \{S_1,S_2\}$ (i.e. such
operators that $T$ is normal and $\wt{S}$ is peculiarly complete) for which the
resolvent difference $(T - \l)^{-1} - (\wt{S} - \l)^{-1}$ is one-dimensional.

Note that in comparison with the assumptions of Theorem~\ref{th:BarYak} we restrict
the class of perturbations $\wt{S}$ by the class $\Ext \{S_1,S_2\}$ assuming that
it contains a normal extension $T$. Example 1 demonstrates significance of this
restriction. Namely, \textbf{Problem 1} has negative solution for a dual pair
$\{S, S\}$, where $S = D^2_{\min}$, $\dom D^2_{\min} = W^{2,2}_0[0,1]$, is the
minimal symmetric operator generated by the expression $-d^2/dx^2$. At the same
time, in accordance with Theorem~\ref{th:BarYak} proper selfadjoint extension $T =
D_0^2$ of $S$, where $D_0^2$ is the Dirichlet realization of $-d^2/dx^2$ in
$L^2[0,1]$, has rank one peculiar perturbation $\wt{S}$, which necessarily is not
a proper extension of $S$.

On the other hand, Theorem~\ref{th:one.dim.perturb.normal} shows that
\textbf{Problem 1} \emph{has an affirmative solution for the dual pair
$\{L_{\min}(B,0), L_{\min}(B^*,0)\}$}. Note in this connection that, in accordance
with Proposition~\ref{prop:normal=const}, a normal extension of a dual pair
$\{L_{\min}(B,Q), L_{\min}(B^*,Q)\}$ exists if and only if $Q=\const$.

The paper is organized as follows. In Section~\ref{sec:gen.prop} we find explicit
formula for the rank of the resolvent difference of arbitrary operators $L_{C,D}
(B,Q)$ and $L_{\wt{C},\wt{D}}(B,Q)$ in general $n \times n$ case. Namely, we show
that it is equal to $\rank \begin{pmatrix} C & D \\ \wt{C} & \wt{D}
\end{pmatrix} - n$. We also refine this formula in the case of $n=2$ and special
boundary conditions~\eqref{eq:wt.U1.U2.d1.h1.intro} for one of the operators.

In Section 3 we investigate Riesz basis property of operators $L_{V_1,V_2}
(B,Q)$ in the case of quasi-periodic boundary conditions \eqref{eq:U1.U2.d1.d2.intro}
and under certain assumptions on $Q$. In particular, we indicate conditions on
$Q$ ensuring similarity of such an operator either to a normal or to almost
normal operator.

In Section~\ref{sec:rank.one} we prove our main results on peculiar completeness
of one dimensional perturbations of operators $L_{V_1,V_2}(B,Q)$ with BC
\eqref{eq:U1.U2.d1.d2.intro}. In particular, we prove here Theorem
\ref{th:one.dim.perturb.normal}. Note that Theorem~\ref{th:one.dim.perturb.normal},
makes it reasonable a discussion of two other problems: \textbf{Problem 2}
and \textbf{Problem 3}, weaker versions of \textbf{Problem 1}. Namely, we
replace in formulation of \textbf{Problem 1} a normality of $T$ by one of its
weaker properties: similarity to a normal or almost normal operator, or just to
a property of $T$ to have the Riesz basis property with parentheses.

We show in Theorems~\ref{th:one.dim.perturb.similar_to_normal},
\ref{th:one.dim.perturb.riesz} that in opposite to \textbf{Problem 1}, both
\textbf{Problems 2 and 3} have an affirmative solution for a wide class of
potential matrices $Q$. Moreover, we discuss here {\bf Problem 1} for Dirac
operator with a non-trivial selfadjoint $2\times 2$ potential matrix $Q = Q^*$
and show that for a wide class of BVP the corresponding operator is complete
only simultaneously with its adjoint (see Example~\ref{ex:Dirac_with_Q=0}).

\textbf{Notation.} Let $T$ be a closed densely defined operator in a Hilbert space
$\fH$; $\sigma(T)$ and $\rho(T)=\bC \setminus \sigma(T)$ denote the spectrum and
resolvent set of the operator $T$, respectively; $\fS_p(\fH)$, $p\in [1,\infty]$,
denote the Neumann-Schatten ideal of the algebra $\cB(\fH)$ of bounded operators.
$\bD_r(z_0) := \{z \in \bC: |z - z_0| < r\}$ denotes the disc of the radius $r$
centered at $z_0$; $\bD_r := \bD_r(0)$.
%
%
\section{Resolvent difference properties of the operators $L_{C,D}(B,Q)$}
\label{sec:gen.prop}
%
\subsection{Formula for the rank of the resolvent difference}
%
%
In this subsection we consider operators $L_{C,D} := L_{C,D}(B,Q)$ associated with
BPV~\eqref{eq:system.nxn}--\eqref{eq:BC.nxn} in general $n \times n$ case. We will
find explicit formula for the rank of the resolvent difference of any two such
operators. Recall that for a bounded operator $A$ acting in a Hilbert space $\fH$
its rank is a dimension of its range, $\rank A := \dim (\ran A)$.

Let $\l \in \bC$ and $\Phi(\cdot, \l) \in AC\([0,1]; \bC^{n \times n}\)$ be a
fundamental matrix of the system~\eqref{eq:system.nxn}, i.e.
\begin{align} \label{eq:Phi.l.def}
 -iB^{-1} \Phi'(x, \l) + Q(x) \Phi(x, \l) = \l \Phi(x, \l),
 \ \ \text{for a.e.}\ x \in [0,1], \quad \Phi(0,\l) = I_n.
\end{align}
It is well-known that $\Phi(x, \l)$ is nonsingular for all $x \in [0,1]$ and
thus, $\Phi^{-1}(\cdot, \l) \in AC\([0,1]; \bC^{n \times n}\)$.

In what follows we denote by $R_{C,D}(\l) := (L_{C,D} - \l)^{-1}$ the resolvent
of the operator $L_{C,D}$ associated to the BVP~\eqref{eq:system.nxn}--\eqref{eq:BC.nxn}.
First we recall a simple lemma from~\cite{LunMal14}.
%
%
\begin{lemma} \label{lem:resolv.gen}
\cite[Corollary~4.2]{LunMal14}
Let $\l \in \rho\(L_{C,D}\)$. Then
\begin{equation} \label{eq:resolv.formula}
 \(R_{C,D}(\l)f\)(x)
 = (K_{\l} f)(x) - \Phi(x,\l) M_{C,D}(\l) (K_{\l}f)(1),
\end{equation}
where
\begin{align}
 \label{eq:MU.def}
 M_{C,D}(\l) &:= (C+D\Phi(1,\l))^{-1} D, \\
 \label{eq:Klf.def}
 (K_{\l}f)(x) &:= \Phi(x,\l) \int_0^x \Phi^{-1}(t,\l) i B f(t) dt.
\end{align}
\end{lemma}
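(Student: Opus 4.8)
The plan is to obtain the formula by solving the inhomogeneous boundary value problem directly, via variation of parameters. Fix $\lambda \in \rho(L_{C,D})$ and $f \in L^2([0,1]; \bC^n)$, and set $y := R_{C,D}(\lambda) f$. By definition of the resolvent, $y$ is the unique element of $\dom(L_{C,D})$ satisfying $(L_{\max} - \lambda) y = f$ together with the boundary condition $C y(0) + D y(1) = 0$. First I would rewrite the equation $-iB^{-1} y' + Q(x) y - \lambda y = f$ in the standard first-order form
\[
 y'(x) = iB(\lambda - Q(x)) y(x) + iB f(x),
\]
and note that, by its defining relation \eqref{eq:Phi.l.def}, the fundamental matrix satisfies exactly $\Phi'(x,\lambda) = iB(\lambda - Q(x)) \Phi(x,\lambda)$ with $\Phi(0,\lambda) = I_n$; thus $\Phi(\cdot,\lambda)$ is a fundamental system for the homogeneous version of the displayed equation.

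Next I would write down the general solution. The homogeneous solutions are $\Phi(\cdot,\lambda) c$ with $c \in \bC^n$ constant, and a particular solution is produced by the ansatz $y_p = \Phi(\cdot,\lambda) u(\cdot)$: substituting and cancelling the homogeneous part reduces the equation to $\Phi(x,\lambda) u'(x) = iB f(x)$, so that $u(x) = \int_0^x \Phi^{-1}(t,\lambda) iB f(t)\,dt$ and hence $y_p = K_\lambda f$ in the notation of \eqref{eq:Klf.def}. The nonsingularity of $\Phi(x,\lambda)$ recorded just after \eqref{eq:Phi.l.def}, which also gives $\Phi^{-1}(\cdot,\lambda) \in AC$, legitimizes this step. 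Consequently every solution of $(L_{\max} - \lambda) y = f$ has the form $y(x) = \Phi(x,\lambda) c + (K_\lambda f)(x)$.

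It then remains to determine $c$ from the boundary condition. Since $\Phi(0,\lambda) = I_n$ and $(K_\lambda f)(0) = 0$, one has $y(0) = c$, while $y(1) = \Phi(1,\lambda) c + (K_\lambda f)(1)$. Inserting these into $C y(0) + D y(1) = 0$ yields the linear system
\[
 (C + D\Phi(1,\lambda)) c = -D (K_\lambda f)(1).
\]
At this point I would invoke the standard characterization that $\lambda \in \rho(L_{C,D})$ precisely when the characteristic matrix $C + D\Phi(1,\lambda)$ is invertible; this is exactly what makes $M_{C,D}(\lambda) = (C + D\Phi(1,\lambda))^{-1} D$ well defined and the solution $c = -M_{C,D}(\lambda)(K_\lambda f)(1)$ unique. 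Substituting this $c$ back into $y(x) = \Phi(x,\lambda) c + (K_\lambda f)(x)$ gives the asserted identity \eqref{eq:resolv.formula}. I expect the only genuine obstacle to be the justification that $C + D\Phi(1,\lambda)$ is invertible on the resolvent set, together with verifying that the constructed $y$ indeed belongs to $\dom(L_{C,D})$, i.e. that it is absolutely continuous with $\cL y \in L^2$; both are routine consequences of the regularity of $\Phi(\cdot,\lambda)$ and the maximality condition $\rank(C\ D) = n$.
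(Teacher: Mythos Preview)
Your argument is correct: the variation-of-parameters derivation you give is the standard route to resolvent formulas of this type, and all the steps (rewriting the equation, identifying $\Phi(\cdot,\lambda)$ as a fundamental system, constructing $K_\lambda f$ as a particular solution, and solving for $c$ via the boundary condition) are carried out properly. Note that the paper itself does not prove this lemma at all; it merely quotes it from \cite[Corollary~4.2]{LunMal14}, so there is no in-paper proof to compare against, but your derivation is exactly the kind of argument one would expect to find behind the cited result.
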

%
%
Alongside the operator $L_{C,D}$ we consider the operator $L_{\wt{C},\wt{D}} :=
L_{\wt{C},\wt{D}}(B,Q)$ associated to equation~\eqref{eq:system.nxn} subject to
the boundary conditions
\begin{equation} \label{eq:wtBC.nxn}
 \wt{C} y(0) + \wt{D} y(1) = 0, \quad \wt{C}, \wt{D} \in \bC^{n \times n},
 \quad \rank(\wt{C} \ \ \wt{D}) = n.
\end{equation}
The following formula for the rank of the resolvent difference is immediately
implied by Lemma~\ref{lem:resolv.gen}.
%
%
\begin{lemma} \label{lem:rank.R-wtR}
Let $\l \in \rho(L_{C,D}) \cap \rho(L_{\wt{C},\wt{D}})$. Then
\begin{equation} \label{eq:rankR=rankM}
 \rank \bigl(R_{\wt{C},\wt{D}}(\l) - R_{C,D}(\l)\bigr) = \rank \widehat{M}(\l),
\end{equation}
where
\begin{equation} \label{eq:whM.def}
 \widehat{M}(\l) := M_{C,D}(\l) - M_{\wt{C},\wt{D}}(\l).
\end{equation}
Moreover, if common rank in~\eqref{eq:rankR=rankM} is equal to 1 then
$\widehat{M}(\l)$ admits representation
\begin{equation} \label{eq:whM=a.b}
 \widehat{M}(\l) = \alpha(\l) \cdot \beta(\l)^*
 = \(\alpha_j(\l) \ol{\beta_k(\l)}\)_{j,k=1}^n,
\end{equation}
for certain vector functions $\alpha, \beta : \bC \rightarrow \bC^n$, and
for any $f \in L^2\([0,1]; \bC^n\)$ we have
\begin{equation} \label{eq:R-wtR=a.b}
 \bigl(R_{\wt{C},\wt{D}}(\l) - R_{C,D}(\l)\bigr) f =
 \bigl(f, \Psi^*(\cdot, \l) \beta(\l) \bigr)_{L^2\([0,1]; \bC^n\)} \cdot
 \Phi(\cdot, \l) \alpha(\l),
\end{equation}
where
\begin{equation} \label{eq:Psi.def}
 \Psi(\cdot,\l) := i \Phi(1, \l) \Phi^{-1}(\cdot, \l) B.
\end{equation}
\end{lemma}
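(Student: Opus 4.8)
The plan is to obtain all three assertions as direct consequences of the resolvent representation in Lemma~\ref{lem:resolv.gen}. First I would subtract the formula \eqref{eq:resolv.formula} written for $L_{C,D}$ from the one written for $L_{\wt{C},\wt{D}}$: since the ``free'' term $(K_\l f)(x)$ does not involve the boundary conditions it cancels, leaving
\[
 \bigl(R_{\wt{C},\wt{D}}(\l) f\bigr)(x) - \bigl(R_{C,D}(\l) f\bigr)(x)
 = \Phi(x,\l)\, \widehat{M}(\l)\, (K_\l f)(1),
\]
with $\widehat{M}(\l)$ as in \eqref{eq:whM.def}. Thus the resolvent difference factors through $\bC^n$ as $A_\l \circ \widehat{M}(\l) \circ P_\l$, where $P_\l\colon f \mapsto (K_\l f)(1)$ is a bounded operator $L^2([0,1];\bC^n) \to \bC^n$ and $A_\l\colon v \mapsto \Phi(\cdot,\l) v$ maps $\bC^n$ into $L^2([0,1];\bC^n)$.

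The next step is to check that $P_\l$ is onto and $A_\l$ is one-to-one. Surjectivity of $P_\l$ follows since, given $v \in \bC^n$, the function $f(t) := (iB)^{-1}\Phi(t,\l)\Phi(1,\l)^{-1} v$ is absolutely continuous (hence in $L^2$) by nonsingularity of $\Phi(\cdot,\l)$ and $B$, and a direct computation using \eqref{eq:Klf.def} gives $\Phi^{-1}(t,\l)iBf(t) = \Phi(1,\l)^{-1}v$, whence $(K_\l f)(1) = v$. Injectivity of $A_\l$ is immediate from $\Phi(0,\l) = I_n$: if $\Phi(\cdot,\l)v \equiv 0$ then $v = \Phi(0,\l) v = 0$. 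Consequently $\ran\bigl(R_{\wt{C},\wt{D}}(\l) - R_{C,D}(\l)\bigr) = A_\l\bigl(\ran \widehat{M}(\l)\bigr)$, which has dimension $\rank \widehat{M}(\l)$ because $A_\l$ is linear and injective; this is \eqref{eq:rankR=rankM}.

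Finally, when this common rank equals $1$, I would factor the matrix as $\widehat{M}(\l) = \alpha(\l)\beta(\l)^*$ with nonzero $\alpha(\l), \beta(\l) \in \bC^n$, which is exactly \eqref{eq:whM=a.b}. Substituting into the factored formula above, the scalar $\beta(\l)^* (K_\l f)(1) = \beta(\l)^*\Phi(1,\l)\int_0^1 \Phi^{-1}(t,\l) i B f(t)\, dt$ is rewritten, by moving the constant row vector under the integral sign and transposing, as $\bigl(f, \Psi^*(\cdot,\l)\beta(\l)\bigr)_{L^2([0,1];\bC^n)}$ with $\Psi$ given by \eqref{eq:Psi.def}; here one only uses $(iB)^* = -iB^*$ together with $\bigl(i\Phi(1,\l)\Phi^{-1}(t,\l) B\bigr)^* = -i B^* \bigl(\Phi^{-1}(t,\l)\bigr)^* \Phi(1,\l)^*$. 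This yields \eqref{eq:R-wtR=a.b}. None of these steps presents a genuine obstacle; the only points requiring care are the bookkeeping of adjoints and complex conjugates in the last step, so that the inner product comes out precisely with $\Psi^*(\cdot,\l)\beta(\l)$, and making the surjectivity of $P_\l$ and injectivity of $A_\l$ explicit enough that the rank identity \eqref{eq:rankR=rankM} is rigorous rather than merely formal.
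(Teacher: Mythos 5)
Your proposal is correct and follows essentially the same route as the paper: subtract the two instances of formula \eqref{eq:resolv.formula}, observe that $f \mapsto (K_\l f)(1)$ is onto $\bC^n$ (via the same choice $f = \Psi^{-1}(\cdot,\l)v$) while $v \mapsto \Phi(\cdot,\l)v$ is injective, and then in the rank-one case factor $\widehat{M}(\l) = \alpha(\l)\beta(\l)^*$ and move $\beta(\l)^*$ under the integral to produce the inner product with $\Psi^*(\cdot,\l)\beta(\l)$. The only difference is cosmetic: you spell out the surjectivity/injectivity bookkeeping slightly more explicitly than the paper does.
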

%
%
\begin{proof}
\textbf{(i)} It follows from Lemma~\ref{lem:resolv.gen}
(formula~\eqref{eq:resolv.formula}) that
\begin{equation}\label{eq:Rl-wt.Rl}
 \bigl(R_{\wt{C},\wt{D}}(\l)f - R_{C,D}(\l)f\bigr)(x) =
 \Phi(x, \l) \bigl(M_{C,D}(\l) - M_{\wt{C},\wt{D}}(\l)\bigr)
 \bigl[(K_{\l}f)(1)\bigr],
\end{equation}
for any $f \in \fH := L^2\([0,1]; \bC^n\)$. It easily follows from definition of
$K_{\l}$ (formula~\eqref{eq:Klf.def}) that
\begin{equation} \label{eq:ran.Kl}
 \{(K_{\l}f)(1) : f \in \fH \} = \bC^n.
\end{equation}
Namely, for $u \in \bC^n$, $(K_{\l}f)(1) = u$, if we set $f(x) = \Psi^{-1}(x, \l) u$.
Since $\Phi(\cdot, \l), \Phi^{-1}(\cdot, \l) \in AC\([0,1]; \bC^{n \times n}\)$,
formula~\eqref{eq:rankR=rankM} immediately follows from~\eqref{eq:Rl-wt.Rl},
\eqref{eq:ran.Kl} and~\eqref{eq:whM.def}.

\textbf{(ii)} If common rank in~\eqref{eq:rankR=rankM} is equal to 1 then
$\widehat{M}(\l)$ has rank 1 and thus admits representation~\eqref{eq:whM=a.b}.
It follows now from~\eqref{eq:Rl-wt.Rl} and definition of $K_{\l}$ and
$\Psi(\cdot, \l)$ (formulas~\eqref{eq:Klf.def} and~\eqref{eq:Psi.def})
that for any $f \in \fH$
\begin{align*}
 \bigl(R_{\wt{C},\wt{D}}(\l) - R_{C,D}(\l)\bigr) f
 &= \Phi(\cdot, \l) \alpha(\l) \cdot \beta(\l)^*
 \cdot \int_0^1 \Psi(t,\l) f(t) d \\
 &= \Phi(\cdot, \l) \alpha(\l) \cdot \int_0^1
 \langle f(t), \Psi^*(t, \l) \beta(\l) \rangle_{\bC^n} dt \\
 &= \bigl( f, \Psi^*(\cdot, \l) \beta(\l) \bigr)_{\fH} \cdot
 \Phi(\cdot, \l) \alpha(\l),
\end{align*}
which finishes the proof.
\end{proof}
The following result gives explicit formula for the rank of the resolvent
difference of operators $L_{\wt{C},\wt{D}}$ and $L_{C,D}$ in terms of marices
$C,D,\wt{C},\wt{D}$.
%
%
\begin{proposition} \label{prop:rankR=rankCD}
Let $\l \in \rho(L_{C,D}) \cap \rho(L_{\wt{C},\wt{D}})$. Then
\begin{equation} \label{eq:rankR=rankCD}
 \rank \(R_{\wt{C},\wt{D}}(\l) - R_{C,D}(\l)\) =
 \rank\begin{pmatrix}C & D \\ \wt{C} & \wt{D}\end{pmatrix} - n.
\end{equation}
\end{proposition}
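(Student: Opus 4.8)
The plan is to reduce the computation of $\rank\bigl(R_{\wt C,\wt D}(\l)-R_{C,D}(\l)\bigr)$ to a purely linear-algebraic statement about the four matrices $C,D,\wt C,\wt D$, using Lemma~\ref{lem:rank.R-wtR}. By that lemma the rank in question equals $\rank\widehat M(\l)=\rank\bigl(M_{C,D}(\l)-M_{\wt C,\wt D}(\l)\bigr)$, where $M_{C,D}(\l)=(C+D\Phi(1,\l))^{-1}D$ and similarly for the tilded data. So everything comes down to computing $\rank\bigl((C+D\Phi)^{-1}D-(\wt C+\wt D\Phi)^{-1}\wt D\bigr)$ with $\Phi:=\Phi(1,\l)$, and showing it equals $\rank\begin{pmatrix}C&D\\\wt C&\wt D\end{pmatrix}-n$.

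First I would set $E:=C+D\Phi$ and $\wt E:=\wt C+\wt D\Phi$; these are invertible because $\l\in\rho(L_{C,D})\cap\rho(L_{\wt C,\wt D})$ (the eigenvalue condition for $L_{C,D}$ is exactly $\det(C+D\Phi(1,\l))=0$). Then
\[
 \widehat M(\l)=E^{-1}D-\wt E^{-1}\wt D=E^{-1}\bigl(D-E\wt E^{-1}\wt D\bigr)
 =E^{-1}\bigl(D\wt E-E\wt D\bigr)\wt E^{-1}.
\]
Since $E$ and $\wt E$ are invertible, $\rank\widehat M(\l)=\rank\bigl(D\wt E-E\wt D\bigr)$. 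Now expand: $D\wt E-E\wt D=D(\wt C+\wt D\Phi)-(C+D\Phi)\wt D=D\wt C+D\wt D\Phi-C\wt D-D\Phi\wt D=D\wt C-C\wt D$, because $D\wt D\Phi=D\Phi\wt D$ fails in general — so I must be careful: $D\wt D\Phi$ and $D\Phi\wt D$ are not equal unless things commute. Let me recompute: $D\wt E=D\wt C+D\wt D\Phi$ and $E\wt D=C\wt D+D\Phi\wt D$, hence $D\wt E-E\wt D=(D\wt C-C\wt D)+(D\wt D\Phi-D\Phi\wt D)=(D\wt C-C\wt D)+D(\wt D\Phi-\Phi\wt D)$. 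This extra term does not vanish, so the naive reduction is too crude; instead one should factor differently, e.g. write $\widehat M(\l)=E^{-1}\bigl(D\wt E-E\wt D\bigr)\wt E^{-1}$ and manipulate $D\wt E-E\wt D$ by peeling off a right factor involving $\Phi$ rather than expanding. Concretely, note $\begin{pmatrix}C&D\end{pmatrix}\begin{pmatrix}I\\\Phi\end{pmatrix}=E$ and $\begin{pmatrix}\wt C&\wt D\end{pmatrix}\begin{pmatrix}I\\\Phi\end{pmatrix}=\wt E$; the cleanest route is the block-matrix identity

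\[
 \begin{pmatrix}I&0\\ \wt E^{-1}\wt C & \wt E^{-1}\wt D\end{pmatrix}
 \begin{pmatrix}I & -\Phi \\ 0 & I\end{pmatrix}
 \begin{pmatrix}C & D\\ \wt C & \wt D\end{pmatrix}
 \begin{pmatrix}\Phi^{-1}&0\\ 0& I\end{pmatrix}
\]
or a similarly chosen product of invertible matrices that conjugates $\begin{pmatrix}C&D\\\wt C&\wt D\end{pmatrix}$ into a block form $\begin{pmatrix}I & * \\ 0 & \widehat M(\l)\cdot(\text{invertible})\end{pmatrix}$, from which $\rank$ of the big matrix equals $n+\rank\widehat M(\l)$ immediately. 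I would row-reduce $\begin{pmatrix}C&D\\\wt C&\wt D\end{pmatrix}$ using the invertibility of $E=C+D\Phi$: the column operations replacing the second block-column by (second column)$+$(first column)$\cdot$something, together with left multiplication by $\diag(E^{-1},\wt E^{-1})$ after a suitable block-row combination, should bring it to the form $\begin{pmatrix}I_n & * \\ 0 & \widehat M(\l)\end{pmatrix}$ up to invertible factors.

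Here is the concrete recipe I expect to work. Multiply $\begin{pmatrix}C&D\\\wt C&\wt D\end{pmatrix}$ on the right by the invertible matrix $P:=\begin{pmatrix}I & \Phi \\ 0 & I\end{pmatrix}$ to get $\begin{pmatrix}C & C\Phi+D\\\wt C & \wt C\Phi+\wt D\end{pmatrix}$ — wrong sign; instead use $P:=\begin{pmatrix}I & 0\\ \Phi^{-1} & I\end{pmatrix}$ or work with the column pair $(I,\Phi)^{\!\top}$ so that the combination $C\cdot I+D\cdot\Phi=E$ appears. After getting a block with $E$ in the top-left and $\wt E$ in the bottom-left, left-multiply by $\diag(E^{-1},I)$ and then clear the bottom-left block using $\wt E$-type operations; the surviving bottom-right block will be (up to an invertible factor) exactly $\widehat M(\l)=M_{C,D}(\l)-M_{\wt C,\wt D}(\l)$. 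Then $\rank\begin{pmatrix}C&D\\\wt C&\wt D\end{pmatrix}=n+\rank\widehat M(\l)$, and combining with Lemma~\ref{lem:rank.R-wtR} gives~\eqref{eq:rankR=rankCD}. The main obstacle is purely bookkeeping: choosing the right sequence of invertible block row/column operations so that the top-left block becomes $I_n$ and the Schur-complement-type bottom-right block becomes a scalar multiple (on each side by invertibles) of $\widehat M(\l)$ rather than of some other combination of the data; the key algebraic fact making this possible is that $E=C+D\Phi$ and $\wt E=\wt C+\wt D\Phi$ are invertible on the resolvent set, which is exactly what lets the $2n\times 2n$ matrix be reduced without losing rank.
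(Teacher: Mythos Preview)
Your approach is correct and is essentially the same as the paper's: after invoking Lemma~\ref{lem:rank.R-wtR}, the paper performs the block column operation $\begin{pmatrix}C&D\\\wt C&\wt D\end{pmatrix}\mapsto\begin{pmatrix}E&D\\\wt E&\wt D\end{pmatrix}$ (with $E=C+D\Phi(1,\l)$, $\wt E=\wt C+\wt D\Phi(1,\l)$), left-multiplies by $\diag(E^{-1},\wt E^{-1})$, and then a single block-row subtraction yields $\begin{pmatrix}0&E^{-1}D-\wt E^{-1}\wt D\\I_n&\wt E^{-1}\wt D\end{pmatrix}$, whose rank is visibly $n+\rank\widehat M(\l)$. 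Your false start computing $D\wt E-E\wt D$ directly is unnecessary (and, as you noticed, does not simplify), but the block-reduction you describe afterwards is exactly the argument; just use $\diag(E^{-1},\wt E^{-1})$ rather than $\diag(E^{-1},I)$ so that both block rows acquire the identity in the first block column and the Schur-type remainder is literally $\widehat M(\l)$.
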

%
%
\begin{proof}
Let us set $A := A(\l) := C + \Phi(1, \l) D$ and $\wt{A} := \wt{A}(\l) := \wt{C}
+ \Phi(1, \l) \wt{D}$. Note that matrices $A$ and $\wt{A}$ are nonsingular
since $\l \in \rho(L_{C,D}) \cap \rho(L_{\wt{C},\wt{D}})$. Taking this into
account we get
\begin{align} \label{eq:rankCD=rankM}
 \rank \begin{pmatrix} C & D \\ \wt{C} & \wt{D}\end{pmatrix}
 &= \rank\begin{pmatrix} C + \Phi(1,\l) D & D \notag \\
 \wt{C} + \Phi(1,\l) \wt{D} & \wt{D} \end{pmatrix}
 = \rank \begin{pmatrix} A & D \\ \wt{A} & \wt{D}\end{pmatrix} \\
 &= \rank \(\begin{pmatrix} A & 0 \\ 0 & \wt{A}\end{pmatrix}
 \begin{pmatrix} I_n & A^{-1} D \\ I_n & \wt{A}^{-1} \wt{D}\end{pmatrix}\)
 = \rank \begin{pmatrix} I_n & A^{-1} D \\
 I_n & \wt{A}^{-1} \wt{D}\end{pmatrix} \notag \\
 &= \rank \begin{pmatrix} 0 & A^{-1} D - \wt{A}^{-1} \wt{D} \\
 I_n & \wt{A}^{-1} \wt{D}\end{pmatrix}
 = n + \rank\(A^{-1} D - \wt{A}^{-1} \wt{D}\) \notag \\
 &= n + \rank\(\bigl(C + \Phi(1, \l) D\bigr)^{-1} D
 - \bigl(\wt{C} + \Phi(1, \l) \wt{D}\bigr)^{-1} \wt{D}\) \notag \\
 &= n + \rank\(M_{C,D}(\l) - M_{\wt{C},\wt{D}}(\l)\)
 = n + \rank \widehat{M}(\l).
\end{align}
Formula~\eqref{eq:rankR=rankCD} now follows from~\eqref{eq:rankR=rankM}
and~\eqref{eq:rankCD=rankM}.
\end{proof}
%
%
\subsection{Resolvent difference properties for $2 \times 2$ system}
%
%
Let $\Phi(x,\l)$ be the fundamental matrix of the system~\eqref{eq:system} defined
in the previous subsection and
\begin{equation} \label{eq:Phi.def.2x2}
 \Phi(x,\l) :=
 \begin{pmatrix}
 \Phi_1(x,\l) &
 \Phi_2(x,\l)
 \end{pmatrix}, \quad
 \Phi_j(x,\l) :=
 \begin{pmatrix}
 \varphi_{1j}(x,\l) \\
 \varphi_{2j}(x,\l)
 \end{pmatrix}, \quad j\in\{1,2\}.
\end{equation}
The eigenvalues of the problem~\eqref{eq:system}--\eqref{eq:BC} are
the roots of the characteristic equation $\Delta(\l) := \det
U(\l)=0$, where
\begin{equation}\label{eq:U}
 U(\l) :=
 \begin{pmatrix}
 U_1(\Phi_1(x,\l)) & U_1(\Phi_2(x,\l)) \\
 U_2(\Phi_1(x,\l)) & U_2(\Phi_2(x,\l))
 \end{pmatrix} =:
 \begin{pmatrix}
 u_{11}(\l) & u_{12}(\l) \\
 u_{21}(\l) & u_{22}(\l)
 \end{pmatrix}.
\end{equation}
Further, let us set
\begin{equation}\label{eq:Ajk.Jjk}
 A_{jk} := \begin{pmatrix} a_{1j} & a_{1k} \\ a_{2j} & a_{2k} \end{pmatrix}
 \quad\text{and}\quad
 J_{jk} := \det A_{jk}, \quad j,k\in\{1,\ldots,4\}.
\end{equation}
Note, that boundary conditions~\eqref{eq:BC} takes the form~\eqref{eq:BC.nxn}
if we set $C := A_{12}$ and $D := A_{34}$. In particular, $U(\l) = C + D \Phi(1, \l)$.

Taking into account notations~\eqref{eq:Ajk.Jjk} we arrive at the following expression
for the characteristic determinant:
\begin{equation}\label{eq:Delta}
 \Delta(\l) = J_{12} + J_{34}e^{i(b_1+b_2)\l}
 + J_{32}\varphi_{11}(\l) + J_{13}\varphi_{12}(\l)
 + J_{42}\varphi_{21}(\l) + J_{14}\varphi_{22}(\l),
\end{equation}
where $\varphi_{jk}(\l) := \varphi_{jk}(1,\l)$. If $Q=0$ then
$\varphi_{12}(x,\l) = \varphi_{21}(x,\l) = 0$, and the
characteristic determinant $\Delta_0(\cdot)$ has the form
\begin{equation}\label{eq:Delta0}
 \Delta_0(\l) = J_{12} + J_{34}e^{i(b_1+b_2)\l}
 + J_{32}e^{ib_1\l} + J_{14}e^{ib_2\l}.
\end{equation}

In what follows we denote by $R_{U_1,U_2}(\l) := (L_{U_1,U_2} - \l)^{-1}$ the
resolvent of the operator $L_{U_1,U_2}$ associated to the
BVP~\eqref{eq:system}--\eqref{eq:BC}.
Straightforward calculations lead to explicit formula for the matrix function
$M_{U_1,U_2}(\l) := M_{C,D}(\l)$ given by~\eqref{eq:MU.def}, via determinants
$J_{jk}$ from~\eqref{eq:Ajk.Jjk}.
%
%
\begin{lemma} \label{lem:MU}
Let $\l\in \rho(L_{U_1,U_2})$. Then $\Delta(\l) \ne 0$,
$M_{U_1,U_2}(\l)$ is well defined and admits the following
representation
\begin{equation}\label{eq:MU}
 M_{U_1,U_2}(\l) = \frac{1}{\Delta(\l)} \begin{pmatrix}
 J_{32} + J_{34} \varphi_{22}(\l) & J_{42} - J_{34} \varphi_{12}(\l) \\
 J_{13} - J_{34} \varphi_{21}(\l) & J_{14} + J_{34} \varphi_{11}(\l) \\
 \end{pmatrix}.
\end{equation}
Moreover,
\begin{equation}\label{eq:detMU}
 \det M_{U_1,U_2}(\l) = \frac{\det D}{\det(C+D\Phi(1,\l))}
 = \frac{J_{34}}{\Delta(\l)},
\end{equation}
where $C = A_{12}$ and $D = A_{34}$.
\end{lemma}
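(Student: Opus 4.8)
The plan is to compute $M_{U_1,U_2}(\l) = (C + D\Phi(1,\l))^{-1} D$ directly, using $C = A_{12}$, $D = A_{34}$, and the $2 \times 2$ inverse formula. First I would write $U(\l) = C + D\Phi(1,\l)$ explicitly in terms of the entries $a_{jk}$ and $\varphi_{jk}(\l) := \varphi_{jk}(1,\l)$. Since $\Phi(1,\l)$ has columns $\Phi_1(1,\l) = \col(\varphi_{11},\varphi_{21})$ and $\Phi_2(1,\l) = \col(\varphi_{12},\varphi_{22})$, the product $D\Phi(1,\l) = A_{34}\Phi(1,\l)$ has $(j,k)$ entry $a_{j3}\varphi_{1k} + a_{j4}\varphi_{2k}$. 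Adding $C = A_{12}$ gives
\begin{equation*}
 U(\l) = \begin{pmatrix}
 a_{11} + a_{13}\varphi_{11} + a_{14}\varphi_{21} & a_{12} + a_{13}\varphi_{12} + a_{14}\varphi_{22} \\
 a_{21} + a_{23}\varphi_{11} + a_{24}\varphi_{21} & a_{22} + a_{23}\varphi_{12} + a_{24}\varphi_{22}
 \end{pmatrix}.
\end{equation*}
Its determinant, expanded multilinearly in the columns of $\Phi(1,\l)$ and grouped by the minors $J_{jk} = \det A_{jk}$, is exactly $\Delta(\l)$ as in~\eqref{eq:Delta}; this reproves the stated formula for $\Delta(\l)$ along the way, and in particular $\Delta(\l)\neq 0$ for $\l\in\rho(L_{U_1,U_2})$ since $U(\l)$ is then invertible.

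Next I would apply $(C+D\Phi(1,\l))^{-1} = \Delta(\l)^{-1}\,\mathrm{adj}\,U(\l)$ and multiply by $D = A_{34}$. Writing $\mathrm{adj}\,U(\l) = \begin{pmatrix} u_{22} & -u_{12} \\ -u_{21} & u_{11}\end{pmatrix}$ with the $u_{jk}$ read off above, the product $\mathrm{adj}(U(\l))\cdot A_{34}$ is a $2\times 2$ matrix each of whose four entries, after expanding, is a linear combination of $1$, $\varphi_{11},\varphi_{12},\varphi_{21},\varphi_{22}$ with coefficients that are quadratic in the $a_{jk}$. The key bookkeeping step is to recognize each such coefficient combination as one of the minors $J_{jk}$ or as $J_{34}$ times a single $\varphi$, and to check that the pure $\varphi_{jk}\varphi_{lm}$ cross terms cancel (they must, since $\det(A_{34}\Phi(1,\l)) = J_{34}\det\Phi(1,\l)$ and $\det\Phi(1,\l)$ is itself constrained). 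Carrying this out yields
\begin{equation*}
 M_{U_1,U_2}(\l) = \frac{1}{\Delta(\l)} \begin{pmatrix}
 J_{32} + J_{34}\varphi_{22}(\l) & J_{42} - J_{34}\varphi_{12}(\l) \\
 J_{13} - J_{34}\varphi_{21}(\l) & J_{14} + J_{34}\varphi_{11}(\l)
 \end{pmatrix},
\end{equation*}
as claimed.

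Finally, for~\eqref{eq:detMU} I would use multiplicativity of the determinant: $\det M_{U_1,U_2}(\l) = \det D / \det(C + D\Phi(1,\l)) = \det A_{34}/\Delta(\l) = J_{34}/\Delta(\l)$, which is immediate and needs no further computation (one may optionally cross-check it against the explicit matrix above, where the determinant of the displayed $2\times2$ matrix should collapse to $J_{34}\Delta(\l)$ after using the identity $\varphi_{11}\varphi_{22} - \varphi_{12}\varphi_{21} = \det\Phi(1,\l)$ together with Liouville's formula for $\det\Phi(1,\l)$; since $B$ is diagonal with the given entries and $Q$ has zero diagonal, $\operatorname{tr}(iBQ) \equiv 0$, so $\det\Phi(1,\l) = \det\Phi(0,\l)\,e^{\,i\l(b_1+b_2)} = e^{\,i\l(b_1+b_2)}$, which is the input making all cross terms reorganize correctly). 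I expect the only real obstacle to be the careful algebraic matching of coefficients to the named minors $J_{jk}$ and verifying the cancellation of the quadratic-in-$\varphi$ terms; everything else is a routine application of the $2\times2$ adjugate formula and determinant multiplicativity.
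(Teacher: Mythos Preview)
Your approach is correct and essentially identical to the paper's: write out $U(\l)=C+D\Phi(1,\l)$ explicitly, invert via the $2\times 2$ adjugate, multiply by $D=A_{34}$, and identify each coefficient as a minor $J_{jk}$. One small overcomplication: since the adjugate of a $2\times 2$ matrix merely swaps and signs the \emph{entries} of $U(\l)$ (no products are taken), and each entry of $U(\l)$ is affine in the $\varphi_{jk}$, the product $\mathrm{adj}(U(\l))\cdot D$ is automatically affine in the $\varphi_{jk}$; there are no quadratic $\varphi_{jk}\varphi_{lm}$ cross terms to cancel in the derivation of~\eqref{eq:MU}. The paper simply displays the $(1,1)$-entry computation as a sample and remarks that the remaining three are verified similarly. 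Your optional cross-check of~\eqref{eq:detMU} against the explicit matrix is fine (the constant term matches via the Pl\"ucker relation $J_{12}J_{34}=J_{14}J_{32}-J_{13}J_{42}$), but the paper does not bother with it, relying solely on multiplicativity of the determinant.
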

%
%
\begin{proof}
According to definition~\eqref{eq:U} $\Delta(\l) := \det U(\l) =
\det(C+D \Phi(1,\l))$ and
\begin{equation} \label{eq:C+D.Phi=2x2}
 C+D \Phi(1,\l)
 = \begin{pmatrix}
 a_{11} + a_{13} \varphi_{11}(\l) + a_{14} \varphi_{21}(\l) &
 a_{12} + a_{13} \varphi_{12}(\l) + a_{14} \varphi_{22}(\l) \\
 a_{21} + a_{23} \varphi_{11}(\l) + a_{24} \varphi_{21}(\l) &
 a_{22} + a_{23} \varphi_{12}(\l) + a_{24} \varphi_{22}(\l) \\
 \end{pmatrix}.
\end{equation}
Hence the following formula for the inverse matrix holds
\begin{multline} \label{eq:inv.C+D.Phi=2x2}
 (C+D \Phi(1,\l))^{-1} = \\
 \frac{1}{\Delta(\l)} \begin{pmatrix}
 a_{22} + a_{23} \varphi_{12}(\l) + a_{24} \varphi_{22}(\l) &
 -(a_{12} + a_{13} \varphi_{12}(\l) + a_{14} \varphi_{22}(\l)) \\
 -(a_{21} + a_{23} \varphi_{11}(\l) + a_{24} \varphi_{21}(\l)) &
 a_{11} + a_{13} \varphi_{11} (\l) + a_{14} \varphi_{21}(\l) \\
 \end{pmatrix}.
\end{multline}
Multiplying~\eqref{eq:inv.C+D.Phi=2x2} by $D$ from the left we
arrive at formula~\eqref{eq:MU}. E.g. for the first entry we have
\begin{align}
 \Delta(\l) \bigl[M_{U_1,U_2}(\l)\bigr]_{11}
 &= (a_{22} + a_{23} \varphi_{12}(\l) + a_{24} \varphi_{22}(\l)) a_{13}
 - (a_{12} + a_{13} \varphi_{12}(\l) + a_{14} \varphi_{22}(\l)) a_{23} \notag \\
 &= (a_{22} a_{13} - a_{12} a_{23}) + (a_{23} a_{13} - a_{13} a_{23}) \varphi_{12}(\l)
 + (a_{24} a_{13} - a_{14} a_{23}) \varphi_{22}(\l) \notag \\
 &= J_{32} + J_{34} \varphi_{22}(\l).
\label{eq:MU.11}
\end{align}
The rest equalities in~\eqref{eq:MU} are verified similarly.
\end{proof}
Alongside the operator $L_{U_1,U_2}$ we consider the operator
$L_{\wt{U}_1,\wt{U}_2} := L_{\wt{U}_1,\wt{U}_2}(B,Q)$ associated to
equation~\eqref{eq:system} subject to the boundary conditions
\begin{equation} \label{eq:wtBC}
 \wt{U}_j(y) := \wt{a}_{j 1}y_1(0) + \wt{a}_{j 2}y_2(0)
 + \wt{a}_{j 3}y_1(1) + \wt{a}_{j 4}y_2(1)= 0,
 \quad j \in \{1,2\}.
\end{equation}
Similarly to~\eqref{eq:Delta} we have the following formula for the
characteristic determinant $\wt{\Delta}(\cdot)$ of the operator
$L_{\wt{U}_1,\wt{U}_2}$,
\begin{equation}\label{eq:wtDelta}
 \wt{\Delta}(\l) = \wt{J}_{12} + \wt{J}_{34}e^{i(b_1+b_2)\l}
 + \wt{J}_{32}\varphi_{11}(\l) + \wt{J}_{13}\varphi_{12}(\l)
 + \wt{J}_{42}\varphi_{21}(\l) + \wt{J}_{14}\varphi_{22}(\l),
\end{equation}
where
\begin{equation} \label{eq:wtJjk.def}
 \wt{J}_{jk} := \det \wt{A}_{jk}, \quad \wt{A}_{jk} := \begin{pmatrix}
 \wt{a}_{1j} & \wt{a}_{1k} \\
 \wt{a}_{2j} & \wt{a}_{2k}
 \end{pmatrix}, \quad j,k \in \{1,\ldots,4\}.
\end{equation}
Note that $L_{\wt{U}_1, \wt{U}_2} = L_{\wt{C}, \wt{D}}$ with $\wt{C} :=
\wt{A}_{12}$ and $\wt{D} := \wt{A}_{34}$. The following result immediately follows
from Proposition~\ref{prop:rankR=rankCD}.
%
%
\begin{corollary} \label{cor:one.dim.crit.2x2}
Let $L_{U_1, U_2} \ne L_{\wt{U}_1,\wt{U}_2}$ and $\l \in \rho(L_{U_1, U_2}) \cap
\rho(L_{\wt{U}_1,\wt{U}_2})$. Then the resolvent difference
$R_{\wt{U}_1,\wt{U}_2}(\l) - R_{U_1,U_2}(\l)$ is one-dimensional if and only if
\begin{equation} \label{eq:det.A.wtA=0}
 \det \begin{pmatrix} A_{12} & A_{34} \\ \wt{A}_{12} & \wt{A}_{34}
 \end{pmatrix} = 0,
\end{equation}
which in turn is equivalent to
\begin{equation} \label{eq:Jjk.wtJjk=0}
 J_{12} \wt{J}_{34} + \wt{J}_{12} J_{34} +
 J_{13} \wt{J}_{42} + \wt{J}_{13} J_{42} +
 J_{14} \wt{J}_{23} + \wt{J}_{14} J_{23} = 0.
\end{equation}
\end{corollary}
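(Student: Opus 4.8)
The plan is to deduce this corollary from Proposition~\ref{prop:rankR=rankCD}, after which only the translation of a rank condition into the scalar identity~\eqref{eq:Jjk.wtJjk=0} will remain. First I would recall, as already noted just before the statement, that $L_{U_1,U_2} = L_{C,D}$ and $L_{\wt{U}_1,\wt{U}_2} = L_{\wt{C},\wt{D}}$ with $C = A_{12}$, $D = A_{34}$, $\wt{C} = \wt{A}_{12}$, $\wt{D} = \wt{A}_{34}$. Applying Proposition~\ref{prop:rankR=rankCD} with $n = 2$ then gives
\[
 \rank\bigl(R_{\wt{U}_1,\wt{U}_2}(\l) - R_{U_1,U_2}(\l)\bigr)
 = \rank\begin{pmatrix} A_{12} & A_{34} \\ \wt{A}_{12} & \wt{A}_{34}\end{pmatrix} - 2 .
\]

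Next I would use the hypothesis $L_{U_1,U_2}\ne L_{\wt{U}_1,\wt{U}_2}$. Since a closed operator is recovered from its resolvent at any point of its resolvent set, the two operators being distinct forces $R_{\wt{U}_1,\wt{U}_2}(\l) \ne R_{U_1,U_2}(\l)$, so the left-hand side above is at least $1$; equivalently, the $4\times 4$ matrix has rank $\ge 3$. As its rank trivially does not exceed $4$, the resolvent difference is one-dimensional precisely when this matrix has rank exactly $3$, and --- the rank already being known to be at least $3$ --- this is the same as the vanishing of its determinant, i.e.~\eqref{eq:det.A.wtA=0}.

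Finally, to match~\eqref{eq:det.A.wtA=0} with~\eqref{eq:Jjk.wtJjk=0}, I would expand the $4\times 4$ determinant by the Laplace rule (expansion by complementary minors) along its first two rows. The $2\times 2$ minor in rows $1,2$ and columns $j<k$ is precisely $J_{jk} = \det A_{jk}$, while the complementary minor in rows $3,4$ (which carry the tilded coefficients) and the two remaining columns $\{l,m\}$, $l<m$, is $\wt{J}_{lm}$. Collecting the six terms with the Laplace signs $(-1)^{3+j+k}$ gives
\[
 \det\begin{pmatrix} A_{12} & A_{34} \\ \wt{A}_{12} & \wt{A}_{34}\end{pmatrix}
 = J_{12}\wt{J}_{34} + J_{34}\wt{J}_{12} - J_{13}\wt{J}_{24} - J_{24}\wt{J}_{13} + J_{14}\wt{J}_{23} + J_{23}\wt{J}_{14},
\]
and replacing $J_{24}$, $\wt{J}_{24}$ by $-J_{42}$, $-\wt{J}_{42}$ via the antisymmetry $J_{kj} = -J_{jk}$ (and likewise for $\wt{J}$) turns the right-hand side into the left-hand side of~\eqref{eq:Jjk.wtJjk=0}. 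Hence~\eqref{eq:det.A.wtA=0} and~\eqref{eq:Jjk.wtJjk=0} are equivalent, which completes the argument.

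I do not expect any genuine obstacle: the real content is Proposition~\ref{prop:rankR=rankCD}, and the only step requiring care is the sign bookkeeping in the Laplace expansion together with the index conventions --- in particular, it is the relation $J_{42} = -J_{24}$ (and its tilded analogue) that accounts for the plus signs appearing in~\eqref{eq:Jjk.wtJjk=0}.
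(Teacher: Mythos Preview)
Your proposal is correct and follows essentially the same route as the paper: apply Proposition~\ref{prop:rankR=rankCD} with $n=2$, observe the $4\times4$ matrix has rank in $\{3,4\}$ (the paper argues this from $\rank(A_{12}\ A_{34})=2$ plus $L_{U_1,U_2}\ne L_{\wt U_1,\wt U_2}$ rather than via the resolvents, but the conclusion is the same), and then expand the determinant by Laplace along the first two rows. Your sign bookkeeping with $J_{42}=-J_{24}$ is correct and matches~\eqref{eq:Jjk.wtJjk=0}.
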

%
%
\begin{proof}
Since $\rank(A_{12} \ \ A_{34}) = \rank(\wt{A}_{12} \ \ \wt{A}_{34}) = 2$ and
$L_{U_1,U_2} \ne L_{\wt{U}_1, \wt{U}_2}$ it follows that
\begin{equation}
 r:= \rank\begin{pmatrix} A_{12} & A_{34} \\ \wt{A}_{12} & \wt{A}_{34}
 \end{pmatrix} \in \{3, 4\}.
\end{equation}
Hence, $r = 3$ if and only if condition~\eqref{eq:det.A.wtA=0} holds.
In turn, $r = 3$ is equivalent to the fact that the resolvent difference
$R_{\wt{U}_1,\wt{U}_2}(\l) - R_{U_1,U_2}(\l)$ is one-dimensional due to
Proposition~\ref{prop:rankR=rankCD}.

Finally, applying Laplace expansion by the first 2 rows to the determinant
in~\eqref{eq:det.A.wtA=0} and taking into account definition of $J_{jk}$
and $\wt{J}_{jk}$ we get equivalence of~\eqref{eq:det.A.wtA=0}
and~\eqref{eq:Jjk.wtJjk=0}.
\end{proof}
%
%
\subsection{Special boundary conditions}
%
%
Next we consider system~\eqref{eq:system}
\begin{equation}\label{eq:system2}
 \cL y = -i B^{-1} y'+Q(x)y=\l y, \qquad y=\col(y_1,y_2), \qquad x\in[0,1].
\end{equation}
subject to the special boundary conditions
\begin{equation}\label{eq:U1.U2.h0.h1}
 \wt{U}_1(y):=y_1(0)-h_1 y_2(0)=0,\qquad
 \wt{U}_2(y)= y_1(1)-h_2 y_2(0)=0.
\end{equation}
Here $Q$ is given by~\eqref{eq:BQ} and $h_1,h_2\in\bC\setminus\{0\}$.

Denote by $L_{\wt{U}_1,\wt{U}_2} = L_{\wt{U}_1,\wt{U}_2}(B, Q)$ the operator
associated to the problem~\eqref{eq:system2}--\eqref{eq:U1.U2.h0.h1} in $\fH
= L^2([0,1]; \bC^2)$.

In the following proposition we indicate simple algebraic condition on coefficients
of general problem~\eqref{eq:system}--\eqref{eq:BC} ensuring that the resolvent
difference of operators $L_{U_1,U_2}$ and $L_{\wt{U}_1,\wt{U}_2}$ is one-dimensional.
Moreover, we give explicit form of this resolvent difference.
%
%
\begin{proposition}\label{prop:resolv.dif}
Let $L_{\wt{U}_1,\wt{U}_2}\ne L_{U_1,U_2}$ and $\l \in \rho(L_{U_1,U_2}) \cap
\rho(L_{\wt U_1,\wt U_2})$.

\begin{itemize}
\item[(i)] Then the resolvent difference $R_{\wt{U}_1,\wt{U}_2}(\l) -
R_{U_1,U_2}(\l)$ is one-dimensional if and only if
\begin{equation} \label{eq:h1.h0.one.dim}
 J_{34} h_2 + J_{14} h_1 = J_{42}.
\end{equation}
\item[(ii)] Let condition~\eqref{eq:h1.h0.one.dim} is fulfilled and in addition
\begin{equation} \label{eq:gamma.l.def}
 \gamma(\l) := J_{14}+J_{34} \varphi_{11}(\l) \ne 0,
\end{equation}
then the resolvent difference $R_{\wt{U}_1,\wt{U}_2}(\l) - R_{U_1,U_2}(\l)$
admits representation~\eqref{eq:R-wtR=a.b} with the vector-functions $\alpha =:
\col(\alpha_1, \alpha_2)$ and $\beta =: \col(\beta_1, \beta_2)$ given by
\begin{align}
\label{eq:alpha12}
 \alpha_1(\l) &= h_1 - \frac{J_{34} \wt{\Delta}(\l)}{\gamma(\l)},
 \qquad \alpha_2(\l) = 1, \\
\label{eq:beta12}
 \ol{\beta_1(\l)} &= \frac{J_{13}-J_{34} \varphi_{21}(\l)}{\Delta (\l)}
 - \frac{1}{\wt{\Delta}(\l)},
 \qquad \ol{\beta_2(\l)} = \frac{\gamma(\l)}{\Delta(\l)}.
\end{align}
\end{itemize}
\end{proposition}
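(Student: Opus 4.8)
The strategy is to reduce everything to the abstract rank-one formula of Lemma~\ref{lem:rank.R-wtR}. For part~(i), I would first observe that the boundary conditions~\eqref{eq:U1.U2.h0.h1} correspond in the notation~\eqref{eq:Ajk.Jjk} to the matrix of coefficients with $\wt{a}_{11}=1$, $\wt{a}_{12}=-h_1$, $\wt{a}_{13}=\wt{a}_{14}=0$ and $\wt{a}_{21}=-h_2$, $\wt{a}_{22}=0$, $\wt{a}_{23}=1$, $\wt{a}_{24}=0$. Hence I can compute the $\wt{J}_{jk}$ explicitly: the only nonzero ones are $\wt{J}_{12}=h_2$ (up to sign convention, $\wt{J}_{12}=\det\binom{1\ -h_1}{-h_2\ 0}=-h_1h_2$... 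I would recompute each carefully), $\wt{J}_{13}=1$, $\wt{J}_{14}=0$, $\wt{J}_{23}=h_1$, $\wt{J}_{24}=0$, $\wt{J}_{34}=0$. Plugging these into the symmetric criterion~\eqref{eq:Jjk.wtJjk=0} of Corollary~\ref{cor:one.dim.crit.2x2} collapses the six-term sum to a three-term relation, which after collecting coefficients of $h_1$, $h_2$ and the constant should be exactly $J_{34}h_2+J_{14}h_1=J_{42}$. So part~(i) is essentially a bookkeeping exercise: evaluate the $2\times2$ minors of the special coefficient matrix, substitute, and simplify.

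For part~(ii), the plan is to use the second half of Lemma~\ref{lem:rank.R-wtR}: once the rank is one, $\widehat M(\l)=M_{U_1,U_2}(\l)-M_{\wt U_1,\wt U_2}(\l)$ factors as $\alpha(\l)\beta(\l)^*$ and the resolvent difference takes the form~\eqref{eq:R-wtR=a.b}. I already have $M_{U_1,U_2}(\l)$ from Lemma~\ref{lem:MU}, formula~\eqref{eq:MU}. I need the analogue $M_{\wt U_1,\wt U_2}(\l)$ for the special BC; I would get it either directly from~\eqref{eq:MU.def} using the explicit $\wt C=\wt A_{12}$, $\wt D=\wt A_{34}$, or by specializing~\eqref{eq:MU} with the $\wt J_{jk}$ found above — note that $\wt J_{34}=0$ simplifies things considerably (e.g.\ $M_{\wt U_1,\wt U_2}(\l)=\frac1{\wt\Delta(\l)}\binom{\wt J_{32}\ \ \wt J_{42}}{\wt J_{13}\ \ \wt J_{14}}$ with $\wt J_{14}=0$, $\wt J_{42}=-h_1$, etc.). Subtracting gives $\widehat M(\l)$ entrywise. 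The claimed normalization $\alpha_2(\l)=1$ fixes the scaling ambiguity in the factorization, so $\alpha_1(\l)=\widehat M_{12}(\l)/\widehat M_{22}(\l)$ (or $\widehat M_{11}/\widehat M_{21}$ — consistency of these two is exactly the rank-one condition), and $\ol{\beta_1(\l)}=\widehat M_{21}(\l)$, $\ol{\beta_2(\l)}=\widehat M_{22}(\l)$ after the same normalization. The hypothesis $\gamma(\l)=J_{14}+J_{34}\varphi_{11}(\l)\ne0$ is precisely the nonvanishing of the $(2,2)$ entry of $M_{U_1,U_2}(\l)$ times $\Delta(\l)$, i.e.\ what is needed to divide by $\widehat M_{22}$.

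The main obstacle I anticipate is not conceptual but organizational: keeping all sign conventions for the minors $J_{jk}$ (the index order in~\eqref{eq:Ajk.Jjk} matters, e.g.\ $J_{32}$ vs.\ $J_{23}$) consistent between Lemma~\ref{lem:MU}, Corollary~\ref{cor:one.dim.crit.2x2}, and the special-BC computation, and then matching the resulting rational functions in $\varphi_{jk}(\l)$ against the stated formulas~\eqref{eq:alpha12}–\eqref{eq:beta12}. In particular, verifying that $\widehat M_{11}(\l)/\widehat M_{21}(\l)=\widehat M_{12}(\l)/\widehat M_{22}(\l)$ — which must hold under~\eqref{eq:h1.h0.one.dim} — and that this common value equals $h_1-J_{34}\wt\Delta(\l)/\gamma(\l)$ will require substituting the expansion~\eqref{eq:Delta} for $\Delta(\l)$ and~\eqref{eq:wtDelta} for $\wt\Delta(\l)$ and a short but delicate cancellation using the one-dimensionality relation. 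I would do this last step by clearing denominators: show $\widehat M_{12}(\l)\gamma(\l)=\widehat M_{22}(\l)\bigl(h_1\gamma(\l)-J_{34}\wt\Delta(\l)\bigr)$ as a polynomial identity in the $\varphi_{jk}(\l)$, which reduces~\eqref{eq:h1.h0.one.dim} to a linear identity among the coefficients.
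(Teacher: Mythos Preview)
Your approach matches the paper's proof exactly: part~(i) is Corollary~\ref{cor:one.dim.crit.2x2} specialized via the explicit $\wt J_{jk}$, and part~(ii) is Lemma~\ref{lem:rank.R-wtR} with the normalization $\alpha_2=1$, $\alpha_1=\widehat m_{12}/\widehat m_{22}$, $\ol{\beta_j}=\widehat m_{2j}$, followed by the simplification of $\alpha_1$ using~\eqref{eq:h1.h0.one.dim} and the explicit $\wt\Delta(\l)=-h_2+h_1\varphi_{11}(\l)+\varphi_{12}(\l)$. One small correction to your bookkeeping: from $\wt U_2(y)=y_1(1)-h_2 y_2(0)$ you get $\wt a_{21}=0$, $\wt a_{22}=-h_2$ (not the reverse), whence $\wt J_{12}=-h_2$, $\wt J_{13}=1$, $\wt J_{32}=h_1$, $\wt J_{14}=\wt J_{42}=\wt J_{34}=0$, and then~\eqref{eq:Jjk.wtJjk=0} collapses directly to~\eqref{eq:h1.h0.one.dim}.
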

%
%
\begin{proof}
\textbf{(i)} It follows from~\eqref{eq:U1.U2.h0.h1} that $\wt{A}_{12} =
\begin{pmatrix} 1 & -h_1 \\ 0 & -h_2 \end{pmatrix}$ and $\wt{A}_{34} =
\begin{pmatrix} 0 & 0 \\ 1 & 0 \end{pmatrix}$. Hence by definition of $\wt{J}_{jk}$
we have
\begin{equation} \label{eq:J12..J34}
 \wt{J}_{12} = -h_2, \ \ \wt{J}_{13} = 1, \ \ \wt{J}_{32} = h_1, \ \
 \wt{J}_{14} = \wt{J}_{42} = \wt{J}_{34} = 0.
\end{equation}
Thus condition~\eqref{eq:Jjk.wtJjk=0} transforms into~\eqref{eq:h1.h0.one.dim}.
Corollary~\ref{cor:one.dim.crit.2x2} now finishes the proof of part (i).

\textbf{(ii)} Due to (i) and Lemma~\ref{lem:rank.R-wtR}
condition~\eqref{eq:h1.h0.one.dim} yields that $\rank \widehat{M}(\l) = 1$.
Hence $\widehat{M}(\l)$ admits representation~\eqref{eq:whM=a.b} which for $n=2$
turns into
\begin{equation} \label{eq:whM=a.b.n=2}
 \widehat{M}({\l}) = \begin{pmatrix}
 \alpha_1(\l) \ol{\beta_1(\l)} & \alpha_1(\l) \ol{\beta_2(\l)} \\
 \alpha_2(\l) \ol{\beta_1(\l)} & \alpha_2(\l) \ol{\beta_2(\l)} \\
 \end{pmatrix}.
\end{equation}
Let us verify formulas~\eqref{eq:alpha12}--\eqref{eq:beta12} for
$\alpha_1(\l)$, $\alpha_2(\l)$, $\beta_1(\l)$, $\beta_2(\l)$. It follows
from~\eqref{eq:Delta},~\eqref{eq:MU} and~\eqref{eq:J12..J34} that
\begin{equation} \label{eq:Delta.h0.h1}
 \wt{\Delta}(\l) = -h_2 + h_1 \varphi_{11}(\l) + \varphi_{12}(\l),
 \quad M_{\wt{U}_1,\wt{U}_2}(\l) = \frac{1}{\wt{\Delta}(\l)}
 \begin{pmatrix} h_1 & 0 \\ 1 & 0 \end{pmatrix}.
\end{equation}
Put
\begin{equation} \label{eq:wtM.Delta.phi}
 M_{U_1,U_2}(\l) =:
 \begin{pmatrix} m_{11} & m_{12} \\ m_{21} & m_{22} \end{pmatrix},
\end{equation}
where for convenience we omitted dependency on $\l$. It follows
from~\eqref{eq:Delta.h0.h1} and~\eqref{eq:wtM.Delta.phi} that
\begin{equation} \label{eq:M-wtM.spec}
 \widehat{M}({\l}) =
 \begin{pmatrix}
 m_{11} - \frac{h_1}{\wt{\Delta}(\l)}& m_{12} \\
 m_{21} - \frac{1}{\wt{\Delta}(\l)} & m_{22}
 \end{pmatrix} =: \begin{pmatrix}
 \widehat{m}_{11} & \widehat{m}_{12} \\
 \widehat{m}_{21} & \widehat{m}_{22}
 \end{pmatrix}.
\end{equation}
If $\widehat{m}_{22} \ne 0$ and $\det \widehat{M}({\l}) = 0$ it can
easily be seen that representation~\eqref{eq:whM=a.b.n=2} takes place
for instance with
\begin{equation} \label{eq:a1.a2.b1.b2.sjk}
 \alpha_1(\l) = \frac{\widehat{m}_{12}}{\widehat{m}_{22}},
 \quad \alpha_2(\l) = 1, \quad \ol{\beta_1(\l)} = \widehat{m}_{21},
 \quad \ol{\beta_2(\l)} = \widehat{m}_{22}.
\end{equation}
It follows from~\eqref{eq:M-wtM.spec},~\eqref{eq:MU} and
definition~\eqref{eq:gamma.l.def}
of $\gamma(\l)$ that
\begin{equation} \label{eq:s12.s21.s22}
 \widehat{m}_{12} = \frac{J_{42} - J_{34} \varphi_{12}(\l)}{\Delta(\l)}, \quad
 \widehat{m}_{21} = \frac{J_{13} - J_{34} \varphi_{21}(\l)} {\Delta(\l)}
 - \frac{1}{\wt{\Delta}(\l)}, \quad
 \widehat{m}_{22} = \frac{\gamma(\l)}{\Delta(\l)}.
\end{equation}
Since $\gamma(\l) \ne 0$ it follows that $\widehat{m}_{22} \ne 0$.
Formula~\eqref{eq:beta12} now immediately follows
from~\eqref{eq:a1.a2.b1.b2.sjk} and~\eqref{eq:s12.s21.s22}. For
$\alpha_1(\l)$ we derive
from~\eqref{eq:s12.s21.s22},~\eqref{eq:h1.h0.one.dim},~\eqref{eq:Delta.h0.h1}
and~\eqref{eq:gamma.l.def}
\begin{align} \label{eq:alpha1}
 \alpha_1(\l) &= \frac{\widehat{m}_{12}}{\widehat{m}_{22}}
 = \frac{J_{42} - J_{34} \varphi_{12}(\l)}{\gamma(\l)}
 = \frac{J_{14} h_1 + J_{34} (h_2 - \varphi_{12}(\l))}{\gamma(\l)} \notag \\
 &= \frac{J_{14} h_1 + J_{34} (h_1 \varphi_{11}(\l) - \wt{\Delta}(\l))}{\gamma(\l)}
 = h_1 - \frac{J_{34} \wt{\Delta}(\l)}{\gamma(\l)}.
\end{align}
This completes the proof.
\end{proof}
%
%
Next we show that for almost each BVP~\eqref{eq:system}--\eqref{eq:BC},
there exist BVP~\eqref{eq:system2}--\eqref{eq:U1.U2.h0.h1} such that the
corresponding resolvent difference is one-dimensional.
%
%
\begin{corollary} \label{cor:one.dim.regular}
Let $L_{U_1,U_2}$ be an operator associated to the
problem~\eqref{eq:system}--\eqref{eq:BC} and let $J_{jk}$ be defined
by~\eqref{eq:Ajk.Jjk}. Assume that among numbers $\{J_{14}, J_{42}, J_{34}\}$
either at least two are non-zero or all zero. Then there exists a pair
$\{h_1, h_2\} $ with $h_1 h_2\ne 0$ such that the resolvent difference
$R_{\wt{U}_1,\wt{U}_2}(\l) - R_{U_1,U_2}(\l)$ is one-dimensional.

In particular, the latter holds for any regular boundary conditions
$U_1, U_2$ (see Definition~\ref{def1.1}).
\end{corollary}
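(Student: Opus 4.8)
The plan is to read off the criterion for a one-dimensional resolvent difference directly from Proposition~\ref{prop:resolv.dif}(i): once we know that $L_{\wt{U}_1,\wt{U}_2}\neq L_{U_1,U_2}$ and that the two operators have a common regular point $\l$, the difference $R_{\wt{U}_1,\wt{U}_2}(\l)-R_{U_1,U_2}(\l)$ is one-dimensional if and only if the single scalar relation~\eqref{eq:h1.h0.one.dim}, i.e.\ $J_{14}h_1+J_{34}h_2=J_{42}$, holds. Thus the whole assertion reduces to an elementary task: under the hypothesis on $\{J_{14},J_{42},J_{34}\}$, exhibit $h_1,h_2\in\bC\setminus\{0\}$ solving this linear equation, and then separately dispose of the two side conditions.

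The core of the proof is a three-case inspection of $J_{14}h_1+J_{34}h_2=J_{42}$. If $J_{14}=J_{42}=J_{34}=0$, the equation is trivial and $h_1=h_2=1$ works. If $J_{14}J_{34}\neq0$, I would take any $h_2\in\bC\setminus\{0,\,J_{42}/J_{34}\}$ and put $h_1:=(J_{42}-J_{34}h_2)/J_{14}$; then $h_2\neq0$ by the choice and $h_1\neq0$ because $h_2\neq J_{42}/J_{34}$. If exactly one of $J_{14},J_{34}$ is zero, the hypothesis forces the other one together with $J_{42}$ to be nonzero, so the equation degenerates to a nontrivial equation in a single unknown with nonzero right-hand side; solving it and setting the other unknown equal to $1$ again produces $h_1h_2\neq0$. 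The one point requiring attention is the observation that the hypothesis is exactly the right one: when precisely one of $J_{14},J_{42},J_{34}$ is nonzero the equation admits no solution with $h_1h_2\neq0$, and that is precisely the configuration the hypothesis excludes.

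There remain the two side conditions, which are routine. For any $h_1,h_2\neq0$ the matrix $(\wt{A}_{12}\ \wt{A}_{34})$ has rank $2$, so $L_{\wt{U}_1,\wt{U}_2}$ is a well-defined BVP; its characteristic determinant $\wt{\Delta}(\l)=-h_2+h_1\varphi_{11}(\l)+\varphi_{12}(\l)$ is not identically zero, because $h_1\neq0$ and the standard exponential asymptotics of $\varphi_{11}$, $\varphi_{12}$ prevent $h_1\varphi_{11}(\l)+\varphi_{12}(\l)$ from being constant; hence $\rho(L_{\wt{U}_1,\wt{U}_2})$ is the complement of a discrete set and meets $\rho(L_{U_1,U_2})$. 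Since moving along the solution line of the linear equation changes $\wt{\Delta}(\cdot)$, hence the operator, the equality $L_{\wt{U}_1,\wt{U}_2}=L_{U_1,U_2}$ can occur for at most finitely many of the pairs constructed above, so some admissible pair remains. Finally, the ``in particular'' clause follows because, by Definition~\ref{def1.1}, regular boundary conditions force $J_{34}\neq0$ (the coefficient of the leading exponential $e^{i(b_1+b_2)\l}$ in $\Delta_0$) and $J_{14}\neq0$, whence at least two of $J_{14},J_{42},J_{34}$ are nonzero and the hypothesis holds automatically. The only real obstacle here is to invoke correctly the precise non-vanishing conditions built into Definition~\ref{def1.1}; everything else is linear algebra.
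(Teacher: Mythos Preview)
Your proposal is correct and follows the same approach as the paper: invoke Proposition~\ref{prop:resolv.dif}(i) to reduce everything to finding $h_1,h_2\ne0$ with $J_{14}h_1+J_{34}h_2=J_{42}$, then do a short case split. Your case analysis is a bit more explicit than the paper's two-line version, and you also spell out the side conditions ($L_{\wt U_1,\wt U_2}\ne L_{U_1,U_2}$ and $\rho(L_{U_1,U_2})\cap\rho(L_{\wt U_1,\wt U_2})\ne\varnothing$), which the paper leaves implicit.

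The only genuine deviation is in the ``in particular'' clause. You argue that regularity directly gives $J_{14}\ne0$ and $J_{34}\ne0$; this is correct in the paper's standing assumption $b_1b_2^{-1}\notin\bR$, where regularity forces $J_{12}J_{14}J_{32}J_{34}\ne0$ (Appendix). The paper instead uses only the weaker consequence $J_{14}J_{32}\ne0$ (which holds for regular BC also in the Dirac-type case $b_1b_2^{-1}\in\bR$) and then observes that $J_{34}=J_{42}=0$ would make columns $2,3,4$ of $(a_{jk})$ pairwise dependent, forcing $J_{32}=0$, a contradiction. So your argument is shorter, while the paper's covers both cases $b_1b_2^{-1}\in\bR$ and $b_1b_2^{-1}\notin\bR$ uniformly.
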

%
%
\begin{proof}
By Proposition~\ref{prop:resolv.dif} it suffices to choose $h_1, h_2 \ne 0$
satisfying~\eqref{eq:h1.h0.one.dim}. If all $J_{14}$, $J_{42}$, $J_{34}$ are
zero any pair $\{h_1, h_2\}$ with $h_1 h_2\ne 0$ is suitable. If at least two of
these numbers are non-zero, then existence of required numbers $\{h_1, h_2\}$ is
immediate from~\eqref{eq:h1.h0.one.dim}.

Now assume boundary conditions to be regular. In both cases $b_1/b_2 \in \bR$
and $b_1/b_2 \notin \bR$ it implies that $J_{14} J_{32} \ne 0$ (see Appendix).
In this case $J_{34}$ and $J_{42}$ cannot equal zero simultaneously, since
otherwise $J_{32}$ would be zero. And thus, among numbers $J_{14}$, $J_{42}$,
$J_{34}$ at least two are non-zero.
\end{proof}
%
%
\section{Riesz basis property for $2 \times 2$ system}
\label{sec:riesz}
%
%
Here we consider system~\eqref{eq:system},
\begin{equation} \label{eq:riesz.system}
 \cL y := -i B^{-1} y' + Q(x)y = \l y, \qquad y = \col(y_1,y_2), \qquad
 x \in [0,1],
\end{equation}
where matrices $B$ and $Q(\cdot)$ are given by
\begin{equation} \label{eq:riesz.BQ}
 B = \text{diag}(b_1, b_2), \quad b_1 b_2^{-1} \not \in \bR, \qquad
 Q = \begin{pmatrix} 0 & Q_{12} \\ Q_{21} & 0 \end{pmatrix} \in
 A(\bD_R; \bC^{2 \times 2}).
\end{equation}
Here $Q \in A(\bD_R; \bC^{2 \times 2})$ means that $Q_{12}$ and $Q_{21}$ admit
an analytic continuation to the disk $\bD_R$ for some sufficiently large $R$.

In this section we study Riesz basis property for the system of root vectors of
the operator $L_{U_1,U_2}(B,Q)$ generated by equation
\eqref{eq:riesz.system}--\eqref{eq:riesz.BQ} subject to the boundary conditions
\begin{equation} \label{eq:riesz.BC}
	U_j(y) := a_{j 1}y_1(0) + a_{j 2}y_2(0) + a_{j 3}y_1(1) + a_{j 4}y_2(1) = 0,
	\quad j \in \{1,2\}.
\end{equation}

First, we recall a special case of Theorem 3.2 from~\cite{Mal99} on existence
of a triangular transformation operator for a general system~\eqref{eq:system.nxn}
with analytical potential matrix $Q(\cdot)$. We set
\begin{equation} \label{eq:Q.norm}
 \|Q\| := \|Q\|_{C[0,1]} := \max \{\|Q_{12}\|_{C[0,1]}, \|Q_{21}\|_{C[0,1]}\}.
\end{equation}
%
%
\begin{proposition}~\cite{Mal99} \label{prop:trans.oper}
Assume that $e_{\pm}(\cdot,\l)$ are the solutions of the Cauchy problem for
system~\eqref{eq:riesz.system}--\eqref{eq:riesz.BQ} satisfying the initial
conditions $e_{\pm}(0,\l) = \binom{1}{\pm 1}$. Then they admit the following
triangular representations
\begin{equation} \label{eq:epm=Ke}
 e_{\pm}(x,\l) 
 = e^0_{\pm}(x,\l) + \int^x_0 K^{\pm}(x,t) e^0_{\pm}(t,\l)dt,
\end{equation}
where
\begin{equation}
 e^0_{\pm}(x,\l) := \binom{e^{i b_1 \l x}}{\pm e^{i b_2 \l x}}, \qquad
 K^{\pm} = \bigl(K^{\pm}_{jk}\bigr)_{j,k=1}^2 \in
 C^{\infty}(\Omega; \bC^{2 \times 2}),
\end{equation}
and $\Omega:=\{(x,t):0\le t\le x\le 1\}$. Moreover, the following estimates hold
\begin{equation}\label{eq:est-mate_for_K_jk}
 \| K_{jk}^{\pm} \|_{C(\Omega)} \le C_0 \|Q\| \cdot \exp(C_1 \|Q\|),
 \quad j,k \in \{1,2\},
\end{equation}
with some constants $C_0, C_1 > 0$.
\end{proposition}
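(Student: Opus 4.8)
The plan is to construct the transformation operator by the method of successive approximations applied to the integral equation equivalent to the Cauchy problem. First I would rewrite the system \eqref{eq:riesz.system} with initial data $e_\pm(0,\l)=\binom{1}{\pm1}$ as a Volterra integral equation for the ``kernel'' function: substituting the Ansatz \eqref{eq:epm=Ke} into \eqref{eq:riesz.system} and using the free solutions $e^0_\pm$, one obtains (after integrating by parts and using the explicit exponentials) a hyperbolic-type system of first order PDEs for $K^\pm(x,t)$ on the triangle $\Omega$, together with boundary conditions on the characteristic lines $t=0$ and $t=x$. This Goursat-type problem is equivalent to a system of Volterra integral equations of the second kind whose free term is a bounded expression built from $Q$, and whose integral operator has a locally nilpotent (Volterra) structure in the variable $x-t$.

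Next I would solve this integral system by Picard iteration $K^\pm=\sum_{m\ge0}K^\pm_{(m)}$, with $K^\pm_{(0)}$ proportional to an integral of $Q$ and $K^\pm_{(m+1)}$ obtained by applying the integral operator to $K^\pm_{(m)}$. The key quantitative step is the standard Volterra estimate: each iteration gains a factor $\|Q\|$ and a shrinking power of the domain, so that $\|K^\pm_{(m)}\|_{C(\Omega)}\le \const\cdot\|Q\|\,(C\|Q\|)^m/m!$ for suitable $C$; summing the series yields the bound \eqref{eq:est-mate_for_K_jk} with $C_0\|Q\|\exp(C_1\|Q\|)$. Smoothness $K^\pm\in C^\infty(\Omega)$ follows because $Q$ is analytic, hence $C^\infty$, on $[0,1]$: differentiating the integral equations and running the same fixed-point argument in $C^k(\Omega)$ for every $k$ (or simply differentiating the uniformly convergent series term by term) propagates regularity. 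Finally, one checks directly that the function defined by \eqref{eq:epm=Ke} with this $K^\pm$ indeed solves \eqref{eq:riesz.system} with the prescribed initial condition, i.e. that the reduction to the integral equation was reversible.

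Since the statement is quoted as Theorem~3.2 of \cite{Mal99} (a special case thereof), the cleanest route is simply to cite that reference and only sketch the above, verifying that the hypotheses there — analyticity of $Q$ on a disc $\bD_R$, the off-diagonal structure \eqref{eq:riesz.BQ}, and $b_1b_2^{-1}\notin\bR$ — are the ones under which that theorem applies; the off-diagonal form of $Q$ is what makes the free solutions $e^0_\pm$ decouple into pure exponentials $e^{ib_j\l x}$, which is used in the derivation of the Goursat problem. The analyticity hypothesis is not needed for the existence of a continuous $K^\pm$ satisfying \eqref{eq:est-mate_for_K_jk} — summability of $Q$ suffices for that — but it is what upgrades $K^\pm$ to $C^\infty(\Omega)$, and that $C^\infty$ regularity is the feature exploited later in the Riesz-basis analysis.

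The main obstacle is bookkeeping rather than a genuine difficulty: one must set up the Goursat problem with the correct characteristic variables (so that the integral operator is genuinely Volterra and the iteration converges) and keep track of the four scalar components $K^\pm_{jk}$ and their mutual coupling through $Q_{12},Q_{21}$; the factorially decaying bound then falls out of the standard lemma on iterated Volterra kernels. For a paper at this level I would state the reduction, record the iteration bound, invoke the Volterra lemma, and refer to \cite{Mal99} for the remaining routine verifications, so that the proof occupies only a short paragraph.
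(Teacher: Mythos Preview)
Your proposal is correct and matches the paper's approach: the paper does not prove this proposition at all but simply cites \cite[Theorem~3.2]{Mal99} for the representation and adds the one-line remark that the estimates~\eqref{eq:est-mate_for_K_jk} ``are easily extracted from the proof'' there. Your Picard-iteration sketch of the Goursat problem is the standard argument behind that cited theorem, so your plan to cite \cite{Mal99} and keep the proof to a short paragraph is exactly what the authors do (indeed, they keep it to zero paragraphs).
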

%
%
Note that estimates~\eqref{eq:est-mate_for_K_jk} are easily extracted from the
proof of~\cite[Theorem~3.2]{Mal99}.

Let further
\begin{equation} \label{eq:riesz.Phi.def}
 \Phi(\cdot, \l) = \begin{pmatrix}
 \varphi_{11}(\cdot, \l) & \varphi_{12}(\cdot, \l) \\
 \varphi_{21}(\cdot,\l) & \varphi_{22}(\cdot,\l)
 \end{pmatrix} =: \begin{pmatrix}
 \Phi_1(\cdot, \l) & \Phi_2(\cdot, \l)
 \end{pmatrix}, \qquad \Phi(0, \l) = I_2,
\end{equation}
be a fundamental matrix solution of system~\eqref{eq:riesz.system}. Here
$\Phi_j(\cdot, \l)$, $j\in \{1,2\}$, is the $j$th column of $\Phi(\cdot, \l)$.

In the sequel we follow the scheme proposed in~\cite{LunMal16JMAA} for
investigating the Riesz basis property of BVP for Dirac type system $(B = B^*)$
with a summable potential matrix $Q$. The following result is similar to that
of Proposition 3.1 from~\cite{LunMal16JMAA}.
%
%
\begin{lemma}\label{prop:phi.jk=e+int}
Let $Q \in A(\bD_R; \bC^{2 \times 2})$. Then the functions $\varphi_{jk}(\cdot,
\l)$, $j,k \in \{1,2\}$, admit the following representations
\begin{align} \label{eq:phijk}
 \varphi_{jk}(x,\l) = \delta_{jk} e^{i b_k \l x}
 + \int_0^x R_{jk1}(x,t) e^{i b_1 \l t}dt
 + \int_0^x R_{jk2}(x,t) e^{i b_2 \l t}dt,
\end{align}
where $R_{jkh} \in C^{\infty}(\Omega)$ and there exists constants $C_0, C_1 > 0$
such that
\begin{equation} \label{eq:Rjkh}
 \| R_{jkh} \|_{C(\Omega)} \le C_0 \|Q\| \cdot \exp(C_1 \|Q\|),
 \quad j,k,h \in \{1,2\}.
\end{equation}
\end{lemma}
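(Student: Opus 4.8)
The plan is to derive the representation \eqref{eq:phijk} for the columns $\Phi_1(\cdot,\l)$ and $\Phi_2(\cdot,\l)$ of the fundamental matrix by expressing them as linear combinations of the solutions $e_{\pm}(\cdot,\l)$ from Proposition~\ref{prop:trans.oper} and then feeding in the triangular representation \eqref{eq:epm=Ke}. The key observation is that $e_{\pm}(0,\l) = \binom{1}{\pm 1}$, so
\[
 \Phi_1(0,\l) = \binom{1}{0} = \tfrac12\bigl(e_+(0,\l) + e_-(0,\l)\bigr),
 \qquad
 \Phi_2(0,\l) = \binom{0}{1} = \tfrac12\bigl(e_+(0,\l) - e_-(0,\l)\bigr),
\]
and since all of these are solutions of the same linear system \eqref{eq:riesz.system}, the corresponding identities hold for all $x$:
\[
 \Phi_1(x,\l) = \tfrac12\bigl(e_+(x,\l) + e_-(x,\l)\bigr),
 \qquad
 \Phi_2(x,\l) = \tfrac12\bigl(e_+(x,\l) - e_-(x,\l)\bigr).
\]

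First I would substitute the triangular formula \eqref{eq:epm=Ke} with $e^0_{\pm}(x,\l) = \binom{e^{ib_1\l x}}{\pm e^{ib_2\l x}}$ into these two identities and read off each scalar entry $\varphi_{jk}(x,\l)$. For instance, the $(1,1)$ entry is
\[
 \varphi_{11}(x,\l)
 = \tfrac12\Bigl[e^{ib_1\l x} + e^{ib_1\l x}
 + \int_0^x \bigl(K^+_{11}(x,t) + K^-_{11}(x,t)\bigr) e^{ib_1\l t}\,dt
 + \int_0^x \bigl(K^+_{12}(x,t) - K^-_{12}(x,t)\bigr) e^{ib_2\l t}\,dt\Bigr],
\]
which is of the form \eqref{eq:phijk} with $\delta_{11}=1$, $R_{111} = \tfrac12(K^+_{11}+K^-_{11})$ and $R_{112} = \tfrac12(K^+_{12}-K^-_{12})$. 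The other three entries are handled identically; in each case the diagonal term $\delta_{jk}e^{ib_k\l x}$ emerges from the $e^0_{\pm}$ parts (with the off-diagonal exponentials cancelling because of the $\pm$ signs), while the integral kernels $R_{jkh}$ are $\pm\tfrac12$ times sums or differences of the $K^{\pm}_{jh}$. Smoothness $R_{jkh}\in C^\infty(\Omega)$ is then inherited directly from $K^{\pm}_{jk}\in C^\infty(\Omega)$, and the bound \eqref{eq:Rjkh} follows from the triangle inequality applied to \eqref{eq:est-mate_for_K_jk}, possibly after enlarging the constant $C_0$ by a harmless factor.

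There is essentially no obstacle here: the argument is a bookkeeping exercise once the decomposition of $\Phi_j$ in terms of $e_{\pm}$ is in place, and the only point requiring a word of care is the justification that the linear relation between $\Phi_j(x,\l)$ and $e_{\pm}(x,\l)$, valid at $x=0$, propagates to all $x\in[0,1]$ — this is immediate from uniqueness for the linear Cauchy problem \eqref{eq:riesz.system}, since both sides solve the same first order system with the same initial data. I would also remark, as in the analogous Proposition~3.1 of~\cite{LunMal16JMAA}, that the analyticity hypothesis $Q\in A(\bD_R;\bC^{2\times 2})$ is used only through Proposition~\ref{prop:trans.oper}; the representation \eqref{eq:phijk} and the estimate \eqref{eq:Rjkh} are exactly what is needed in the subsequent analysis of the characteristic determinant and the Riesz basis property.
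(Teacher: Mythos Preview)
Your proof is correct and follows essentially the same route as the paper: both obtain $\Phi_1 = \tfrac12(e_+ + e_-)$ and $\Phi_2 = \tfrac12(e_+ - e_-)$ from matching initial data and Cauchy uniqueness, then substitute the triangular representation \eqref{eq:epm=Ke} to read off the kernels $R_{jkh}$ as linear combinations of the $K^{\pm}_{jh}$. If anything, you spell out more explicitly than the paper how the individual $R_{jkh}$ arise and why the estimate \eqref{eq:Rjkh} follows from \eqref{eq:est-mate_for_K_jk}.
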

%
%
\begin{proof}
Comparing initial conditions and applying the Cauchy uniqueness theorem one
easily gets $2 \Phi_1(\cdot,\l) = 2 \begin{pmatrix} \varphi_{11}(\cdot, \l) \\
\varphi_{21}(\cdot,\l) \end{pmatrix} = e_{+}(\cdot, \l) + e_{-}(\cdot, \l)$.
Inserting in place of $e_{+}(\cdot, \l)$ and $e_{-}(\cdot, \l)$ their expressions
from~\eqref{eq:epm=Ke} one arrives at~\eqref{eq:phijk}--\eqref{eq:Rjkh} for
$k=1$. Relations~\eqref{eq:phijk}--\eqref{eq:Rjkh} for $k=2$ are proved similarly.
\end{proof}
%
%
Next we obtain a formula for the characteristic determinant $\Delta(\cdot)$ of
the BVP~\eqref{eq:riesz.system}--\eqref{eq:riesz.BC} similar to that used
in~\cite[Lemma 4.1]{LunMal16JMAA}.
%
%
\begin{lemma} \label{lem:Delta=Delta0+}
Let $Q \in A(\bD_R; \bC^{2 \times 2})$. The characteristic determinant
$\Delta(\cdot)$ of the BVP~\eqref{eq:riesz.system}--\eqref{eq:riesz.BC} is an
entire function admitting the following representation
\begin{equation}\label{eq:Delta=Delta0+}
 \Delta(\l) = \Delta_0(\l)
 + \int^1_0 g_1(t) e^{i b_1 \l t} dt
 + \int^1_0 g_2(t) e^{i b_2 \l t} dt.
\end{equation}
Here $g_{j} \in C^{\infty}[0,1]$, \ $j \in \{1,2\}$, and
\begin{equation} \label{eq:gj.in.C}
 \| g_{j} \|_{C[0,1]} \le C_0 \|Q\| \cdot \exp(C_1 \|Q\|), \qquad j \in \{1,2\},
\end{equation}
with some constants $C_0> 0$ and $C_1> 0$.
\end{lemma}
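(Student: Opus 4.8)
The strategy is to substitute the triangular representations from Lemma~\ref{prop:phi.jk=e+int} into the explicit formula~\eqref{eq:Delta} for the characteristic determinant and collect terms. Recall that
\begin{equation*}
 \Delta(\l) = J_{12} + J_{34}e^{i(b_1+b_2)\l}
 + J_{32}\varphi_{11}(\l) + J_{13}\varphi_{12}(\l)
 + J_{42}\varphi_{21}(\l) + J_{14}\varphi_{22}(\l),
\end{equation*}
with $\varphi_{jk}(\l) = \varphi_{jk}(1,\l)$. First I would plug the representation~\eqref{eq:phijk} at $x=1$ into this formula. The diagonal parts $\delta_{jk}e^{ib_k\l}$ of $\varphi_{11}$ and $\varphi_{22}$ reassemble, together with the $J_{12}$ and $J_{34}e^{i(b_1+b_2)\l}$ terms, precisely into $\Delta_0(\l)$ as given by~\eqref{eq:Delta0}; the off-diagonal $\varphi_{12}, \varphi_{21}$ contribute no diagonal part. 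What remains is a sum of integrals of the form $\int_0^1 R_{jkh}(1,t)\,e^{ib_h\l t}\,dt$, each multiplied by the appropriate constant $J_{jk}$.

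Next I would group these leftover integrals according to the exponential $e^{ib_1\l t}$ or $e^{ib_2\l t}$ they carry. Define
\begin{equation*}
 g_1(t) := J_{32}R_{111}(1,t) + J_{13}R_{121}(1,t) + J_{42}R_{211}(1,t) + J_{14}R_{221}(1,t),
\end{equation*}
and analogously $g_2(t)$ with the index $h=2$ in place of $h=1$. Then $\Delta(\l) = \Delta_0(\l) + \int_0^1 g_1(t)e^{ib_1\l t}\,dt + \int_0^1 g_2(t)e^{ib_2\l t}\,dt$, which is exactly~\eqref{eq:Delta=Delta0+}. Since each $R_{jkh}(\cdot,\cdot) \in C^\infty(\Omega)$ by Lemma~\ref{prop:phi.jk=e+int}, the restrictions $t \mapsto R_{jkh}(1,t)$ lie in $C^\infty[0,1]$, hence $g_1, g_2 \in C^\infty[0,1]$; and $\Delta$ is entire because each integral is entire in $\l$ (differentiation under the integral sign is justified by the continuity of the integrands in $(t,\l)$ and local uniform boundedness).

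For the norm estimate~\eqref{eq:gj.in.C}, I would simply apply the triangle inequality to the definition of $g_j$: $\|g_j\|_{C[0,1]} \le (|J_{32}| + |J_{13}| + |J_{42}| + |J_{14}|) \max_{j,k,h}\|R_{jkh}\|_{C(\Omega)}$, and then invoke the bound~\eqref{eq:Rjkh}, absorbing the fixed constant $|J_{32}|+|J_{13}|+|J_{42}|+|J_{14}|$ (which depends only on the boundary coefficients $a_{jk}$, not on $Q$) into $C_0$. This gives $\|g_j\|_{C[0,1]} \le C_0\|Q\|\exp(C_1\|Q\|)$ with possibly enlarged $C_0$, same $C_1$.

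This proof is essentially bookkeeping; there is no real obstacle. The only point requiring minor care is verifying that the diagonal terms recombine correctly into $\Delta_0$ and that no exponential of the form $e^{ib_1\l}$ or $e^{ib_2\l}$ with a free constant coefficient is left over — i.e., that the purely exponential (non-integral) part of $\Delta$ is exactly $\Delta_0$ and the rest is genuinely of integral type. This follows because $\varphi_{12}, \varphi_{21}$ have no $\delta_{jk}$ term, so their entire contribution is already in integral form, while the $\delta_{jk}e^{ib_k\l}$ parts of $\varphi_{11}, \varphi_{22}$ supply exactly the $J_{32}e^{ib_1\l}$ and $J_{14}e^{ib_2\l}$ summands of $\Delta_0$.
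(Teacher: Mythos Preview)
Your proof is correct and follows exactly the same approach as the paper: substitute the representations~\eqref{eq:phijk} at $x=1$ into~\eqref{eq:Delta}, observe that the $\delta_{jk}e^{ib_k\l}$ terms reconstitute $\Delta_0(\l)$, and note that the $g_j$ are linear combinations of the $R_{jkh}(1,\cdot)$ so that~\eqref{eq:gj.in.C} follows from~\eqref{eq:Rjkh}. The paper's proof is in fact terser than yours, omitting the explicit formula for $g_j$ and the discussion of entireness, but the underlying argument is identical.
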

%
%
\begin{proof}
Inserting formulas~\eqref{eq:phijk} with $x=1$ into~\eqref{eq:Delta} and taking
expression~\eqref{eq:Delta0} for $\Delta_0(\cdot)$ into account, we arrive at
the expression~\eqref{eq:Delta=Delta0+} for $\Delta(\cdot)$.

Since $g_j(\cdot)$, $j \in \{1,2\}$, are linear combinations of the functions
$R_{jkh}(1,\cdot)$, $j,k,h \in \{1,2\}$, estimates~\eqref{eq:gj.in.C} easily
follow from estimates~\eqref{eq:Rjkh}.
\end{proof}
In the remaining part of the section we will study operator $L_{U_1,U_2}(B,Q)$
subject to the following "quasi-periodic" boundary conditions
\begin{equation} \label{eq:riesz.U1.U2.d1.d2}
 U_1(y) = y_1(0) - d_1 y_1(1) = 0, \quad
 U_2(y) = y_2(0) - d_2 y_2(1) = 0, \quad d_1, d_2 \in \bC \setminus \{0\}.
\end{equation}
First we study characteristic determinant $\Delta_0(\l)$ of the operator
$L_{U_1,U_2}(B,Q)$. Let us recall the following definition.
%
%
\begin{definition} \label{def:sequences}
\textbf{(i)} A sequence $\Lambda := \{\l_n\}_{n \in \bZ}$ of complex numbers is
said to be \textbf{separated} if for some positive $\delta > 0$,
\begin{equation} \label{eq:separ_cond}
 |\l_j - \l_k| > 2 \delta \quad \text{whenever}\quad j \ne k.
\end{equation}
In particular, all entries of a separated sequence are distinct.

\textbf{(ii)} The sequence $\Lambda$ is said to be \textbf{asymptotically
separated} if for some $n_0 \in \bN$ the subsequence $\Lambda_{n_0} :=
\{\l_n\}_{|n| > n_0}$ is separated.
\end{definition}
%
%
\begin{lemma} \label{lem:delta0}
Let $\Delta_0(\cdot)$ be the characteristic determinant of the problem
\eqref{eq:riesz.system},~\eqref{eq:riesz.U1.U2.d1.d2} with $Q = 0$, and let $b_1
b_2^{-1} \notin \bR$. Let also
\begin{equation} \label{eq:dj=e.gammaj}
 d_j = e^{-2 \pi i \gamma_j}, \quad \gamma_j = \alpha_j + i \beta_j,
 \quad \alpha_j \in [0,1), \quad \beta_j \in \bR, \quad j \in \{1,2\}.
\end{equation}
Then the following statements hold:
\begin{itemize}
\item[(i)] The sequence of zeros $\Lambda_0$ of $\Delta_0(\cdot)$ counting
multiplicity is of the form
\begin{equation} \label{eq:Lambda0}
 \Lambda_0 = \{\l_{nj}^0\}_{n \in \bZ, \, j \in \{1,2\}}, \quad
 \l_{nj}^0 = 2 \pi b_j^{-1} (\gamma_j + n), \quad n \in \bZ, \ j \in \{1,2\},
\end{equation}
In particular, the sequence $\Lambda_0$ is asymptotically separated and all
its entries but possibly one are simple.
\item[(ii)] Let $b_1 b_2^{-1} = c_1 + i c_2$, $c_1, c_2 \in \bR$ (note that
$c_2 \ne 0$). The sequence $\Lambda_0$ is separated if and only if
\begin{equation} \label{eq:aj.ne.bc}
 \alpha_1 \ne \left\{\frac{c_1 \beta_1 - (c_1^2 + c_2^2) \beta_2}{c_2}\right\}
 \quad\text{or}\quad \alpha_2 \ne \left\{\frac{\beta_1 - c_1 \beta_2}{c_2}\right\},
\end{equation}
where $\{x\}$ denotes the fractional part of $x \in \bR$.
\item[(iii)] For any $\eps>0$ there exists $\wt{C}_{\eps}>0$ such that
\begin{equation} \label{eq:Delta0>C.exp1.exp2}
 |\Delta_0(\l)| > \wt{C}_{\eps} \bigl(e^{-\Im(b_1 \l)} + 1\bigr)
 \bigl(e^{-\Im(b_2 \l)} + 1\bigr),
 \quad \l \in \bC \setminus \Omega_{\eps},
\end{equation}
where $\Omega_{\eps} := \bigcup_{\l_0 \in \Lambda_0} \bD_{\eps}(\l_0)$.
\end{itemize}
\end{lemma}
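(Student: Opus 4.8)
With $Q=0$ we have $\varphi_{12}=\varphi_{21}\equiv 0$ and $\varphi_{11}(1,\l)=e^{ib_1\l}$, $\varphi_{22}(1,\l)=e^{ib_2\l}$, so the quasi-periodic boundary conditions~\eqref{eq:riesz.U1.U2.d1.d2} give $A_{12}=I_2$, $A_{34}=\diag(-d_1,-d_2)$, hence $J_{12}=1$, $J_{34}=d_1d_2$, $J_{32}=-d_1$, $J_{14}=-d_2$ and the remaining $J_{jk}$ vanish. Substituting into~\eqref{eq:Delta0} yields the fully explicit product form
\begin{equation*}
 \Delta_0(\l)=\bigl(1-d_1e^{ib_1\l}\bigr)\bigl(1-d_2e^{ib_2\l}\bigr).
\end{equation*}
This factorization is the engine behind all three parts, so the first step is to record it. The plan is then to read off the zero set from the two factors, analyze separation from the location of the zeros, and prove the lower bound~\eqref{eq:Delta0>C.exp1.exp2} factor by factor.

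\textbf{Part (i).} The zeros of $1-d_je^{ib_j\l}$ are exactly the $\l$ with $e^{ib_j\l}=d_j^{-1}=e^{2\pi i\gamma_j}$, i.e. $ib_j\l=2\pi i(\gamma_j+n)$ for $n\in\bZ$, which gives $\l_{nj}^0=2\pi b_j^{-1}(\gamma_j+n)$ as claimed in~\eqref{eq:Lambda0}. Since the two factors are distinct affine-exponential functions, their zero sequences are each simple (the derivative of $1-d_je^{ib_j\l}$ never vanishes where the function does), so multiplicities can only arise from a coincidence $\l_{n1}^0=\l_{m2}^0$ for some $n,m$; because $b_1b_2^{-1}\notin\bR$ the two arithmetic progressions $\{2\pi b_1^{-1}(\gamma_1+n)\}$ and $\{2\pi b_2^{-1}(\gamma_2+m)\}$ lie on lines with different (complex) directions, so they meet in at most one point — giving "all entries but possibly one are simple." Asymptotic separation is immediate: within each family consecutive points differ by $2\pi b_j^{-1}$, a fixed nonzero vector, and the two families, lying on transversal lines, can only come close finitely often, so removing a large enough central block leaves a separated sequence.

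\textbf{Parts (ii) and (iii).} For (ii), separation of the \emph{whole} sequence fails precisely when some $\l_{n1}^0$ equals some $\l_{m2}^0$ (a double zero, violating~\eqref{eq:separ_cond} trivially) or when two points coincide. Setting $2\pi b_1^{-1}(\gamma_1+n)=2\pi b_2^{-1}(\gamma_2+m)$ and writing $b_1b_2^{-1}=c_1+ic_2$, one gets $(c_1+ic_2)(\gamma_1+n)=\gamma_2+m$; separating real and imaginary parts with $\gamma_j=\alpha_j+i\beta_j$ and using that $n,m\in\bZ$ are free, a solution exists iff both congruences $\alpha_1\equiv \frac{c_1\beta_1-(c_1^2+c_2^2)\beta_2}{c_2}\pmod 1$ and $\alpha_2\equiv\frac{\beta_1-c_1\beta_2}{c_2}\pmod 1$ hold; negating this gives exactly~\eqref{eq:aj.ne.bc}. (One should check these are the \emph{only} obstructions to separation — that distinct points of a single transversal family stay uniformly apart, which follows from the fixed step vector.) For (iii), fix $\eps>0$. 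Outside $\bD_\eps(\l_{nj}^0)$ for all $n$, each factor satisfies a lower bound of the desired shape: a standard estimate for $1-d_je^{ib_j\l}$ gives $|1-d_je^{ib_j\l}|\ge c_\eps\bigl(|d_je^{ib_j\l}|+1\bigr)\gtrsim c_\eps\bigl(e^{-\Im(b_j\l)}+1\bigr)$ once $\l$ stays $\eps$-away from the zeros of that factor (distinguish the regimes $|d_je^{ib_j\l}|\le\frac12$, where the factor is $\ge\frac12$, from $|d_je^{ib_j\l}|\ge\frac12$, where periodicity plus the $\eps$-separation from zeros bounds it below by a constant times $|d_je^{ib_j\l}|$). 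Multiplying the two factor bounds yields~\eqref{eq:Delta0>C.exp1.exp2}.

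\textbf{Main obstacle.} The only genuinely delicate point is part (ii): one must be careful that separation can fail \emph{only} through an exact coincidence of two zeros and not through an accumulation of near-coincidences between the two transversal families. Since $b_1b_2^{-1}\notin\bR$, the set of differences $\{2\pi b_1^{-1}(\gamma_1+n)-2\pi b_2^{-1}(\gamma_2+m):n,m\in\bZ\}$ is a rank-two lattice in $\bC\cong\bR^2$, hence discrete, so $0$ is either attained or isolated among the differences — this discreteness is what rules out arbitrarily small gaps and must be invoked explicitly. Parts (i) and (iii) are then routine once the product formula for $\Delta_0$ is in hand.
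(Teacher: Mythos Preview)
Your proposal is correct and follows essentially the same factorization-then-factor-by-factor approach as the paper; the only substantive addition is your explicit lattice/discreteness argument in~(ii), which the paper tacitly assumes when asserting that separation fails exactly when the two arithmetic progressions share an entry. One slip to fix: from $b_1^{-1}(\gamma_1+n)=b_2^{-1}(\gamma_2+m)$ the correct rearrangement is $\gamma_1+n=(c_1+ic_2)(\gamma_2+m)$, not $(c_1+ic_2)(\gamma_1+n)=\gamma_2+m$ as you wrote---your stated final congruences are nevertheless the right ones, so this is only an inconsistency in the intermediate step.
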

%
%
\begin{proof}
\textbf{(i)} It follows from~\eqref{eq:Delta0} and~\eqref{eq:dj=e.gammaj} that the
characteristic determinant $\Delta_0(\cdot)$ of the problem~\eqref{eq:riesz.system},
\eqref{eq:riesz.U1.U2.d1.d2} (with $Q = 0$) is given by
\begin{align}
\label{eq:Delta0=Delta01.Delta02}
 \Delta_0(\l) &= 1 + d_1 d_2 e^{i (b_1 + b_2) \l}
 - d_1 e^{i b_1 \l} - d_2 e^{i b_2 \l} \notag \\
 &= (d_1 e^{i b_1 \l} - 1) (d_2 e^{i b_2 \l} - 1)
 = \Delta_{01}(\l) \cdot \Delta_{02}(\l), \\
\label{eq:Delta0j.def}
 \Delta_{0j}(\l) &:= e^{i (b_j \l - 2 \pi \gamma_j)} - 1, \quad j \in \{1,2\}.
\end{align}
It is clear that the sequence $\Lambda_{0j} := \{\l_{nj}^0\}_{n \in \bZ}$ is the
sequence of the zeros of $\Delta_{0j}(\cdot)$, $j \in \{1, 2\}$. Hence
formulas~\eqref{eq:Lambda0} for zeros of $\Delta_0(\cdot)$ are immediate from
factorization~\eqref{eq:Delta0=Delta01.Delta02}.

\textbf{(ii)} The sequences $\Lambda_{0j} := \{\l_{nj}^0\}_{n \in \bZ}$, $j \in
\{1,2\}$, form two arithmetic progressioms. Hence the sequence $\Lambda_0$ is
separated if and only if the sequences $\Lambda_{01}$ and $\Lambda_{02}$ does
not have common entries. This in turn is reduced to solving the following
Diophantine equation
\begin{equation} \label{eq:ln1=lm2}
 \l_{n1}^0 = 2 \pi b_1^{-1} (\gamma_1 + n)
 = 2 \pi b_2^{-1} (\gamma_2 + m) = \l_{m2}^0, \quad n,m \in \bZ.
\end{equation}
Inserting formula $b_1 b_2^{-1} = c_1 + i c_2$ and representations
\eqref{eq:dj=e.gammaj} for $\gamma_1$, $\gamma_2$, in~\eqref{eq:ln1=lm2}, one
reduces this equation to
\begin{equation} \label{eq:a1n=c.a2m}
 \alpha_1 + i \beta_1 + n = (c_1 + i c_2) (\alpha_2 + i \beta_2 + m).
\end{equation}
Separating real and imaginary parts in~\eqref{eq:a1n=c.a2m} we arrive at the
system
\begin{equation} \label{eq:a1n.b1.system}
 \begin{cases}
 \alpha_1 + n = c_1 \alpha_2 - c_2 \beta_2 + c_1 m, \\
 \beta_1 = c_2 \alpha_2 + c_1 \beta_2 + c_2 m.
 \end{cases}
\end{equation}
Since $c_2 \ne 0$, the solution to this system is given by
\begin{equation} \label{eq:m.n.system}
 \begin{cases}
 m = \frac{\beta_1 - c_1 \beta_2}{c_2} - \alpha_2, \\
 n = \frac{c_1 \beta_1 - (c_1^2 + c_2^2) \beta_2}{c_2} - \alpha_1.
 \end{cases}
\end{equation}
Since $\alpha_1, \alpha_2 \in [0,1)$, both $m$ and $n$ in~\eqref{eq:m.n.system}
are integers if and only if both conditions~\eqref{eq:aj.ne.bc} are violated.
Hence Diophantine equation~\eqref{eq:ln1=lm2} does not have integer solutions if
and only if condition~\eqref{eq:aj.ne.bc} holds. This completes the proof.

\textbf{(iii)} Let $\Omega_{\eps}^0 := \bigcup_{n \in \bZ} \bD_{\eps}(\pi n)$.
We need the following well known estimate from below (see
e.g.~\cite[Theorem~I.5.7]{Leont83})
\begin{equation} \label{eq:sin.z>A.exp}
 |\sin z| > A_{\eps} \cdot e^{|\Im z|} > 2^{-1}A_{\eps} \cdot
 (e^{\Im z} + e^{-\Im z}), \quad z \in \bC \setminus \Omega_{\eps}^0.
\end{equation}
for some $A_{\eps} > 0$. Since $2 |\sin z| = |e^{2 i z} - 1| \cdot e^{\Im z}$,
it follows from~\eqref{eq:sin.z>A.exp} that
\begin{equation} \label{eq:exp.z-1>A.exp}
 |e^{2 i z} - 1| > A_{\eps} \cdot (e^{-2\Im z} + 1),
 \quad z \in \bC \setminus \Omega_{\eps}^0,
\end{equation}
Since the sequence $\Lambda_{0j} := \{\l_{nj}^0\}_{n \in \bZ}$ is the sequence
of zeros of $\Delta_{0j}(\cdot)$, estimate~\eqref{eq:exp.z-1>A.exp} yields the
following estimate from below on $|\Delta_{0j}(\cdot)|:$
\begin{align} \label{eq:Delta0j>A.exp}
 |\Delta_{0j}(\l)| &= \bigl|e^{i (b_j \l - 2 \pi \gamma_j)} - 1\bigr| > A_{\eps}
 \cdot \bigl(e^{\Im (2 \pi \gamma_j)} \cdot e^{-\Im (b_j \l)} + 1\bigr) \notag \\
 & > A_{\eps j} \cdot \bigl(e^{-\Im (b_j \l)} + 1\bigr),
 \quad \l \in \bC \setminus \bigcup_{n \in \bZ} \bD_{\eps}(\l_{nj}^0),
 \quad j \in \{1,2\},
\end{align}
for some $A_{\eps j} > 0$, $j \in \{1,2\}$. Since the sequence $\Lambda_{0j}$ is
the subsequence of $\Lambda_{0}$ for $j \in \{1,2\}$,
estimate~\eqref{eq:Delta0j>A.exp} holds automatically for $\l \in \bC \setminus
\Omega_{\eps}$. Inserting inequalities~\eqref{eq:Delta0j>A.exp} into
factorization identity~\eqref{eq:Delta0=Delta01.Delta02} yields the desired
estimate~\eqref{eq:Delta0>C.exp1.exp2} with $\wt{C}_{\eps} = A_{\eps 1}
A_{\eps 2}$.
\end{proof}
The following estimate will be of importance below.
%
%
\begin{corollary} \label{cor:Delta0.estim}
Let $\Delta_0(\cdot)$ and $\Omega_{\eps}$ be as in Lemma \ref{lem:delta0}. Then
for any $\eps>0$ there exists $C_{\eps}>0$ such that
\begin{equation} \label{eq:Delta0>C.exp}
 |\Delta_0(\l)| > C_{\eps} \bigl(e^{-\Im(b_1 \l)} + e^{-\Im(b_2 \l)} + 2\bigr),
 \quad \l \in \bC \setminus \Omega_{\eps}.
\end{equation}
\end{corollary}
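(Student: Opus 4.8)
The plan is to derive \eqref{eq:Delta0>C.exp} directly from the already established lower bound \eqref{eq:Delta0>C.exp1.exp2} of Lemma~\ref{lem:delta0}(iii) by a purely elementary estimate on the product of the two exponential factors, so no new analysis of $\Delta_0$ is needed.

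Concretely, fix $\eps>0$ and take $\wt C_\eps>0$ as in \eqref{eq:Delta0>C.exp1.exp2}. For $\l\in\bC\setminus\Omega_\eps$ set $a:=e^{-\Im(b_1\l)}>0$ and $b:=e^{-\Im(b_2\l)}>0$. I would then expand $(a+1)(b+1)=ab+a+b+1\ge a+b+1$, and note that $a+b+1\ge\tfrac12(a+b+2)$ since this is equivalent to $a+b\ge0$, which holds trivially. Hence $(a+1)(b+1)\ge\tfrac12\bigl(e^{-\Im(b_1\l)}+e^{-\Im(b_2\l)}+2\bigr)$ for all such $\l$. Plugging this into \eqref{eq:Delta0>C.exp1.exp2} yields
\[
 |\Delta_0(\l)| > \wt C_\eps (a+1)(b+1)
 \ge \tfrac12 \wt C_\eps \bigl(e^{-\Im(b_1\l)}+e^{-\Im(b_2\l)}+2\bigr),
 \quad \l\in\bC\setminus\Omega_\eps,
\]
so \eqref{eq:Delta0>C.exp} holds with $C_\eps:=\wt C_\eps/2$. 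There is no real obstacle here: the only "content" is invoking Lemma~\ref{lem:delta0}(iii), and the rest is the one-line bound $(a+1)(b+1)\ge\tfrac12(a+b+2)$ for $a,b\ge0$.
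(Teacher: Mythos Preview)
Your proposal is correct and follows exactly the paper's approach: the paper's proof simply states that the estimate is immediate from \eqref{eq:Delta0>C.exp1.exp2} with $C_{\eps} = \wt{C}_{\eps}/2$, and you have merely spelled out the elementary inequality $(a+1)(b+1)\ge \tfrac12(a+b+2)$ that makes this immediate.
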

%
%
\begin{proof}
This estimate is immediate from~\eqref{eq:Delta0>C.exp1.exp2} with $C_{\eps} =
\wt{C}_{\eps}/2$.
\end{proof}
In what follows we need the following version of the Riemann-Lebesgue lemma (see
e.g.~\cite[Lemma 3.5]{LunMal16JMAA}).
%
%
\begin{lemma} \label{lem:RimLeb}
Let $g \in L^1[0,1]$ and $c \in \bC \setminus \{0\}$. Then for any $\delta > 0$
there exists $M_{\delta} > 0$ such that
\begin{equation} \label{eq:int.g.e<delta}
 \left|\int_0^1 g(t) e^{i c \l t} dt \right| < \delta (e^{-\Im(c \l)} + 1),
 \quad |\l| > M_{\delta}.
\end{equation}
\end{lemma}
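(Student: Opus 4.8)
The plan is to reduce the estimate to the classical Riemann--Lebesgue lemma by absorbing the imaginary part of the exponent into the magnitude. Write $c\l = \xi + i\eta$ with $\xi = \Re(c\l)$, $\eta = \Im(c\l)$, so that $|e^{ic\l t}| = e^{-\eta t}$ for $t \in [0,1]$. First I would split into the two cases according to the sign of $\eta$. If $\eta \le 0$, then $e^{-\eta t} \le e^{-\eta} \le e^{-\eta} + 1$ uniformly for $t \in [0,1]$, so trivially
\begin{equation*}
 \left|\int_0^1 g(t) e^{ic\l t}\,dt\right|
 \le \int_0^1 |g(t)|\, e^{-\eta t}\,dt
 \le \|g\|_{L^1}\bigl(e^{-\eta} + 1\bigr),
\end{equation*}
which is of the right shape but lacks the factor $\delta$; to recover it one instead uses the full Riemann--Lebesgue decay in the oscillatory variable, as explained below. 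If $\eta \ge 0$, then $e^{-\eta t} \le 1 \le e^{-\eta} + 1$, so again the integral is bounded by $\|g\|_{L^1}(e^{-\eta}+1)$ up to a constant. So in both cases the exponential weight is controlled; the remaining issue is purely the $\delta$-smallness.

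The key point is the following: set $h(t) := g(t) e^{-\eta t}$ (with $\eta$ depending on $\l$) — but this depends on $\l$, which is awkward, so instead I would argue more carefully by first handling the case $\eta \ge 0$ and the case $\eta \le 0$ with a reflection. When $\eta \ge 0$ we have $|e^{ic\l t}| \le 1$, and the integral equals $\int_0^1 g(t) e^{i\xi t} e^{-\eta t}\,dt$. Here one would like to say this is small by Riemann--Lebesgue in $\xi$ uniformly in $\eta \ge 0$; this follows from the standard fact that $\{g(\cdot)e^{-\eta\cdot} : \eta \ge 0\}$ is a bounded, and in fact (by dominated convergence) relatively compact, subset of $L^1[0,1]$, so the Riemann--Lebesgue decay is uniform over it. Precisely: approximate $g$ in $L^1$ by a $C^1$ function $g_\eps$ with $\|g - g_\eps\|_{L^1} < \delta/4$; then $\|(g - g_\eps)e^{-\eta\cdot}\|_{L^1} < \delta/4$ for all $\eta \ge 0$, while for the smooth remainder integration by parts in $t$ gives a bound $O(1/|\xi|)$ with a constant depending only on $g_\eps$, hence $< \delta/4$ once $|\xi|$, and therefore $|\l|$, is large — note here one must treat separately the region where $|\xi|$ is small but $|\l|$ is large, which forces $|\eta|$ large and is handled by the weight already being $\ge 1$; actually it is cleaner to observe that $|\l|$ large with $\eta$ bounded forces $|\xi|$ large since $c \ne 0$. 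When $\eta \le 0$, substitute $t \mapsto 1-t$: then $\int_0^1 g(t)e^{ic\l t}dt = e^{ic\l}\int_0^1 g(1-s)e^{-ic\l s}ds$, the exponential $e^{ic\l}$ contributes exactly the factor $e^{-\eta} = e^{|\eta|}$, and the new integral has $\Im(-c\l) = -\eta \ge 0$, reducing to the previous case applied to $g(1-\cdot)$.

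The main obstacle is the uniformity of the Riemann--Lebesgue decay across the one-parameter family of weights $e^{-\eta t}$ and the bookkeeping of the region of $\l$-space where $\Re(c\l)$ is not large; both are routine once one fixes the case split by the sign of $\Im(c\l)$ and uses the reflection $t \mapsto 1-t$ to reduce to $\Im(c\l)\ge 0$, so I expect the proof to be short. Since the paper cites \cite[Lemma 3.5]{LunMal16JMAA} for this statement, I would in fact simply reproduce that argument, or invoke it directly, rather than redeveloping it from scratch.
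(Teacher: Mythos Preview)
The paper does not give a proof of this lemma at all: it merely cites \cite[Lemma~3.5]{LunMal16JMAA}. So your closing remark --- to invoke that reference directly --- is exactly what the paper does.

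Your sketch itself is sound in outline (the reflection $t\mapsto 1-t$ to reduce $\Im(c\l)<0$ to $\Im(c\l)\ge 0$ is the standard move), but one point is muddled. In the regime $\eta := \Im(c\l)\ge 0$ you need the integral to be $<\delta$, since then $e^{-\eta}+1\ge 1$; saying it is ``handled by the weight already being $\ge 1$'' is backwards. The issue is the sub-case where $|\l|$ is large but $|\xi| := |\Re(c\l)|$ stays bounded, which forces $\eta\to +\infty$. Your integration-by-parts bound $O(1/|\xi|)$ does nothing there, and your corrective remark (``$|\l|$ large with $\eta$ bounded forces $|\xi|$ large'') simply avoids this sub-case rather than treating it. The fix is one line: for the smooth approximant $g_\eps$,
\[
\Bigl|\int_0^1 g_\eps(t)e^{-\eta t}e^{i\xi t}\,dt\Bigr|
\le \|g_\eps\|_\infty\int_0^1 e^{-\eta t}\,dt
\le \frac{\|g_\eps\|_\infty}{\eta},
\]
which is $<\delta/4$ once $\eta$ is large. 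Equivalently, your relative-compactness observation can be sharpened: the map $\eta\mapsto g\,e^{-\eta\cdot}$ extends continuously to $[0,\infty]$ in $L^1[0,1]$ (with value $0$ at $\infty$), and the Fourier transform carries this compact set into a compact subset of $C_0(\bR)$, giving uniform decay in $\xi$ \emph{and} smallness at $\eta=\infty$ simultaneously. With this patch, your argument is complete.
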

%
%
The following asymptotic formula for the eigenvalues of the problem
\eqref{eq:riesz.system},~\eqref{eq:riesz.BQ},~\eqref{eq:riesz.U1.U2.d1.d2} plays
a crucial role in the study of Riesz basis property.
%
%
\begin{proposition} \label{prop:Lambda.asymp}
Let $\Delta(\cdot)$ be the characteristic determinant of the problem
\eqref{eq:riesz.system}, \eqref{eq:riesz.BQ}, \eqref{eq:riesz.U1.U2.d1.d2} and
let $\Lambda_0 = \{\l_{nj}^0\}_{n \in \bZ, \, j \in \{1,2\}}$, be a sequence
given by~\eqref{eq:Lambda0}. Then the following statements hold:
\begin{itemize}
\item[(i)] The sequence $\Lambda$ of its zeros can be ordered as $\Lambda =
\{\l_{nj}\}_{n \in \bZ, j \in \{1,2\}}$ in such a way that the following
asymptotic formula holds
\begin{equation} \label{eq:l.n=l.n0+o(1)}
 \l_{nj} = \l_{nj}^0 + o(1), \quad\text{as}\quad n \to \infty, \quad n \in \bZ,
 \quad j \in \{1,2\}.
\end{equation}
In particular, the sequence $\Lambda$ is asymptotically separated.
\item[(ii)] Let in addition condition~\eqref{eq:aj.ne.bc} be satisfied. Then
there exists a constant $C > 0$ such that the sequence $\Lambda$ is separated
whenever $\|Q\|_{C[0,1]} < C$.
\end{itemize}
\end{proposition}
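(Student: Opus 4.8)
The plan is to treat $\Delta(\cdot)$ as a perturbation of $\Delta_0(\cdot)$ and apply a Rouché-type argument on suitable contours, using the lower bound for $\Delta_0$ from Corollary~\ref{cor:Delta0.estim} against the Riemann--Lebesgue-type smallness of the correction terms from Lemma~\ref{lem:Delta=Delta0+} and Lemma~\ref{lem:RimLeb}. First I would record that by Lemma~\ref{lem:Delta=Delta0+} we may write $\Delta(\l) = \Delta_0(\l) + R(\l)$, where $R(\l) = \int_0^1 g_1(t) e^{i b_1 \l t}\,dt + \int_0^1 g_2(t) e^{i b_2 \l t}\,dt$ with $g_j \in C^\infty[0,1]$. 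Applying Lemma~\ref{lem:RimLeb} to each integral with $c = b_1$ and $c = b_2$, for every $\delta > 0$ there is $M_\delta$ such that $|R(\l)| < \delta\bigl(e^{-\Im(b_1\l)} + e^{-\Im(b_2\l)} + 2\bigr)$ for $|\l| > M_\delta$. Comparing with the estimate $|\Delta_0(\l)| > C_\eps\bigl(e^{-\Im(b_1\l)} + e^{-\Im(b_2\l)} + 2\bigr)$ valid for $\l \in \bC \setminus \Omega_\eps$ (Corollary~\ref{cor:Delta0.estim}), and choosing $\delta < C_\eps$, we obtain $|R(\l)| < |\Delta_0(\l)|$ on the boundary $\partial\Omega_\eps$ outside the disc $\bD_{M_\delta}$.

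For part (i): fix a small $\eps > 0$. Since $\Lambda_0$ is asymptotically separated (Lemma~\ref{lem:delta0}(i)), for $|n|$ large the discs $\bD_\eps(\l_{nj}^0)$ are pairwise disjoint and disjoint from $\bD_{M_\delta}$. On each such circle $\partial \bD_\eps(\l_{nj}^0)$ we have just shown $|R(\l)| < |\Delta_0(\l)|$, so by Rouché's theorem $\Delta$ and $\Delta_0$ have the same number of zeros (counting multiplicity) inside $\bD_\eps(\l_{nj}^0)$, namely exactly one for all but possibly one exceptional index. This already gives that the zeros of $\Delta$ outside $\bD_{M_\delta}$ lie one per disc $\bD_\eps(\l_{nj}^0)$; labelling that zero $\l_{nj}$ yields $|\l_{nj} - \l_{nj}^0| < \eps$ for all large $|n|$. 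Since $\eps > 0$ was arbitrary, letting $\eps \to 0$ along a sequence (and enlarging the threshold $M_\delta$ correspondingly) produces the asymptotics $\l_{nj} = \l_{nj}^0 + o(1)$. The finitely many remaining zeros inside $\bD_{M_\delta}$ are absorbed into the finite part of the enumeration, and asymptotic separation of $\Lambda$ follows since $\Lambda_0$ is asymptotically separated and the perturbation is $o(1)$.

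For part (ii): under condition~\eqref{eq:aj.ne.bc}, Lemma~\ref{lem:delta0}(ii) gives that $\Lambda_0$ is separated, say $|\l_{nj}^0 - \l_{mk}^0| > 2\delta_0$ for $(n,j) \ne (m,k)$, with $\delta_0$ depending only on $b_1, b_2, d_1, d_2$. Then it suffices to push the Rouché argument down to \emph{all} indices rather than just asymptotically: I would use the more quantitative estimate~\eqref{eq:est-mate_for_K_jk}--\eqref{eq:Rjkh}, which shows $\|g_j\|_{C[0,1]} \le C_0\|Q\|\exp(C_1\|Q\|)$, hence $|R(\l)| \le 2 C_0 \|Q\| e^{C_1\|Q\|}\bigl(e^{-\Im(b_1\l)} + e^{-\Im(b_2\l)} + 2\bigr)$ on the whole plane. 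Taking $\eps = \delta_0/2$ and choosing $C > 0$ small enough that $\|Q\|_{C[0,1]} < C$ forces $2 C_0 \|Q\| e^{C_1\|Q\|} < C_{\delta_0/2}$, we get $|R(\l)| < |\Delta_0(\l)|$ on every circle $\partial \bD_{\delta_0/2}(\l_{nj}^0)$ simultaneously, for all $n \in \bZ$ and $j \in \{1,2\}$ (note these circles are pairwise disjoint because $\Lambda_0$ is separated with constant $2\delta_0$). Rouché then places exactly one zero of $\Delta$ in each such disc, and these discs are $\delta_0$-separated, so $\Lambda$ is separated (with the same constant structure, $\delta = \delta_0/2$).

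The main obstacle is the uniformity over \emph{all} indices in part (ii): the plain Riemann--Lebesgue bound of Lemma~\ref{lem:RimLeb} is only asymptotic and would not control small $\l$, so one genuinely needs the quantitative $C(\Omega)$-estimates on the transformation-operator kernels (hence on the $g_j$) together with the global lower bound for $|\Delta_0|$, and one must verify that the smallness threshold $C$ can be chosen independent of the index while keeping the exceptional (possibly double) zero of $\Delta_0$ from spoiling separation — this last point uses that condition~\eqref{eq:aj.ne.bc} is exactly what rules out a repeated zero, so under~\eqref{eq:aj.ne.bc} all discs are honestly simple and disjoint.
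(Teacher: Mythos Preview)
Your proposal is correct and follows essentially the same route as the paper: both arguments write $\Delta = \Delta_0 + R$, use Lemma~\ref{lem:RimLeb} (for part~(i)) and the quantitative estimate~\eqref{eq:gj.in.C} on $\|g_j\|_{C[0,1]}$ (for part~(ii)) to bound $|R|$ against the lower bound~\eqref{eq:Delta0>C.exp} for $|\Delta_0|$, and then apply Rouch\'e on the $\eps$-discs about $\Lambda_0$. The only cosmetic difference is that the paper phrases the Rouch\'e step in terms of connected components of $\wt{\Omega}_\eps = \bD_{R_\eps} \cup \Omega_\eps$ rather than individual circles, which automatically matches the zero count in the central disc as well.
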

%
%
\begin{proof}
\textbf{(i)} Let $\Delta_0(\cdot)$ be the characteristic determinant of the
problem \eqref{eq:riesz.system},~\eqref{eq:riesz.U1.U2.d1.d2} with $Q = 0$ and
let $\eps > 0$. Combining Lemma~\ref{lem:Delta=Delta0+} with Lemma~\ref{lem:RimLeb}
yields the following estimate for any $\delta > 0$,
\begin{equation} \label{eq:Delta-Delta0}
 |\Delta(\l) - \Delta_0(\l)| < \delta \bigl(e^{-\Im (b_1 \l)}
 + e^{-\Im (b_2 \l)} + 2\bigr), \quad |\l| > M_{\delta},
\end{equation}
with certain constant $M_{\delta} > 0$. By Lemma~\ref{lem:delta0}, (iii), there
exists a constant $C_{\eps} > 0$ such that the estimate~\eqref{eq:Delta0>C.exp}
holds. Combining estimates~\eqref{eq:Delta0>C.exp} and~\eqref{eq:Delta-Delta0}
with $\delta = C_{\eps}$ we arrive at the following important estimate
\begin{align} \label{eq:Delta-Delta0<Delta0}
 |\Delta(\l) - \Delta_0(\l)| < |\Delta_0(\l)|, \quad
 \l \in \bC \setminus \wt{\Omega}_{\eps}, \quad
 \wt{\Omega}_{\eps} := \bD_{R_{\eps}} \cup \Omega_{\eps}, \quad
 R_{\eps} := M_{C_{\eps}}.
\end{align}
Since $\Omega_{\eps} = \bigcup_{\l_0 \in \Lambda_0} \bD_{\eps}(\l_0)$, where
$\Lambda_0$ is a sequence of zeros of the determinant $\Delta_0(\cdot)$,
estimate~\eqref{eq:Delta-Delta0<Delta0} makes it possible to apply the classical
Rouche theorem. Its applicability ensures that all zeros of the determinant
$\Delta(\cdot)$ lie in the domain $\wt{\Omega}_{\eps}$. Moreover, each connected
component of $\wt{\Omega}_{\eps}$ contains the same number of zeros of
determinants $\Delta(\cdot)$ and $\Delta_0(\cdot)$ counting multiplicity. Since
in accordance with Lemma~\ref{lem:delta0},~(i), the sequence of zeros $\Lambda_0$
is asymptotically separated, it follows that for $\eps$ sufficiently small,
discs $\bD_{\eps}(\l_0)$, $\l_0 \in \Lambda_0$, $|\l_0| > N_0$, do not intersect
each other, where $N_0$ does not depend on $\eps$. Hence $\wt{\Omega}_{\eps}
\setminus \bD_{N_{\eps}}$, $N_{\eps} := \max\{R_{\eps}, N_0\}$, is a union of
disjoint disc parts $\bD_{\eps}(\l_0) \setminus \bD_{N_{\eps}}$, $\l_0 \in
\Lambda_0$, $|\l_0| > N_{\eps} - \eps$. Thus, each of these disc parts contains
exactly one (simple) zero of $\Delta(\cdot)$. Since $\eps > 0$ is arbitrary small,
the latter implies the desired asymptotic formula~\eqref{eq:l.n=l.n0+o(1)} and
also that the sequence $\Lambda$ is asymptotically separated.

\textbf{(ii)} By Lemma~\ref{lem:delta0},~(ii), condition~\eqref{eq:aj.ne.bc}
ensures that the sequence of zeros $\Lambda_0$ is separated. Let $\eps > 0$ be
such that all the discs $\bD_{\eps}(\l_0)$, $\l_0 \in \Lambda_0$, are disjoint.
By Corollary~\ref{cor:Delta0.estim}, there exists $C_{\eps} > 0$ such that
estimate from below~\eqref{eq:Delta0>C.exp} holds. Choose $C > 0$ so small that
$C_0 C \cdot \exp(C_1 C) < C_{\eps}$, where $C_0, C_1$ are the constants from
inequality~\eqref{eq:gj.in.C}. Assuming that $\|Q\| = \|Q\|_{C[0,1]} < C$ one
easily gets from~\eqref{eq:gj.in.C} that
\begin{equation} \label{eq:gjt<C}
 |g_j(t)| < C_0 C \cdot \exp(C_1 C) < C_{\eps}, \quad t \in [0,1],
 \quad j \in \{1, 2\}.
\end{equation}
Further, it is clear that
\begin{equation} \label{eq:ebjt<}
 \left|e^{i b_j \l t}\right| < e^{- \Im(b_j \l)} + 1, \quad t \in [0,1],
 \quad \l \in \bC, \quad j \in \{1, 2\}.
\end{equation}
Combining formula~\eqref{eq:Delta=Delta0+} with estimates~\eqref{eq:gjt<C},
\eqref{eq:ebjt<} and~\eqref{eq:Delta0>C.exp} we arrive at
\begin{align} \label{eq:Delta-Delta0.sep}
 |\Delta(\l) - \Delta_0(\l)|
 & \le \int^1_0 |g_1(t)| \cdot \left|e^{i b_1 \l t}\right| dt
 + \int^1_0 |g_2(t)| \cdot \left|e^{i b_2 \l t}\right| dt \notag \\
 & < C_{\eps} \bigl(e^{-\Im (b_1 \l)} + e^{-\Im (b_2 \l)} + 2\bigr)
  < |\Delta_0(\l)|,
 \quad \l \in \bC \setminus \Omega_{\eps}^0.
\end{align}
Since the discs $\bD_{\eps}(\l_0)$, $\l_0 \in \Lambda_0$, are disjoint, the
Rouche theorem now implies that each of these discs contains exactly one (simple)
zero of $\Delta(\cdot)$, which implies that the sequence $\Lambda$ is separated.
\end{proof}
Next we recall classical definitions of Riesz basisness and Riesz basisness with
parentheses (see e.g.~\cite{GohKre65} and~\cite{Markus86}).
%
%
\begin{definition} \label{def:basis}
(i) A sequence $\{f_k\}_{k=1}^{\infty}$ of vectors in $\fH$ is called a
\textbf{Riesz basis} if it admits a representation $f_k = T e_k$, $k \in \bN$,
where $\{e_k\}_{k=1}^{\infty}$ is an orthonormal basis in $\fH$ and $T : \fH \to
\fH$ is a bounded operator with bounded inverse.

(ii) A sequence of subspaces $\{\fH_k\}_{k=1}^{\infty}$ is called a \textbf{Riesz
basis of subspaces} in $\fH$ if there exists a complete sequence of mutually
orthogonal subspaces $\{\fH'_k\}_{k=1}^{\infty}$ and a bounded operator $T$ in
$\fH$ with bounded inverse such that $\fH_k = T \fH'_k$, $k \in \bN$.

(iii) A sequence $\{f_k\}_{k=1}^{\infty}$ of vectors in $\fH$ is called a
\textbf{Riesz basis with parentheses} if each its finite subsequence is linearly
independent, and there exists an increasing sequence $\{n_k\}_{k=0}^{\infty}
\subset \bN$ such that $n_0=1$ and the sequence $\fH_k := \Span\{f_j\}_
{j=n_{k-1}}^{n_k-1}$, forms a Riesz basis of subspaces in $\fH$. Subspaces
$\fH_k$ are called blocks.
\end{definition}
%
%
Note that if $A$ is an operator in $\fH$ with discrete spectrum, then the
property of its root vectors (eigenvectors) to form a Riesz basis with
parentheses (Riesz basis) in $\fH$ can be retranslated in terms of $A$ to be
close to a certain "good" operator.

To retranslate this property we recall that an eigenvalue $\l_0$ of an operator
$A$ is called algebraically simple if $\ker (A - \l_0) = \cR(\l_0, A)$, where
$\cR(\l_0, A)$ is the root subspace of $A$. It is equivalent to the fact that
$\l_0$ is a simple pole (i.e. pole of order one) of the resolvent $(A - \l)^{-1}$.
In finite dimensional case this means that all Jordan cells corresponding to
$\l_0$ are of size one.

In applications to BVP it is convenient to reformulate Riesz basis property
(with and without parentheses) of systems of root functions of operators with
discrete spectrum in terms of their similarity to certain subclasses of the
class of \emph{spectral operators}. Recall that an operator is called \emph{a
spectral operator} if it admits a countably additive (generally non-orthogonal)
resolution of identity defined on Borel subsets of complex plane.

Next we collect several definitions in a form most suitable for the purposes of
our paper.
%
%
\begin{definition} \label{def:spectral.operator}
(i) A bounded operator $N$ in $\fH$ is called \textbf{quasi-nilpotent} if
$\sigma(N) = \{0\}$.

(ii) A closed operator $S$ in $\fH$ is called an \textbf{operator of scalar type}
if it is similar to a normal operator.

(iii) A closed operator $A$ in $\fH$ will be called \textbf{almost normal} if it
admits an orthogonal decomposition $A = A_1 \oplus A_2$ where $A_1$ is finite
dimensional and $A_2$ is normal.
\end{definition}
%
%
Note that according to~\cite[Theorem~XVIII.2.28]{DunSchw71} the operator $T = S
+ N$, where $S$ is an operator of scalar type and $N$ is a quasi-nilpotent
operator that commutes with $S$, is spectral operator. Such a representation
\emph{characterizes bounded spectral operators}, while becomes false, in general,
for unbounded operators (see~\cite[Section~XVIII.2]{DunSchw71}).

Note also that the definition of a scalar type operator is given in accordance
with the Wermer theorem, see~\cite[Theorem XV.6.4]{DunSchw71}.
%
%
\begin{lemma} \label{lem:similarity_to_normal}
Let $A$ be a closed densely defined operator in $\fH$ with discrete spectrum.
Then

(i) For the operator $A$ to be similar to a normal operator in $\fH$ it is
necessary and sufficient that its eigenvalues are algebraically simple and its
system of eigenvectors $\{f_k\}_{k\in \bN}$ forms a Riesz basis in $\fH$.

(ii) For the operator $A$ to be similar to an almost normal operator in $\fH$
it is necessary and sufficient that all its eigenvalues but finitely many are
algebraically simple and its system of eigen- and associated vectors
$\{f_k\}_{k\in \bN}$ forms a Riesz basis in $\fH$.

(iii) The operator $A$ is similar to an orthogonal direct sum of finite dimensional
operators if and only if the system of its root vectors forms a Riesz basis with
parentheses. In particular, in this case $A$ is a spectral operator.
\end{lemma}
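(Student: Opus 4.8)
The plan is to deduce all three statements from the same underlying principle: an operator with discrete spectrum is similar to a model operator of a given structural type precisely when there is a bounded, boundedly invertible map intertwining its spectral projections with those of the model, and such a map exists exactly when the root-vector (or root-subspace) system has the appropriate basis property. For part (i), suppose first that $A = S^{-1} \wt{N} S$ with $\wt{N}$ normal and $S$ boundedly invertible. A normal operator with discrete spectrum has algebraically simple eigenvalues (its resolvent has only simple poles) and an orthonormal basis of eigenvectors $\{e_k\}$; then $f_k := S^{-1} e_k$ is a system of eigenvectors of $A$, it is complete (since $S^{-1}$ has dense range), and by Definition~\ref{def:basis}(i) it is a Riesz basis. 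Algebraic simplicity is preserved under similarity since $(A-\l)^{-1} = S^{-1}(\wt{N}-\l)^{-1} S$, so the pole orders coincide. Conversely, if the eigenvalues of $A$ are algebraically simple and its eigenvectors $\{f_k\}$ form a Riesz basis, write $f_k = T e_k$ for an orthonormal basis $\{e_k\}$ and $T$ boundedly invertible; then $T^{-1} A T$ has the $e_k$ as eigenvectors with the same eigenvalues $\l_k$, i.e. $T^{-1} A T = \diag(\l_k)$ in the basis $\{e_k\}$, which is normal. Care is needed to check that $T^{-1} A T$ really equals the diagonal operator on the natural domain, i.e. that $\dom(T^{-1}AT) = \{x : \sum |\l_k|^2 |(x,e_k)|^2 < \infty\}$; this follows because algebraic simplicity forces the resolvent $(A-\l)^{-1}$ to map onto the span closure with the eigenexpansion converging, and a Riesz basis expansion transfers square-summability.

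For part (ii), the argument is a finite-rank modification of part (i). If $A$ is similar to $A_1 \oplus A_2$ with $A_1$ finite dimensional and $A_2$ normal, then via the similarity one gets a Riesz basis of root vectors (eigenvectors from $A_2$ together with the finitely many eigen- and associated vectors from $A_1$), and all but finitely many eigenvalues — those coming from $A_2$ — are algebraically simple. Conversely, given that all but finitely many eigenvalues are algebraically simple with a Riesz basis of eigen- and associated vectors, split $\fH = \fH' \oplus \fH''$ where $\fH'$ is the (finite-dimensional) span of the root subspaces attached to the exceptional eigenvalues and $\fH''$ is the closed span of the remaining eigenvectors; the Riesz basis property guarantees this is a topological direct sum and that $A$ is reduced by it. On $\fH''$ one repeats part (i) to get similarity to a normal operator; on $\fH'$ every finite-dimensional operator is trivially ``finite dimensional'' in the sense of Definition~\ref{def:spectral.operator}(iii). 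Assembling the two similarities (and orthonormalizing the resulting decomposition) yields similarity to an almost normal operator.

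For part (iii), if $A$ is similar via $T$ to $\bigoplus_k B_k$ with each $B_k$ finite dimensional acting on $\fH'_k$, then the root subspaces of $A$ are $\fH_k := T \fH'_k$, which by Definition~\ref{def:basis}(ii) form a Riesz basis of subspaces; choosing a basis in each $\fH_k$ and concatenating gives a Riesz basis with parentheses of root vectors, with blocks $\fH_k$. That $A$ is then spectral follows from the remark after Definition~\ref{def:spectral.operator}: the resolution of identity is $E(\omega) = T\bigl(\bigoplus_{k : \sigma(B_k) \cap \omega \ne \emptyset} I\bigr) T^{-1}$ — more precisely one sums the spectral idempotents of the individual blocks — and countable additivity is inherited from the orthogonal decomposition. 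Conversely, if the root vectors form a Riesz basis with parentheses with blocks $\fH_k$, there is a boundedly invertible $T$ carrying mutually orthogonal subspaces $\fH'_k$ onto $\fH_k$; since each $\fH_k$ is $A$-invariant and finite dimensional, $T^{-1} A T$ leaves each $\fH'_k$ invariant, so $T^{-1} A T = \bigoplus_k (T^{-1} A T)|_{\fH'_k}$ is an orthogonal direct sum of finite-dimensional operators. The main obstacle throughout — and the one place where genuine care rather than bookkeeping is required — is the domain issue for unbounded $A$ in parts (i) and (ii): one must verify that the similarity transform $T$ maps the domain of the model operator exactly onto $\dom(A)$, which rests on the convergence of the eigenfunction expansion guaranteed by algebraic simplicity plus the Riesz basis property, rather than being automatic as in the bounded case.
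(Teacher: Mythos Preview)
Your proposal is correct and follows essentially the same route as the paper: for necessity you pull back the orthonormal eigenbasis (or orthogonal block decomposition) through the similarity, and for sufficiency you use the Riesz basis transform $T$ to conjugate $A$ to a diagonal (resp.\ block-diagonal) model. The one minor divergence is in the ``$A$ is spectral'' clause of part~(iii): you sketch a direct construction of the countably additive resolution of identity from the block projections, whereas the paper instead observes that an orthogonal direct sum of finite-dimensional operators admits a decomposition $S+N$ with $S$ normal and $N$ quasi-nilpotent commuting with $S$, and then invokes \cite[Theorem~XVIII.2.28]{DunSchw71}.
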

%
%
\begin{proof}
(i) \emph{The necessity} is obvious because of the corresponding properties of
normal operator and Definition \ref{def:basis}(i).

\emph{To prove sufficiency} let $\{\l_k\}_{k \in \bN}$ be a sequence of
eigenvalues of $A$, counting (geometric) multiplicities, $Af_k = \l_k f_k$, $k
\in \bN$. Since $\{f_k\}_{k=1}^{\infty}$ forms a Riesz basis in $\fH$, there
exists a bounded operator $T : \fH \to \fH$ with bounded inverse and an
orthonormal basis $\{e_k\}_{k=1}^{\infty}$ in $\fH$ such that $f_k = T e_k$,
$k \in \bN$.

Define a diagonal operator $S$ in $\fH$ by setting $S e_k = \l_k e_k$, $k \in \bN$,
and extending  it by linearity to the natural (maximal) domain of definition.
Clearly, $S$ is normal and $T^{-1} A T e_k = S e_k$, $k \in \bN$, i.e.  $A$ and
$S$ are similar.

(ii) and (iii) are proved similarly if one defines an operator $S$ accordingly in
an orthogonal Jordan chain chosen in each (necessarily finite-dimensional) root
subspace $\cR(\l_0, A)$ corresponding to each eigenvalue $\l_0$ of $A$.

Clearly, an orthogonal sum of finite dimensional operators admits the
representation $S + N$, where $S$ is normal operator and $N$ is a quasi-nilpotent
operator that commutes with $S$. Hence operator $A$ in part (iii) is similar to a
spectral operator according to~\cite[Theorem~XVIII.2.28]{DunSchw71} and thus is
spectral operator itself.
\end{proof}
To prove the main result of the Section let us recall a result from~\cite{LunMal15}
on Riesz basis property with parentheses for the operator $L_{U_1,U_2}(B,Q)$.
%
%
\begin{proposition}~\cite[Proposition 5.9]{LunMal15} \label{prop:basis.per}
Let $Q \in L^{\infty}([0,1]; \bC^{2 \times 2})$ and let $L := L_{U_1,U_2}(B,Q)$
be an operator associated with the BVP~\eqref{eq:riesz.system},
\eqref{eq:riesz.U1.U2.d1.d2}. Then the system of root functions of $L$ forms a
Riesz basis with parentheses in $L^2([0,1]; \bC^2)$.
\end{proposition}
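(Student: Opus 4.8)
The plan is to follow the resolvent/contour-integration scheme of~\cite{LunMal15} (cf. also~\cite{LunMal16JMAA} for the Dirac case), comparing $L := L_{U_1,U_2}(B,Q)$ with the free operator $L_0 := L_{U_1,U_2}(B,0)$. By Lemma~\ref{lem:similarity_to_normal}(iii) it suffices to produce a system of closed contours $\{\Gamma_k\}_{k\ge 1}$ in $\bC$ and the associated Riesz projections $P_k := \frac{1}{2\pi i}\oint_{\Gamma_k}(L-\l)^{-1}\,d\l$ such that: (a)~the $\Gamma_k$ partition $\sigma(L)$ into clusters of uniformly bounded cardinality; and (b)~$\sum_k P_k = I_{\fH}$ strongly, $\sup_N\bigl\|\sum_{k\le N}P_k\bigr\| < \infty$, and $\{\ran P_k\}$ is a Riesz basis of subspaces of $\fH = L^2([0,1];\bC^2)$. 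For the free operator this is immediate: with $d_j = e^{-2\pi i\gamma_j}$ as in~\eqref{eq:dj=e.gammaj}, the root functions of $L_0$ are the exponentials $f^0_{nj}(x) = e^{i b_j\l^0_{nj}x}\mathbf{e}_j$, $n\in\bZ$, $j\in\{1,2\}$ (here $\mathbf{e}_1,\mathbf{e}_2$ is the standard basis of $\bC^2$ and $\l^0_{nj}$ is as in~\eqref{eq:Lambda0}); since $e^{i b_j\l^0_{nj}x} = e^{2\pi i\gamma_j x}e^{2\pi i n x}$ and multiplication by $e^{2\pi i\gamma_j x}$ is boundedly invertible on $L^2[0,1]$, the family $\{f^0_{nj}\}$ is a Riesz basis of $\fH$ (at most two of the $\l^0_{nj}$ may coincide, which is harmless). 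In particular $L_0$ is similar to a normal operator, and the free projections $P^0_k$ (grouped exactly as the $P_k$) form a Riesz basis of subspaces.

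Second, I would construct the $\Gamma_k$ and bound the resolvent on them. By Proposition~\ref{prop:Lambda.asymp} the eigenvalues of $L$ satisfy $\l_{nj} = \l^0_{nj} + o(1)$, so for $|k|$ large one may take $\Gamma_k$ to be the boundary of $\bD_{\eps}(\l^0_{nj})$ (merging into one connected component the at most two that are $o(1)$-close), plus one large contour enclosing the finitely many remaining eigenvalues. On $\Gamma_k$ one estimates $(L-\l)^{-1}$ via the resolvent formula~\eqref{eq:resolv.formula}: the norm of $K_{\l}$ and the growth of $\Phi(\cdot,\l),\Phi^{-1}(\cdot,\l)$ are controlled by the triangular representation (Lemma~\ref{prop:phi.jk=e+int}, in its $L^\infty$-potential version, which holds by~\cite{Mal99}), while $\|M_{U_1,U_2}(\l)\|$ is controlled by Lemma~\ref{lem:MU} together with the lower bound $|\Delta(\l)| \gtrsim e^{-\Im(b_1\l)} + e^{-\Im(b_2\l)} + 1$ valid on $\Gamma_k$; the latter follows from Corollary~\ref{cor:Delta0.estim} and the perturbation estimate $|\Delta(\l) - \Delta_0(\l)| = o\bigl(e^{-\Im(b_1\l)} + e^{-\Im(b_2\l)} + 1\bigr)$ obtained by combining Lemma~\ref{lem:Delta=Delta0+} with the Riemann-Lebesgue Lemma~\ref{lem:RimLeb}. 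Tracking the exponential weights $e^{-\Im(b_j\l)}$ through these bounds, numerator and denominator balance, so that $\sup_k\sup_{\l\in\Gamma_k}\|(L-\l)^{-1}\| < \infty$; since the contours $\Gamma_k$ have uniformly bounded length, $\sup_k\|P_k\| < \infty$ and $\dim\ran P_k = \dim\ran P^0_k \le 2$ for all large $k$ (and finite for every $k$). A standard Rouch\'e / winding-number argument on the same contours gives (a) and $\sum_k P_k = I_{\fH}$ strongly.

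Third---and this is where the real work lies---I would prove the quadratic closeness $\sum_k\|P_k - P^0_k\|^2 < \infty$, whence, by the Bari-Markus stability theorem for Riesz bases of subspaces (see e.g.~\cite{Markus86}), $\{\ran P_k\}$ is again a Riesz basis of subspaces; combined with the uniform bound on $\dim\ran P_k$ and Lemma~\ref{lem:similarity_to_normal}(iii) this finishes the proof. To obtain the $\ell^2$-bound one writes $P_k - P^0_k = \frac{1}{2\pi i}\oint_{\Gamma_k}\bigl[(L-\l)^{-1} - (L_0-\l)^{-1}\bigr]\,d\l$ and, using~\eqref{eq:resolv.formula} for both operators, reduces the integrand to expressions built from $\Delta(\l)-\Delta_0(\l)$, from $\Phi(\cdot,\l)-\Phi^0(\cdot,\l)$, and from $M_{U_1,U_2}(\l)-M^0(\l)$ (with $M^0(\l)$ the diagonal free $M$-matrix). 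By Lemma~\ref{prop:phi.jk=e+int} and Lemma~\ref{lem:Delta=Delta0+} each of these is a sum of integrals $\int_0^1 R(t)e^{i b_h\l t}\,dt$ with $R\in L^\infty[0,1]\subset L^2[0,1]$; evaluated at $\l = \l^0_{nh}$ these are, up to the boundedly invertible weights $e^{2\pi i\gamma_h t}$, the Fourier coefficients of $R$ with respect to the orthonormal basis $\{e^{2\pi i n t}\}_{n\in\bZ}$, hence square-summable in $n$ by Bessel's inequality; a routine uniform estimate on $\Gamma_k$ (shrinking $\eps$ and using continuity in $\l$) transfers this to $\sum_k\|P_k - P^0_k\|^2 < \infty$. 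The main obstacle is precisely this last $\ell^2$-estimate: it requires keeping the exponential weights $e^{-\Im(b_j\l)}$ exactly balanced between the small numerators $\Delta-\Delta_0$, $\Phi-\Phi^0$ and the lower bound on $|\Delta(\l)|$ uniformly across the whole contour family, and it rests on the $L^\infty$- (equivalently $L^1$-) potential analogues of the transformation-operator estimates of Section~\ref{sec:riesz}. The finitely many clusters with small $|k|$, and the at most one double eigenvalue of $L_0$, are absorbed into the finite-dimensional block and do not affect the argument.
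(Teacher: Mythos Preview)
The paper does not prove this proposition at all: it is quoted verbatim from \cite[Proposition~5.9]{LunMal15} and used as a black box. So there is no ``paper's own proof'' to compare with beyond the citation. That said, your sketch is a reasonable outline of the Bari--Markus route used in \cite{LunMal16JMAA} for Dirac systems, and the first two steps (construction of $\Gamma_k$, uniform resolvent bounds, Rouch\'e localisation of eigenvalues) can indeed be carried out for $Q\in L^{\infty}$ using only the Volterra integral equation for $\Phi(\cdot,\l)$.

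There is, however, a genuine gap in your third step. Your $\ell^2$-estimate $\sum_k\|P_k-P_k^0\|^2<\infty$ is obtained by writing the differences $\Phi-\Phi^0$ and $\Delta-\Delta_0$ as Fourier-type integrals $\int_0^1 R(t)e^{ib_h\l t}\,dt$ via Lemma~\ref{prop:phi.jk=e+int} and Lemma~\ref{lem:Delta=Delta0+}, and then invoking Bessel's inequality. But both lemmas, and the underlying transformation-operator Proposition~\ref{prop:trans.oper}, are stated in this paper only for analytic $Q\in A(\bD_R;\bC^{2\times2})$; your parenthetical claim that an ``$L^\infty$-potential version holds by~\cite{Mal99}'' is not supported by that reference. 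The point is that when $b_1 b_2^{-1}\notin\bR$ the Goursat problem for the kernel $K^{\pm}$ is no longer genuinely hyperbolic (the characteristics are complex), and \cite{Mal99} imposes analyticity precisely to compensate for this. So the representation \eqref{eq:phijk}---on which your Bessel argument rests---is simply not available for merely bounded $Q$, and the $\ell^2$-closeness step collapses.

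The method of \cite{LunMal15} that the paper cites bypasses this: rather than proving quadratic closeness of Riesz projections to those of $L_0$, it obtains uniform bounds on partial sums of spectral projections directly from resolvent estimates on expanding contours, together with the ``$\eps$-close'' clustering of eigenvalues along the rays $\arg\l=-\arg b_j$ and $\arg\l=\pi-\arg b_j$ described in the proof of Theorem~\ref{th:similarity-to_normal}(i). That argument needs only crude asymptotics of $\Phi(\cdot,\l)$ obtainable from the Volterra equation for $Q\in L^{\infty}$, not the triangular transformation operator.
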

%
%
Finally, we are ready to state the main result of the section on Riesz basis
property of the operator $L_{U_1,U_2}(B,Q)$.
%
%
\begin{theorem} \label{th:similarity-to_normal}
Let $Q \in A(\bD_R; \bC^{2 \times 2})$, $b_1 b_2^{-1} \notin \bR$ and let $L :=
L_{U_1,U_2}(B,Q)$ be the operator associated with the BVP~\eqref{eq:riesz.system},
\eqref{eq:riesz.U1.U2.d1.d2}. Then the following statements hold:
\begin{itemize}
\item[(i)] The operator $L$ is similar to an almost normal operator. In
particular, each eigenvalue of $L$ but finitely many is algebraically simple and
the system of root functions of $L$ forms a Riesz basis in $L^2([0,1]; \bC^2)$.
\item[(ii)] Let in addition condition~\eqref{eq:aj.ne.bc} holds. Then there
exists $C > 0$ such that the operator $L$ is similar to a normal operator
whenever $\|Q\|_{C[0,1]} < C$.
\end{itemize}
\end{theorem}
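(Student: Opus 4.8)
The plan is to deduce Theorem~\ref{th:similarity-to_normal} by combining the spectral asymptotics from Proposition~\ref{prop:Lambda.asymp} with the criteria for similarity to a (almost) normal operator collected in Lemma~\ref{lem:similarity_to_normal} and the Riesz basis property with parentheses from Proposition~\ref{prop:basis.per}. The key observation is that an analytic potential $Q \in A(\bD_R; \bC^{2\times 2})$ is in particular bounded, so Proposition~\ref{prop:basis.per} applies and the root functions of $L = L_{U_1,U_2}(B,Q)$ already form a Riesz basis \emph{with parentheses}; what remains is to show that the blocks can be taken to be one-dimensional (up to finitely many), i.e. to remove the parentheses.

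\medskip

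For part (i), I would argue as follows. By Proposition~\ref{prop:Lambda.asymp}(i) the eigenvalue sequence $\Lambda = \{\l_{nj}\}$ is asymptotically separated: there is $n_0$ such that for $|n| > n_0$ the points $\l_{nj}$ are simple zeros of $\Delta(\cdot)$ and pairwise $2\delta$-separated. Simplicity of the zero of the characteristic determinant forces the corresponding eigenvalue to be geometrically \emph{and} algebraically simple (the root subspace is one-dimensional — this is the standard fact that the order of the zero of $\Delta$ equals the algebraic multiplicity, which can be read off from Lemma~\ref{lem:resolv.gen}/Lemma~\ref{lem:MU} since the resolvent has a pole whose order matches the zero order of $\Delta$). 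Hence all but finitely many eigenvalues are algebraically simple. Now take the Riesz-basis-with-parentheses guaranteed by Proposition~\ref{prop:basis.per}: since its blocks $\fH_k = \Span\{f_j\}_{j=n_{k-1}}^{n_k - 1}$ eventually each contain exactly one eigenvalue (by asymptotic separation, a block cannot straddle two widely separated spectral clusters for large indices), the blocks are eventually one-dimensional. A Riesz basis of subspaces all but finitely many of which are one-dimensional is, after replacing the finitely many large blocks by orthonormalized Jordan chains, a genuine Riesz basis of vectors; equivalently, $L$ decomposes as a finite-dimensional part plus a part whose root vectors form a Riesz basis of one-dimensional blocks, so by Lemma~\ref{lem:similarity_to_normal}(ii) the operator $L$ is similar to an almost normal operator. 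I would spell out the merging-of-blocks step carefully: one needs that finitely many Riesz blocks can be lumped together without destroying the Riesz-basis-of-subspaces property, which is immediate, and that a finite-dimensional block is orthogonally decomposable with a normal complement — that is the definition of almost normal.

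\medskip

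For part (ii), under the additional Diophantine condition~\eqref{eq:aj.ne.bc}, Proposition~\ref{prop:Lambda.asymp}(ii) provides $C > 0$ such that $\Lambda$ is \emph{separated} (not merely asymptotically) whenever $\|Q\|_{C[0,1]} < C$. Separation means \emph{every} zero of $\Delta(\cdot)$ is simple, hence every eigenvalue of $L$ is algebraically simple and every block in the Riesz-basis-with-parentheses is one-dimensional. Therefore the root functions form an honest Riesz basis of $L^2([0,1];\bC^2)$ with no parentheses, and Lemma~\ref{lem:similarity_to_normal}(i) yields similarity to a normal operator. One subtlety to check: the constant $C$ from Proposition~\ref{prop:Lambda.asymp}(ii) controls only the asymptotic (Rouché) regime $|\l| > R_\eps$, while separation on the finitely many remaining zeros inside $\bD_{R_\eps}$ for small $\|Q\|$ follows by a continuity/perturbation argument from the $Q=0$ case, where Lemma~\ref{lem:delta0}(ii) already gives separation; shrinking $C$ if necessary handles this.

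\medskip

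The main obstacle I anticipate is the bookkeeping in the block-merging argument for part (i): one must argue that the ``parentheses'' supplied by Proposition~\ref{prop:basis.per} can be chosen (or re-chosen) to be compatible with the asymptotic separation of $\Lambda$, so that large-index blocks are singletons. This is really a statement that a Riesz basis of subspaces remains one after (a)~regrouping finitely many initial subspaces into one and (b)~splitting a subspace that happens to be a direct sum of spectral subspaces at separated eigenvalues — the latter split being along the spectral projections of $L$ restricted to that finite-dimensional block, which are uniformly bounded because the eigenvalues are $2\delta$-separated. Everything else is a direct invocation of the cited lemmas and propositions.
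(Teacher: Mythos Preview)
Your proposal follows the same overall strategy as the paper: invoke Proposition~\ref{prop:basis.per} for the Riesz basis with parentheses, use Proposition~\ref{prop:Lambda.asymp} to control the eigenvalue geometry, and conclude via Lemma~\ref{lem:similarity_to_normal}. There are two points where the paper's execution differs from yours.

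First, regarding the block-merging obstacle you flag in part~(i): the paper avoids your abstract splitting/regrouping argument entirely by invoking the \emph{specific} structure of the parentheses produced in~\cite[Proposition~5.9]{LunMal15}. There the blocks are not arbitrary but are explicitly defined as unions of root subspaces whose eigenvalues are mutually $\eps$-close with respect to the sequence $\Psi = \{-\varphi_1, -\varphi_2, \pi-\varphi_1, \pi-\varphi_2\}$ of directions (where $\varphi_j = \arg b_j$). The paper then observes that the eigenvalues $\l_{nj}$ lie asymptotically along two \emph{non-parallel} lines (since $b_1 b_2^{-1} \notin \bR$), so for small $\eps$ and large indices no two distinct eigenvalues can be $\eps$-close in this sense --- eigenvalues on different lines fail the angular condition, and eigenvalues on the same line are separated by $\approx 2\pi|b_j|^{-1}$. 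Hence the blocks are eventually singletons by construction, and no splitting along spectral projections is needed. Your abstract approach would also work, but requires the extra uniform-boundedness-of-projections step you mention; the paper's route is shorter because it uses more of the cited result.

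Second, the subtlety you raise in part~(ii) is not actually present. Re-read the proof of Proposition~\ref{prop:Lambda.asymp}(ii): the Rouch\'e estimate~\eqref{eq:Delta-Delta0.sep} holds for \emph{all} $\l \in \bC \setminus \Omega_{\eps}$, not merely for $|\l|$ large, because under the smallness hypothesis $\|Q\| < C$ the bound on $|g_j(t)|$ is uniform in $t$ and the exponential estimate~\eqref{eq:ebjt<} is global. So every disc $\bD_{\eps}(\l_0)$, $\l_0 \in \Lambda_0$, contains exactly one simple zero of $\Delta$, and the sequence $\Lambda$ is separated outright --- no continuity patch for a bounded region is needed. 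The paper's proof of part~(ii) accordingly just cites part~(i) for the Riesz basis, Proposition~\ref{prop:Lambda.asymp}(ii) for simplicity and separation, and Lemma~\ref{lem:similarity_to_normal}(i) to conclude.
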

%
%
\begin{proof}
\textbf{(i)} Due to~\cite[Proposition 5.9]{LunMal15} the system of root functions
of the operator $L$ forms a Riesz basis with parentheses in $L^2([0,1]; \bC^2)$,
where each block is constituted by the root subspaces corresponding to the
eigenvalues of $L$ that are mutually $\eps$-close with respect to the sequence
$\Psi := \{-\varphi_1, -\varphi_2, \pi - \varphi_1, \pi - \varphi_2\}$. Here
$\varphi_j = \arg b_j$, $j \in \{1, 2\}$, and $\eps > 0$ is sufficiently small.
Recall that numbers $\l, \mu \in \bC$ are called $\eps$-close with respect to
the sequence $\{\psi_k\}_{k=1}^m$ if for some $k \in \{1, \ldots, m\}$ they
belong to a small angle of size $2\eps$ with the bisectrix $l_+(\psi_k) := \{\l
\in \bC : \arg \l = \psi_k\}$ and their projections on this ray differ no more
than by $\eps$ (see \cite[Definition~5.4]{LunMal15})

Let us prove that for $\eps (> 0)$ sufficiently small the above blocks are
asymptotically of size one, i.e. $n_{k+1} = n_k + 1$ for sufficiently large $k$
(see Definition~\ref{def:basis}, (iii)). Due to Proposition~\ref{prop:Lambda.asymp},
(i), eigenvalues of $L$ are asymptotically simple and separated and, due to
asymptotic formula~\eqref{eq:l.n=l.n0+o(1)} and the form~\eqref{eq:Lambda0} of
the sequence $\Lambda_0 = \{\l_{nj}^0\}_{n \in \bZ, \, j \in \{1, 2\}}$, they
are located along 2 different non-parallel lines of $\bC$ that are parallel to
the rays $l_+(-\varphi_1)$, $l_+(-\varphi_2)$. It is clear now that for
sufficiently small $\eps > 0$ and sufficiently large $n, m \in \bZ$ different
numbers $\l_{nj}$ and $\l_{mk}$ are not $\eps$-close with respect to the
sequence $\Psi$, for $j, k \in \{1, 2\}$. Indeed, if $j \ne k$ then they don't
belong to a small angle with the bisectrix $l_+(\psi)$ for any $\psi \in \Psi$,
since they are close to 2 different non-parallel lines of $\bC$. If $j=k$ then
$n \ne m$ and numbers $\l_{nj}$ and $\l_{mj}$ belong to a small angle with the\
bisectrix $l_+(\psi)$ for some $\psi \in \Psi$. From the form~\eqref{eq:Lambda0}
of the sequence $\Lambda_0$ and asymptotic formula~\eqref{eq:l.n=l.n0+o(1)} it
is clear that projections of $\l_{nj}$ and $\l_{mj}$ on $l_+(\psi)$ are
separated.

Thus, $n_{k+1} = n_k + 1$ for sufficiently large $k$, hence the system of root
functions of $L$ forms a Riesz basis (without parentheses) in $L^2([0,1]; \bC^2)$.

\textbf{(ii)} By \textbf{(i)} the system of root functions of the operator $L$
forms a Riesz basis in $L^2([0,1]; \bC^2)$. On the other hand, in accordance
with Proposition~\ref{prop:Lambda.asymp}, (ii), eigenvalues of $L$ are
(algebraically and geometrically) simple and separated provided that $\|Q\| <
C$, for certain $C > 0$. To get a similarity of $L$ to a normal operator it
remains to apply Lemma~\ref{lem:similarity_to_normal}, (i).
\end{proof}
%
%
\begin{remark}\label{rem:Riesz_bas_prop_for_Dirac}
(i) The Riesz basis property for $2 \times 2$ Dirac operators $L_{U_1, U_2}$
has been investigated in numerous papers (see~\cite{TroYam02, DjaMit10BariDir,
Bask11, DjaMit12UncDir, DjaMit12Crit, DjaMit13CritDir, Gub03, LunMal14Dokl,
LunMal16JMAA, SavShk14} and references therein). The most complete result was
recently obtained independently and by different methods in~\cite{LunMal14Dokl,
LunMal16JMAA} and~\cite{SavShk14}. Namely, assuming that $B=B^*$ and $Q(\cdot)
\in L^1([0,1]; \bC^{2 \times 2})$ it is proved in~\cite{LunMal14Dokl,
LunMal16JMAA} (the general case of $b_1b_2 <0$) and~\cite{SavShk14} (the Dirac
case, $b_1= -b_2$) that the system of root vectors of equation~\eqref{eq:system}
subject to \emph{regular boundary conditions} constitutes a \emph{Riesz basis
with parentheses} in $L^2([0,1]; \bC^2)$ and ordinary Riesz basis provided that
BC are \emph{strictly regular}.

(ii) Note also that an important role in proving Riesz basis property
in~\cite{LunMal14Dokl, LunMal16JMAA} and~\cite{SavShk14} is playing the
following asymptotic formula
\begin{equation} \label{eq:l.n=l.n0+o(1)_Intro}
	\l_n = \l_n^0 + o(1), \quad\text{as}\quad n \to \infty, \quad n \in \bZ,
\end{equation}
for the eigenvalues $\{\l_n\}_{n \in \bZ}$ of the operator $L_{C, D}(B, Q)$ with
regular BC (and summable potential matrix $Q$), where $\{\l_n^0\}_{n \in \bZ}$
is the sequence of eigenvalues of the unperturbed operator $L_{C, D}(B, 0)$.
Note also that formula~\eqref{eq:l.n=l.n0+o(1)_Intro} has recently been applied
to investigation of spectral properties of Dirac systems on star
graphs~\cite{AdamLang16}.

(iii) Note that the periodic problem for system
\eqref{eq:riesz.system}--\eqref{eq:riesz.BQ} substantially differs from that
for Dirac operators. Namely, periodic BVP for system
\eqref{eq:riesz.system}--\eqref{eq:riesz.BQ} is always strictly regular, while
for Dirac system it is only regular.

Another proof of Theorem \ref{th:similarity-to_normal}(i) can also be obtained
in just the same way as the proof of Riesz basis property for Dirac operators
in~\cite{LunMal14Dokl, LunMal16JMAA}. The proof ignores Proposition
\ref{prop:basis.per} and is completely relied on transformation operators.
\end{remark}
%
%
\begin{remark}
Numerous papers are devoted to the completeness and Riesz basis property for the
Sturm-Liouville operator (see the recent surveys~\cite{Mak12, Mak13, Mak15} by
A.S.~Makin and the papers cited therein). In connection with Theorem
\ref{th:similarity-to_normal} we especially mention the recent achievements for
periodic (anti-periodic) Sturm-Liouville operator $-\frac{d^2}{dx^2} + q(x)$ on
$[0,\pi]$. Namely, F.~Gesztesy and V.A.~Tkachenko~\cite{GesTka09,GesTka12} for
$q \in L^2[0,\pi]$ and P.~Djakov and B.S.~Mityagin~\cite{DjaMit12Crit} for $q
\in W^{-1,2}[0,\pi]$ established by different methods a \emph{criterion} for the
system of root functions to contain a Riesz basis.
\end{remark}
%
%
\section{Completeness property under rank one perturbations}
\label{sec:rank.one}
%
%
First we recall definition of a dual pair.
%
%
\begin{definition}
\begin{itemize}
\item[(i)] A pair $\{S_1,S_2\}$ of closed densely defined operators in $\fH$ is
called a dual pair of operators if $S_1 \subset S_2^* \ (\Longleftrightarrow
S_2\subset S_1^*)$.
\item[(ii)]
An operator $T$ is called a proper extension of the dual pair $\{S_1,S_2\}$ if
$S_1\subset T\subset S_2^*$.
\end{itemize}
\end{definition}
%
%
\begin{example}
A typical example one obtains by choosing $\{S_1,S_2\}$ to be the minimal
operators associated in $\fH = L^2[0,1]$ with Sturm-Liouville differential
expressions $\cL(q) = -d^2/dx^2 + q$ and $\cL(\ol{q}) = -d^2/dx^2 + \ol{q}$,
respectively. Assuming that $q\in L^2[0,1]$ one gets that the minimal operators
$L_{\min}(q)$ and $L_{\min}(\ol{q})$ are given by differential expressions
$\cL(q)$ and $\cL(\ol{q})$ on the domain
\begin{equation*}
 \dom(L_{\min}(q)) = \dom(L_{\min}(\ol{q})) = W^{2,2}_{0}[0,1].
\end{equation*}
\end{example}
%
%
\begin{example}
Another example one obtains by choosing $S_1$ and $S_2$ to be the minimal
operators associated in $L^2([0,1]; \bC^n)$ with expression~\eqref{eq:system.nxn}
and its formal adjoint ${\cL}(B^*, Q^*) = -i \(B^*\)^{-1} d/dx + Q^*(x)$:
\begin{equation} \label{eq:dual_pair_of_L(B,Q)}
 S_1 := L_{\min}(B,Q) \quad \text{and} \quad S_2 := L_{\min}(B^*,Q^*).
\end{equation}
If $Q(\cdot) \in L^2([0,1]; \bC^{n\times n})$, then
\begin{equation*}
 \dom(S_1) = \dom(S_2) = W^{1,2}_{0}([0,1];\bC^n).
\end{equation*}
%
%
%
\end{example}
%
%
Next we assume that $n=2$ and that matrices $B$ and $Q(\cdot)$ are given by
\begin{equation}\label{eq:normal.BQ}
 B = \text{diag}(b_1, b_2),
 \quad b_1 b_2^{-1} \not \in \bR, \quad\text{and}\quad
 Q = \begin{pmatrix} 0 & Q_{12} \\ Q_{21} & 0 \end{pmatrix}
 \in L^1([0,1]; \bC^{2\times 2}).
\end{equation}
Denote by $L_{U_1,U_2}(B,Q)$ the operator generated by the equation
\begin{equation}\label{eq:normal.2x2}
 \cL y := -i B^{-1} y' + Q(x)y = \l y, \qquad y = \col(y_1,y_2), \qquad
 x \in [0,1],
\end{equation}
subject to the boundary conditions
\begin{equation}\label{eq:BC.2x2}
	U_j(y) := a_{j 1}y_1(0) + a_{j 2}y_2(0) + a_{j 3}y_1(1) + a_{j 4}y_2(1) = 0,
	\quad j \in \{1,2\}.
\end{equation}

First, we recall following~\cite{LunMal18} a description of normal extensions
of the dual pair $\{S_1,S_2\}$ of the form~\eqref{eq:dual_pair_of_L(B,Q)}, i.e.
all normal operators $L_{U_1,U_2}(B,Q)$ generated by the BVP
\eqref{eq:normal.2x2}--\eqref{eq:BC.2x2}.
%
%
\begin{lemma} \cite{LunMal18} \label{lem:normal.BC}
Operator $L_{U_1,U_2}(B,0)$ is normal if and only if boundary conditions
$\{U_1, U_2\}$ are of the form
\begin{equation} \label{eq:U1.U2.d1.d2}
 U_1(y) = y_1(0) - d_1 y_1(1) = 0, \quad
 U_2(y) = y_2(0) - d_2 y_2(1) = 0, \quad |d_1| = |d_2| = 1.
\end{equation}
\end{lemma}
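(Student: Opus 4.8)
The plan is to prove the two implications by quite different means: sufficiency by an explicit eigenfunction expansion, and necessity by reducing normality to a finite-dimensional linear-algebra condition on the ``trace space'' of the boundary conditions.

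For sufficiency, suppose $\{U_1,U_2\}$ has the form~\eqref{eq:U1.U2.d1.d2}. Since $Q\equiv0$ we have $\dom L_{\max}(B,0)=W^{1,2}([0,1];\bC^2)$, and because $U_1$ involves only $y_1$ while $U_2$ involves only $y_2$, the operator $L:=L_{U_1,U_2}(B,0)$ splits as $L=L_1\oplus L_2$, where $L_j=-ib_j^{-1}\,d/dx$ on $\dom L_j=\{f\in W^{1,2}[0,1]:f(0)=d_jf(1)\}$. Writing $d_j=e^{2\pi i\theta_j}$ with $\theta_j\in\bR$ (possible because $|d_j|=1$), the functions $e^{2\pi i(n-\theta_j)x}$, $n\in\bZ$, lie in $\dom L_j$, are eigenfunctions of $L_j$ with eigenvalues $2\pi b_j^{-1}(n-\theta_j)$, and form an orthonormal basis of $L^2[0,1]$ (a unimodular multiplication twist of the Fourier basis). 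Hence $L_j$ is unitarily equivalent to a maximal multiplication operator on $\ell^2(\bZ)$ and is therefore normal, so $L=L_1\oplus L_2$ is normal.

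For necessity, let $L:=L_{C,D}(B,0)$ with $C=A_{12}$, $D=A_{34}$ be normal; then in particular $\dom L=\dom L^*$. Integration by parts gives, for $f,g\in W^{1,2}([0,1];\bC^2)$, the Lagrange identity
\begin{equation*}
 \langle Lf,g\rangle - \langle f,-i(B^*)^{-1}g'\rangle
 = \sum_{j=1}^{2} i b_j^{-1}\bigl(f_j(0)\overline{g_j(0)} - f_j(1)\overline{g_j(1)}\bigr)
 = \langle Ju, w\rangle_{\bC^4},
\end{equation*}
where $u=(f(0),f(1))$, $w=(g(0),g(1))$ and $J:=i\diag(b_1^{-1},b_2^{-1},-b_1^{-1},-b_2^{-1})$. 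Since $\dom L_{\max}(B^*,0)=W^{1,2}$, it follows that $\dom L^*=\{g\in W^{1,2}:(g(0),g(1))\perp JV\}$, where $V:=\ker\begin{pmatrix}C&D\end{pmatrix}\subset\bC^4$ is the two-dimensional (by maximality) space of admissible trace values for $\dom L$. As the trace map $f\mapsto(f(0),f(1))$ is onto $\bC^4$, the equality $\dom L=\dom L^*$ is equivalent to $V=(JV)^\perp$, hence (both subspaces being two-dimensional and $J$ invertible) to $JV=V^\perp$, i.e. to $\langle Ju,u'\rangle=0$ for all $u,u'\in V$. Putting $u'=u$ and using that $b_1^{-1}$ and $b_2^{-1}$ are linearly independent over $\bR$ (because $b_1b_2^{-1}\notin\bR$) yields $|u_1|=|u_3|$ and $|u_2|=|u_4|$ for every $u=(u_1,u_2,u_3,u_4)\in V$; applying this to $\mu e+\nu f$ with $e,f\in V$ and $\mu,\nu\in\bC$ and expanding gives $e_1\overline{f_1}=e_3\overline{f_3}$ and $e_2\overline{f_2}=e_4\overline{f_4}$. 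If some $e\in V$ has $e_1\ne0$, then $\omega_1:=e_3/e_1$ is unimodular and $f_3=\omega_1f_1$ for all $f\in V$; otherwise $u_1\equiv u_3\equiv0$ on $V$, and an analogous dichotomy holds for the $(u_2,u_4)$ coordinates. Discarding the branches that would force $\dim V\le1$, the only possibility is $V=\{(u_1,u_2,\omega_1u_1,\omega_2u_2):u_1,u_2\in\bC\}$ with $|\omega_1|=|\omega_2|=1$; thus $\dom L=\{f:f_1(1)=\omega_1f_1(0),\ f_2(1)=\omega_2f_2(0)\}$, so $\{U_1,U_2\}$ coincides, up to linear combinations, with the pair~\eqref{eq:U1.U2.d1.d2} for $d_j=\omega_j^{-1}$, $|d_j|=1$.

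The only non-elementary ingredient in the necessity part is the standard fact that a normal operator $T$ satisfies $\dom T=\dom T^*$; everything else is the Lagrange identity plus the linear algebra of $\langle J\cdot,\cdot\rangle$-isotropic subspaces, whose one delicate step is extracting the modulus identities $|u_1|=|u_3|$, $|u_2|=|u_4|$ from isotropy via the $\bR$-linear independence of $b_1^{-1},b_2^{-1}$. (One also has $\|Lf\|^2=\sum_j|b_j|^{-2}\|f_j'\|_{L^2}^2=\|-i(B^*)^{-1}f'\|^2$ automatically, since $|b_j|=|\overline{b_j}|$; combined with the criterion ``$T$ normal $\iff\dom T=\dom T^*$ and $\|Tx\|=\|T^*x\|$'' this re-proves sufficiency as well, although above we argued it directly.)
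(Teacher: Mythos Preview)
Your proof is correct. The paper does not supply its own argument for this lemma; it is quoted from \cite{LunMal18}, a work listed as ``in preparation,'' so there is no in-paper proof to compare against.

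A couple of brief remarks on the write-up. For sufficiency, the cleanest way to close the argument is to note that $M_j:=-i\,d/dx$ on $\{f\in W^{1,2}:f(0)=d_jf(1)\}$ with $|d_j|=1$ is self-adjoint (your twisted Fourier basis shows this, or simply conjugate by the unitary $f\mapsto e^{-2\pi i\theta_j x}f$ to reduce to periodic boundary conditions), whence $L_j=b_j^{-1}M_j$ is a scalar multiple of a self-adjoint operator and hence normal. This sidesteps having to verify that $L_j$ actually \emph{equals} the diagonal multiplication operator you describe, rather than merely extending or being extended by it. Your parenthetical at the end (using $\|Lf\|=\|L^*f\|$ together with $\dom L=\dom L^*$) is an equally clean alternative.

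For necessity, your reduction to the isotropy condition $\langle Ju,u'\rangle=0$ on the trace space $V$ is exactly right, and the key step---extracting $|u_1|=|u_3|$, $|u_2|=|u_4|$ from the diagonal case $u'=u$ via the $\bR$-linear independence of $b_1^{-1},b_2^{-1}$---correctly exploits the standing hypothesis $b_1 b_2^{-1}\notin\bR$. The exclusion of the degenerate branches (where $u_1\equiv0$ or $u_2\equiv0$ on $V$) by the dimension constraint $\dim V=2$ is handled correctly; it may be worth spelling out for the reader that, e.g., $u_1\equiv0$ forces $u_3\equiv0$ and then $V\subset\{(0,u_2,0,\omega_2 u_2)\}$, which is one-dimensional.
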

%
%
\begin{proposition} \cite{LunMal18} \label{prop:normal=const}
Let $B$ and $Q$ be given by~\eqref{eq:normal.BQ}, $Q \in
L^1\([0,1];\bC^{2\times 2}\)$, $Q \not \equiv 0$. Then the operator
$L_{U_1,U_2}(B,Q)$ is normal if and only if a potential matrix $Q(\cdot)$ is a
constant matrix of the form
\begin{equation}\label{Q=const}
 Q(x) = \(b_1^{-1} - b_2^{-1}\) \begin{pmatrix}
 0 & q \\
 \ol{q} & 0
 \end{pmatrix} \ne Q^*(x), \quad x\in [0,1],
 \quad q \in \bC \setminus \{0\},
\end{equation}
and boundary conditions~\eqref{eq:BC.2x2} are of the form
\begin{equation} \label{eq:y1=e.y0}
 y(1) = e^{i \varphi} y(0), \quad \varphi \in [-\pi, \pi).
\end{equation}
\end{proposition}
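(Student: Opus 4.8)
The plan is to characterize normality of $L_{U_1,U_2}(B,Q)$ via its spectrum and resolvent, using the fact that a closed operator with discrete spectrum and a complete system of root vectors is normal if and only if it is unitarily equivalent to a diagonal operator, equivalently if and only if it is similar to a normal operator \emph{and} its root vectors form an orthonormal (not merely Riesz) basis after normalization. A more tractable criterion: $L_{U_1,U_2}(B,Q)$ is normal if and only if it commutes with its adjoint; and since both $L := L_{U_1,U_2}(B,Q)$ and $L^* = L_{\wt U_1,\wt U_2}(B^*,Q^*)$ are operators of the type studied above (with $Q^* = \bigl(\begin{smallmatrix} 0 & \ol{Q_{21}} \\ \ol{Q_{12}} & 0\end{smallmatrix}\bigr)$), I would compare their characteristic determinants and spectra. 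First I would note that the necessity direction can be split off: if $L_{U_1,U_2}(B,Q)$ is normal, then in particular it is similar to a normal operator, so Lemma~\ref{lem:normal.BC} and Theorem~\ref{th:similarity-to_normal} machinery applies, and moreover the boundary conditions must already be of the special ``normal'' shape forced by the $Q=0$ analysis in Lemma~\ref{lem:normal.BC} — i.e. one expects the BC to be, up to equivalence, quasi-periodic.

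The core of the argument is the following chain. Step one: reduce to quasi-periodic boundary conditions \eqref{eq:U1.U2.d1.d2}. For this I would argue that normality forces $\sigma(L) = \sigma(L^*)^* = \ol{\sigma(L)}$ pointwise in a strong sense (the eigenvalue counting functions must match), and combining this with the explicit form of $\Delta(\l)$ in \eqref{eq:Delta} and its exponential asymptotics $J_{12} + J_{34}e^{i(b_1+b_2)\l} + \ldots$, the indicator diagram / Phragmén–Lindberg analysis pins down which $J_{jk}$ may be nonzero. Since $b_1 b_2^{-1}\notin\bR$, the four exponents $0,\ b_1\l,\ b_2\l,\ (b_1+b_2)\l$ have generically distinct growth directions, and normality (which demands the spectrum lie asymptotically on finitely many lines that are symmetric in a way compatible with a self-adjoint-type resolution of identity) forces $J_{13}=J_{42}=0$ and the BC to decouple into the $y_1$-block and $y_2$-block, i.e. exactly \eqref{eq:U1.U2.d1.d2} up to the equivalence of Definition~\ref{def:bc.equiv}. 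Step two: with BC \eqref{eq:U1.U2.d1.d2} fixed, compute $L^*$ explicitly — it is $L_{U_1,U_2}(B^*, Q^*)$ with \emph{the same} decoupled quasi-periodic structure (the adjoint of quasi-periodic BC is quasi-periodic with $d_j \mapsto \ol{d_j}^{-1}$, and $|d_j|=1$ forces $\ol{d_j}^{-1} = d_j$, so the BC are literally preserved, which is why $|d_1|=|d_2|=1$ appears). Step three: impose $L L^* = L^* L$. Using the transformation-operator representation (Proposition~\ref{prop:trans.oper}, Lemma~\ref{prop:phi.jk=e+int}) for the fundamental matrix, expand $L$ and $L^*$ and extract from the commutator identity a pointwise ODE/functional constraint on $Q_{12}, Q_{21}$. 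I expect this to collapse to: $Q$ is constant and $Q_{21} = \ol{Q_{12}}\cdot(\text{fixed ratio})$, which after matching scalings gives precisely \eqref{Q=const}; simultaneously the phase $e^{i\varphi}$ in \eqref{eq:y1=e.y0} arises because normality of the constant-coefficient operator forces $d_1 = d_2 = e^{i\varphi}$ (otherwise the two arithmetic progressions of eigenvalues are ``tilted'' incompatibly and no orthogonal eigenbasis exists).

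For the sufficiency direction I would proceed more concretely: given $Q$ of the form \eqref{Q=const} and BC \eqref{eq:y1=e.y0}, the operator $L_{U_1,U_2}(B,Q)$ has constant coefficients, so I can diagonalize the $2\times 2$ symbol matrix $-iB^{-1}\partial_x + Q$ explicitly. The key computation is that $B^{-1} + (\text{the off-diagonal part of } Q \text{ with the } (b_1^{-1}-b_2^{-1}) \text{ normalization})$ has a spectral decomposition for which the eigenprojections, together with the quasi-periodic BC $y(1)=e^{i\varphi}y(0)$, produce a genuinely orthogonal eigenbasis of exponentials in $L^2([0,1];\bC^2)$; here the precise coefficient $(b_1^{-1}-b_2^{-1})$ and the condition $Q_{21}=\ol{q},\ Q_{12}=q$ are exactly what is needed to make the relevant constant matrix (which governs the $x$-dependence of eigenfunctions) have mutually orthogonal eigenvectors with respect to the standard inner product — i.e. to make it a \emph{normal} matrix after the $B$-twist. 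Then $L$ is unitarily equivalent to a diagonal (multiplication) operator, hence normal.

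The main obstacle is Step one of the core argument: rigorously ruling out all boundary conditions other than the decoupled quasi-periodic ones. The asymptotic/indicator-diagram argument gives the right shape of $\Delta(\l)$ but translating ``the spectrum is asymptotically symmetric / lies on lines'' into ``$L$ is actually normal'' requires care, because similarity-to-normal is weaker than normality, and many non-normal operators share the spectral asymptotics of a normal one. I would handle this by not relying on asymptotics alone but by using the commutation relation $LL^* = L^*L$ directly at the level of the resolvent difference formulas from Section~\ref{sec:gen.prop}: since $L^* = L_{\wt C,\wt D}(B^*, Q^*)$, normality is equivalent to a matrix identity relating $C,D,\wt C,\wt D$ and the fundamental matrices of $\cL(B,Q)$ and $\cL(B^*,Q^*)$; expanding this identity in powers of the (analytic) potential and matching coefficients order by order simultaneously forces the BC structure \emph{and} the constancy of $Q$, avoiding any delicate limiting argument. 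This algebraic route, while computationally heavier, is what I would actually carry out, reserving the asymptotic picture only as motivation.
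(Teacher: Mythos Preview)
The paper does \emph{not} prove this proposition: it is quoted from \cite{LunMal18}, which in the bibliography is listed as ``in preparation.'' There is therefore no proof in the present paper to compare your proposal against. What follows are comments on the internal soundness of your sketch.

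Your overall architecture is the natural one: (a) normality forces $\dom L = \dom L^*$, which is a purely algebraic constraint on the boundary forms and should pin down the quasi-periodic shape with $|d_1|=|d_2|=1$; (b) the commutator condition $LL^* = L^*L$ on the common domain then constrains $Q$. Where your proposal is weakest is in the specific tools you invoke for each step. For Step~1 you lean on indicator-diagram asymptotics of $\Delta(\l)$ and then, acknowledging this is not rigorous, fall back on ``expanding in powers of the (analytic) potential'' --- but the hypothesis is only $Q\in L^1$, so no analyticity is available, and the transformation-operator machinery of Proposition~\ref{prop:trans.oper} and Lemma~\ref{prop:phi.jk=e+int} that you cite in Step~3 likewise requires $Q$ holomorphic. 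The clean route for Step~1 avoids spectral asymptotics entirely: write the adjoint boundary conditions via the integration-by-parts identity (as is done just after \eqref{eq:L*.expr}--\eqref{eq:wtU*12} for a different BC), and impose $\dom L=\dom L^*$ as a linear-algebra condition on the $2\times4$ coefficient matrices; this is where Lemma~\ref{lem:normal.BC} actually lives, and it does not need $Q=0$ since the boundary forms do not see $Q$.

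For Step~3 the commutator computation at the level of differential expressions gives, formally, a first-order constraint $(b_1^{-1}-b_2^{-1})\ol{Q_{21}} = \ol{(b_1^{-1}-b_2^{-1})}\,Q_{12}$ and a zeroth-order constraint involving $Q'$. The first already yields $|Q_{12}|=|Q_{21}|$ and the phase relation in \eqref{Q=const}; the second forces constancy. The gap you must close is that for $Q\in L^1$ the product $LL^*$ is a second-order operator whose zeroth-order coefficient contains $Q'$ only distributionally, so equality of $LL^*$ and $L^*L$ must be read as equality of closed operators (domains included), not merely of formal symbols. One way around this is to test the identity $\|Lf\|^2=\|L^*f\|^2$ on $f\in C_c^\infty((0,1);\bC^2)$, integrate by parts, and read off the constraints on $Q$ weakly; this avoids differentiating $Q$ and works under the stated $L^1$ hypothesis. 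Your Step~2 observation that $d_1=d_2$ (not merely $|d_1|=|d_2|=1$) is forced once $Q\ne0$ is correct in spirit but needs a separate argument: with $Q$ constant off-diagonal, the exact eigenfunctions are explicit exponentials, and orthogonality of eigenvectors across the two spectral families selects $d_1=d_2=e^{i\varphi}$.
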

%
%
Alongside operator $L_{U_1,U_2}(B,Q)$ we consider operator $L_{\wt{U}_1,
\wt{U}_2}(B,Q)$, generated by equation~\eqref{eq:normal.2x2} subject to the
following boundary conditions
\begin{equation} \label{eq:U1.U2.h0.h1.again}
 \wt{U}_1(y):=y_1(0)-h_1 y_2(0)=0, \quad \wt{U}_2(y)= y_1(1)-h_2 y_2(0)=0,
 \quad h_1 h_2 \ne 0.
\end{equation}
Clearly, $\wt{J}_{14} = \wt{J}_{34} = 0$, hence boundary
conditions~\eqref{eq:U1.U2.h0.h1.again} are not weakly regular (cf.
relation~\eqref{cond:w-reg}).

To prove the main result we recall a completeness result from~\cite{MalOri12}.
%
%
\begin{theorem}[\cite{MalOri12}, Theorem 6.1]\label{th61}
Let $B$ and $Q(\cdot)$ be given by~\eqref{eq:normal.BQ}. Then the system of root
vectors of the operator $L_{\wt{U}_1,\wt{U}_2}(B,Q)$ generated by the BVP
\eqref{eq:normal.2x2},~\eqref{eq:U1.U2.h0.h1.again}, is complete and minimal in
$L^2([0,1];\bC^2)$.
\end{theorem}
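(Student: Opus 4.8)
The plan is to follow the general scheme for proving completeness of root vectors of a BVP for the system~\eqref{eq:normal.2x2}: one shows that the characteristic determinant $\wt{\Delta}(\cdot)$ is an entire function of exponential type whose indicator diagram is a certain segment, and one controls the resolvent growth $\|R_{\wt{U}_1,\wt{U}_2}(\l)\|$ along rays in $\bC$ so that a Phragm\'en--Lindel\"of argument applies to $\(R_{\wt{U}_1,\wt{U}_2}(\l)f, g\)$ for $f$ orthogonal to all root vectors. First I would write out $\wt{\Delta}(\l)$ from the general formula~\eqref{eq:Delta} using $\wt{J}_{14}=\wt{J}_{34}=\wt{J}_{42}=0$ and $\wt{J}_{12}=-h_2$, $\wt{J}_{13}=1$, $\wt{J}_{32}=h_1$ (this is exactly~\eqref{eq:J12..J34}), obtaining
\begin{equation*}
 \wt{\Delta}(\l) = -h_2 + h_1 \varphi_{11}(1,\l) + \varphi_{12}(1,\l),
\end{equation*}
as already recorded in~\eqref{eq:Delta.h0.h1}. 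Since $Q\in L^1$, the entries $\varphi_{jk}(1,\l)$ are entire of exponential type with the expected exponential terms $e^{ib_1\l}$, $e^{ib_2\l}$ plus Riemann--Lebesgue-type remainders; hence $\wt\Delta$ is entire of exponential type and $\wt\Delta\not\equiv 0$ (its leading exponential term does not cancel). In particular the spectrum is discrete and the system of root vectors is well defined.

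Next I would establish \emph{minimality}: the biorthogonal system is furnished by the root vectors of the adjoint operator $L_{\wt U_1,\wt U_2}(B,Q)^*$, which is again a BVP operator of the same type (with $B^*$, $Q^*$ and the adjoint boundary conditions), so its root vectors are linearly independent from the given ones in the standard duality; minimality of a system of root vectors of a BVP operator with discrete spectrum is automatic once no generalized eigenfunction is annihilated by the whole adjoint root system, which follows since $\wt\Delta$ has only isolated zeros. The substance of the theorem is \emph{completeness}. Here I would use the resolvent representation of Lemma~\ref{lem:resolv.gen}: $R_{\wt U_1,\wt U_2}(\l)$ is an integral operator whose kernel is built from $\Phi(x,\l)$, $\Phi^{-1}(t,\l)$ and $M_{\wt U_1,\wt U_2}(\l) = \wt\Delta(\l)^{-1}\binom{h_1\ \ 0}{1\ \ 0}$ (formula~\eqref{eq:Delta.h0.h1}). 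Using the exponential bounds on $\Phi$, $\Phi^{-1}$ valid for $Q\in L^1$ together with a lower bound $|\wt\Delta(\l)|\ge c\,(e^{-\Im(b_1\l)}+e^{-\Im(b_2\l)})$ valid off a union of small discs around the zeros and off finitely many exceptional rays, one gets that for every $f,g\in L^2$ the scalar function $F(\l):=\(R_{\wt U_1,\wt U_2}(\l)f,g\)$ is bounded on a system of rays covering $\bC$. If $f\perp$ (all root vectors), then $F$ is entire (the apparent poles at eigenvalues cancel) of minimal exponential type and bounded on enough rays, so $F\equiv 0$ by Phragm\'en--Lindel\"of; letting $g$ range over a dense set and then $\l\to\infty$ along a suitable ray forces $f=0$. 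This is exactly the line of argument behind Theorem~\ref{th2.1_MO} / \cite[Theorem~6.1]{MalOri12}, and since the present boundary conditions~\eqref{eq:U1.U2.h0.h1.again} are a concrete instance covered there, one may simply verify the hypotheses of that theorem rather than redo the Phragm\'en--Lindel\"of estimate.

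The main obstacle is the resolvent growth estimate in the ``bad'' directions: because $b_1b_2^{-1}\notin\bR$, the two exponentials $e^{ib_1\l}$ and $e^{ib_2\l}$ dominate in different half-planes, and along the two rays $\arg\l=-\arg b_1$, $\arg\l=-\arg b_2$ (and their opposites) one of them is bounded while the other oscillates, so the naive lower bound on $|\wt\Delta(\l)|$ degenerates there; moreover $\wt\Delta$ is \emph{not} weakly regular (as noted after~\eqref{eq:U1.U2.h0.h1.again}, $\wt J_{14}=\wt J_{34}=0$), so one cannot quote a regularity-type theorem directly. The way around this is that completeness only needs the resolvent bound on \emph{one} ray in each of the two critical directions avoiding the zeros, which is still available from $|\wt\Delta(\l)|$ because the remaining exponential term $e^{ib_j\l}$ together with the constant $-h_2$ (with $h_1h_2\ne0$ ensuring nondegeneracy) keeps $\wt\Delta$ away from zero there up to the discrete zero set; combined with the two ``good'' directions where $e^{i(b_1+b_2)\l}$ dominates, this yields a complete system of rays for the Phragm\'en--Lindel\"of argument. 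I expect the bulk of the write-up to consist in checking that~\eqref{eq:U1.U2.h0.h1.again} meets the precise hypotheses of \cite[Theorem~6.1]{MalOri12}, which makes the proof essentially a citation with a short verification.
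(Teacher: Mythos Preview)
The paper provides no proof of this theorem: it is stated as a direct quotation of \cite[Theorem~6.1]{MalOri12} and used as a black box in the proof of Proposition~\ref{prop:not.complete} and of Theorem~\ref{th:one.dim.perturb.normal}. Your closing sentence is therefore exactly what the paper does; in fact the paper does not even add a verification step, since the result in \cite{MalOri12} is already formulated for precisely the boundary conditions~\eqref{eq:U1.U2.h0.h1.again} under the hypothesis $b_1b_2^{-1}\notin\bR$.

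One correction to your informal sketch of the underlying argument: the lower bound $|\wt\Delta(\l)|\ge c\bigl(e^{-\Im(b_1\l)}+e^{-\Im(b_2\l)}\bigr)$ cannot hold, even off exceptional rays. Since $\wt J_{14}=\wt J_{34}=\wt J_{42}=0$, for $Q=0$ one has $\wt\Delta_0(\l)=-h_2+h_1e^{ib_1\l}$ (cf.~\eqref{eq:Delta.h0.h1}), which contains no $e^{ib_2\l}$ term and is bounded in the half-plane $\Im(b_1\l)>0$; because $b_1b_2^{-1}\notin\bR$, that half-plane meets the region where $e^{-\Im(b_2\l)}\to\infty$ in an open sector, so the claimed inequality fails there. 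This is exactly the failure of weak regularity you note. The actual mechanism in \cite{MalOri12} is not a two-sided bound on $\wt\Delta$ alone but the special column structure of $M_{\wt U_1,\wt U_2}(\l)$ (its second column vanishes, see~\eqref{eq:Delta.h0.h1}), which keeps the missing exponential from entering the resolvent kernel in a damaging way; the Phragm\'en--Lindel\"of step then goes through on three rays as you anticipate. Apart from this point your outline is sound, but for the purposes of the present paper the correct ``proof'' is simply the citation.
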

%
%
Next we show that under certain additional assumption on a potential matrix, the
adjoint operator $\bigl(L_{\wt{U}_1,\wt{U}_2}(B,Q)\bigr)^*$ may be incomplete.
In particular, it is true in the case of trivial potential matrix $Q \equiv 0$.
%
%
\begin{proposition} \label{prop:not.complete}
Let $B$ and $Q(\cdot)$ be given by~\eqref{eq:normal.BQ}. Assume also that
$Q_{12}(\cdot)$ vanishes at the neighborhood of the endpoint 1, i.e. for some
$a \in (0,1)$
\begin{equation} \label{eq:Q12=0}
 Q_{12}(x) = 0, \quad \text{for a.e.} \quad x \in [a, 1].
\end{equation}
Then the operator $L_{\wt{U}_1,\wt{U}_2}(B,Q)$ corresponding to the BVP
\eqref{eq:normal.2x2},~\eqref{eq:U1.U2.h0.h1.again}, is peculiarly complete
(cf.~Definition~\ref{def:peculiar}).
\end{proposition}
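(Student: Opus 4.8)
The plan is to establish completeness of $L_{\wt U_1,\wt U_2}(B,Q)$ by invoking Theorem~\ref{th61} directly, and then to show that the adjoint operator $\bigl(L_{\wt U_1,\wt U_2}(B,Q)\bigr)^*$ fails to be complete — with root-vector span of infinite codimension — precisely because of the vanishing hypothesis~\eqref{eq:Q12=0}. The first half is immediate: by Theorem~\ref{th61}, applied with $B$ and $Q$ as in~\eqref{eq:normal.BQ} and the boundary conditions~\eqref{eq:U1.U2.h0.h1.again}, the system of root vectors of $L_{\wt U_1,\wt U_2}(B,Q)$ is complete and minimal in $L^2([0,1];\bC^2)$, so there is nothing to do there. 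The work is entirely in the incompleteness of the adjoint.

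To handle the adjoint I would first identify $\bigl(L_{\wt U_1,\wt U_2}(B,Q)\bigr)^*$ explicitly as an operator of the same type: it is $L_{V_1,V_2}(B^*,Q^*)$ for suitable boundary forms $V_1,V_2$ obtained by the standard Lagrange-identity computation (the adjoint of a BVP for a first-order system is again a BVP for the formally adjoint system, with boundary conditions determined by annihilating the boundary form). Since $Q$ has the off-diagonal structure~\eqref{eq:BQ}, $Q^*$ has entries $\ol{Q_{21}}$ in the $(1,2)$-slot and $\ol{Q_{12}}$ in the $(2,1)$-slot; the hypothesis~\eqref{eq:Q12=0} then says the $(2,1)$-entry of $Q^*$ vanishes on $[a,1]$. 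The characteristic determinant of the adjoint problem is, up to a nonvanishing factor and complex conjugation of $\l$, the same function $\wt\Delta(\l)$, so completeness of the adjoint is governed by the growth/zero structure of $\wt\Delta$. Using the triangular transformation-operator representation (Lemma~\ref{prop:phi.jk=e+int} and Lemma~\ref{lem:Delta=Delta0+}, or their $L^1$-analogues) together with~\eqref{eq:Delta.h0.h1}, $\wt\Delta(\l) = -h_2 + h_1\varphi_{11}(\l) + \varphi_{12}(\l)$; the vanishing of $Q_{12}$ near $1$ forces $\varphi_{12}(1,\l)$ to decay faster than the generic exponential term — more precisely, it contributes only a term of order $e^{ib_2\l a}$-type growth rather than $e^{ib_2\l}$ — so that in one of the two half-planes determined by $b_2$ the determinant $\wt\Delta$ is, up to lower-order terms, dominated by a single exponential $h_1 e^{ib_1\l}$. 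The key point: along a suitable ray, $\wt\Delta(\l)$ is bounded below by $c\,e^{-\Im(b_1\l)}$ while being comparable to the indicator diagram of an exponential sum of strictly smaller type than the free determinant $\wt\Delta_0$, which by a Phragmén--Lindelöf / Paley--Wiener argument produces a proper sine-type-like function whose zero counting function has a positive deficiency. Completeness of the root system of the adjoint is equivalent (via the classical resolvent-estimate criterion, as in~\cite{MalOri12}) to the absence of such a deficiency; hence the adjoint is incomplete, and the deficiency being a whole half-line of "missing" exponential frequencies gives infinite codimension of the root span.

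Concretely, the cleanest route is: (a) write $\bigl(L_{\wt U_1,\wt U_2}(B,Q)\bigr)^* = L_{V_1,V_2}(B^*,Q^*)$ and note $Q^*$ satisfies a vanishing condition of type~\eqref{eq:Q12=0} at the endpoint $1$ for its lower-triangular entry; (b) compute the characteristic determinant $\Delta^*(\l)$ of this adjoint BVP via~\eqref{eq:Delta} and insert the transformation-operator representations~\eqref{eq:phijk}, observing that the integral kernels $R_{jkh}$ vanish appropriately so that the support of the relevant kernel lies in $[0,a]$; (c) conclude that $\Delta^*$ is an entire function of exponential type whose indicator (support) diagram is a proper subset of that of the unperturbed determinant — strictly smaller in the $b_2$-direction; (d) invoke the standard fact (e.g.\ via the argument in~\cite{MalOri12} or a direct Carleman/Levinson-type estimate) that completeness of the root vectors of a BVP of this kind is equivalent to the indicator diagram of $\Delta$ filling the full "expected" interval, so that the shrinkage in (c) is exactly an infinite-codimensional defect; (e) combine with Theorem~\ref{th61} to conclude $L_{\wt U_1,\wt U_2}(B,Q)$ is peculiarly complete in the sense of Definition~\ref{def:peculiar}.

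The main obstacle I anticipate is step (d): making rigorous the passage from "the indicator diagram of $\Delta^*$ is too small" to "the root span has infinite codimension." The soft completeness criteria give incompleteness, but to pin down that the codimension is infinite (not merely positive) one needs to exhibit an infinite linearly independent family in the orthogonal complement of the root span — equivalently, to show the resolvent of the adjoint, composed with a rank-one-type map, omits an infinite-dimensional subspace. I expect this is done by testing against the family $\{e^{-i\ol{\mu_k} x}v_k\}$ for a suitable sequence $\mu_k$ running along the "missing" ray and vectors $v_k\in\bC^2$, and checking via the explicit resolvent formula~\eqref{eq:resolv.formula}–\eqref{eq:MU.def} that each such function is orthogonal to all root vectors of $L_{\wt U_1,\wt U_2}(B,Q)$; the verification reduces to the fact that $\varphi_{12}(1,\l)$ and the associated kernel vanish on $[a,1]$, which decouples one component of the problem near the endpoint. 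Everything else — the adjoint computation, the transformation-operator estimates, the exponential-type bookkeeping — is routine given the lemmas already proved in Sections~2 and~3 and the cited results of~\cite{MalOri12, Mal99, LunMal14}.
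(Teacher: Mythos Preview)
Your approach has a genuine gap in step (d), and it is not a technicality. The characteristic determinant of the adjoint operator is related to that of the original by $\Delta^*(\l)=\ol{\wt\Delta(\ol\l)}$ (up to a nonvanishing constant), so the two entire functions have identical indicator diagrams and zero distributions. Since the original operator $L_{\wt U_1,\wt U_2}(B,Q)$ \emph{is} complete by Theorem~\ref{th61}, any completeness criterion that reads only the growth or zero structure of the characteristic determinant must declare the adjoint complete as well. In other words, the incompleteness of $(L_{\wt U_1,\wt U_2})^*$ cannot be detected by indicator-diagram shrinkage of $\Delta^*$; your proposed equivalence ``completeness $\Leftrightarrow$ full indicator diagram'' fails precisely for the non-weakly-regular boundary conditions in play here. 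The fallback you sketch in step (e)---testing against exponentials along a missing ray---does not repair this, because there is no missing ray of eigenvalues.

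The paper's argument bypasses all of this complex-analytic machinery with a direct and elementary computation. One writes out the adjoint operator explicitly: it is $L_{\wt U_{*,1},\wt U_{*,2}}(B^*,Q^*)$ with the second adjoint boundary condition reading simply $y_2(1)=0$. Now take any eigenvector $f=\binom{f_1}{f_2}$ of the adjoint. Since the $(2,1)$-entry of $Q^*$ is $\ol{Q_{12}}$, which vanishes on $[a,1]$, the second equation of the system decouples there: $-i\,\ol{b_2^{-1}}\,f_2'=\l f_2$ on $[a,1]$, giving $f_2(x)=C_2 e^{i\ol{b_2}\l x}$. The boundary condition $f_2(1)=0$ forces $C_2=0$, hence $f_2\equiv 0$ on $[a,1]$. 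Consequently every root vector of the adjoint has vanishing second component on $[a,1]$, and any vector of the form $\binom{0}{g}$ with $\supp g\subset[a,1]$ is orthogonal to the entire root system. This exhibits an infinite-dimensional orthogonal complement immediately. The moral: compute the adjoint boundary conditions first---the condition $y_2(1)=0$ is the whole mechanism, and once you see it the proof is three lines of ODE.
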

%
%
\begin{proof}
Completeness of $L_{\wt{U}_1,\wt{U}_2}(B,Q)$ is implied by Theorem~\ref{th61}.
Let us verify that the adjoint operator is not complete. One easily checks that
$(L_{\wt{U}_1,\wt{U}_2})^*$ is given by the differential expression
\begin{equation} \label{eq:L*.expr}
 \cL^* y := -i B^{-*} y' + Q^*(x) y = \l y,
\end{equation}
and boundary conditions
\begin{equation} \label{eq:wtU*12}
 \wt{U}_{*,1}y=\ol{h_1} y_1(0)+\ol{b_1 b_2^{-1}} y_2(0) - \ol{h_2} y_1(1) = 0,
 \quad \wt{U}_{*,2}y = y_2(1) = 0,
\end{equation}
i. e. $L_{\wt{U}_1,\wt{U}_2}^* := (L_{\wt{U}_1,\wt{U}_2})^* =
L_{\wt{U}_{*,1}, \wt{U}_{*,2}}(B^*,Q^*)$.

Let $\l \in \sigma \bigl(L_{\wt{U}_1,\wt{U}_2}^*\bigr) = \{\l_j\}_1^{\infty}$ and
let $f = \binom{f_1}{f_2} \in \ker\bigl(L_{\wt{U}_1,\wt{U}_2}^* - \l\bigr)$ be
the corresponding eigenvector. Then the equation
\begin{equation*}
 \bigl(L_{\wt{U}_1,\wt{U}_2}^* - \l\bigr) f = 0
\end{equation*}
splits into the following system
\begin{equation} \label{eq:L*.eigen.system}
 \begin{cases}
 -i \ol{b_1^{-1}} f_1'(x) + \ol{Q_{21}(x)} f_2(x) = \l f_1(x), \\
 -i \ol{b_2^{-1}} f_2'(x) + \ol{Q_{12}(x)} f_1(x) = \l f_2(x).
 \end{cases}
\end{equation}
Since $Q_{12}(x) = 0$ for $x \in [a,1]$, the solution of the second equation
in~\eqref{eq:L*.eigen.system} on the interval $[a,1]$ is $f_2(x) =
C_2 e^{i\ol{b_2}\l x}$. The second boundary condition in~\eqref{eq:wtU*12}
implies that $C_2=0$, hence $f_2(x) = 0$ for $x \in [a,1]$. Inserting this
relation in the first of the equations in~\eqref{eq:L*.eigen.system} we conclude
that a solution of this system on the interval $[a,1]$ is proportional to a vector
\begin{equation}
 f(x) = \binom{e^{i\ol{b_1} \l x}}{0}, \quad x \in [a,1].
\end{equation}
Hence the system of eigenfunctions $\{u_j(\cdot)\}_{j=1}^{\infty}$ of the problem
\eqref{eq:L*.expr}--\eqref{eq:wtU*12} on the interval $[a,1]$ reads as follows
\begin{equation*}
 u_j(x):= \binom{u_{1j}(x)}{u_{2j}(x)} = \binom{e^{i\ol{b_1}\l_j x}}{0},
 \quad \l_j \in \sigma \bigl(L_{\wt{U}_1,\wt{U}_2}^*\bigr).
\end{equation*}
Therefore each vector $\binom{0}{g}\in L^2([0,1]; \bC^2)$ with $g$ satisfying
$\supp g \subset [a,1]$ is orthogonal to the system $\{u_j\}_1^{\infty}$. Thus,
the system $\{u_j\}_1^{\infty}$ is not complete in $L^2([0,1]; \bC^2)$ and its
orthogonal complement in $L^2([0,1]; \bC^2)$ is infinite dimensional.
%
%
\end{proof}
%
%
\begin{remark} \label{rem:alter.proof}
Here we show that incompleteness property of the adjoint operator in Proposition
\ref{prop:not.complete} can also be extracted from~\cite[Corollary 4.7]{LunMal15}.
Let us recall that in the case of $n=2$ it states that if one of the boundary
conditions is of the form $y_1(0)=0$ and $Q_{12}$ vanishes at the neighborhood of
0, then the system of root vectors of the operator $L_{\wt{U}_1, \wt{U}_2}(B,Q)$
is incomplete and its span is of infinite codimension in $L^2([0,1]; \bC^2)$.

Applying trivial linear transformations $i_1: \binom{y_1}{y_2} \mapsto
\binom{y_2}{y_1}$ and $i_2: y(x)\mapsto y(1-x)$ to the operator $L_{\wt{U}_{*,1},
\wt{U}_{*,2}}(B^*,Q^*)$ we reduce it to the operator $L_{\wh{U}_{*,1},
\wh{U}_{*,2}}(B^*,\wh{Q})$ where the new boundary condition $\wh{U}_{*,2}$ takes
the form $y_1(0)=0$ and $\wh{Q}_{12}(x) = \ol{Q_{12}(1-x)}$. Hence $\wh{Q}_{12}$
vanishes at the neighborhood of 0. Now incompleteness property of $L_{\wh{U}_{*,1},
\wh{U}_{*,2}}(B^*,\wh{Q})$, and hence of the operator $L_{\wt{U}_{*,1},
\wt{U}_{*,2}}(B^*,Q^*)$ is implied by~\cite[Corollary 4.7]{LunMal15}.
\end{remark}
%
%
Now we are ready to prove our first main result, Theorem
\ref{th:one.dim.perturb.normal}. It describes all pairs of operators $\{T,
\wt{S}\}$ giving an affirmative solution to \textbf{Problem 1} for the dual pair
$\{L_{\min}(B,Q), L_{\min}(B^*,Q^*) \}$ of minimal first order differential
operators admitting normal extensions $L_{U_1,U_2}(B,Q)$. It happen, in
particular, that for such pair of operators to exist \emph{a potential matrix
$Q$ is necessarily zero}.
\begin{proof}[Proof of Theorem~\ref{th:one.dim.perturb.normal}]
We need to prove that a pair of operators $\{T, \wt{S}\}$ is peculiar (i.e. $T
:= L_{U_1,U_2}(B,Q)$ is normal and $\wt{S} = L_{\wt{U}_1,\wt{U}_2}(B,Q)$ is
peculiarly complete) if and only if $Q \equiv 0$ and boundary conditions $\{U_1,
U_2\}$ and $\{\wt{U}_1, \wt{U}_2\}$ are equivalent to~\eqref{eq:U1.U2.d1.d2.intro}
and to~\eqref{eq:wt.U1.U2.d1.h1.intro}, respectively, in terms of
definition~\ref{def:bc.equiv}. Note, that~\eqref{eq:U1.U2.d1.d2.intro} is
identical to~\eqref{eq:U1.U2.d1.d2} and~\eqref{eq:wt.U1.U2.d1.h1.intro} is
identical to~\eqref{eq:U1.U2.h0.h1.again}.

\textbf{(i) Necessity.} Let $T = L_{U_1,U_2}(B,Q)$ be a normal operator and
$\wt{S} = L_{\wt{U}_1,\wt{U}_2}(B,Q)$ be peculiarly complete. Assume that $Q \not
\equiv 0$. Then by Proposition~\ref{prop:normal=const} matrix-function $Q$ is
constant of the form~\eqref{Q=const}. Thus, $Q$ is an entire matrix function and
$Q_{12}(0) Q_{12}(1) Q_{21}(0)Q_{21}(1) \ne 0$. Therefore in accordance
with~\cite[Corollary 1.7]{AgiMalOri12} this implies completeness of the system
of root vectors of the operator $L_V := L_{V_1, V_2}(B,Q)$ unless boundary
conditions $V_1(y)=V_2(y)=0$ represent an initial value problem ($y(0) = 0$ or
$y(1) = 0$). Clearly in the case of initial value problem both operators $L_V$
and $L_V^*$ have no spectrum. Since $\wt{S}$ is a complete operator, it follows
that boundary conditions $\{\wt{U}_1,\wt{U}_2\}$ does not represent initial
value problem. Clearly BVP corresponding to adjoint operator $\wt{S}^*$ is also
not an initial value problem. Hence $\wt{S}^*$ is also complete which
contradicts our assumption. Thus, $Q \equiv 0$. Equivalence of boundary conditions
$\{U_1, U_2\}$ to conditions~\eqref{eq:U1.U2.d1.d2} is now implied by
Lemma~\ref{lem:normal.BC}.

Next we investigate boundary conditions $\{\wt{U}_1,\wt{U}_2\}$ generated the
operator $\wt{S}$. If they are weakly $B$-regular (see
Definition~\ref{def:weakly.regular}) then, by Theorem \ref{th2.1_MO}, both
operators $\wt{S}$ and $\wt{S}^*$ are complete. Thus, boundary conditions
$\{\wt{U}_1,\wt{U}_2\}$ are not weakly $B$-regular. By~\cite[Lemma 2.7]{AgiMalOri12}
it means that they are either equivalent to BC~\eqref{eq:U1.U2.h0.h1.again} with
$h_1 h_2 \ne 0$ or to the boundary conditions
\begin{equation*}
 y_2(1) = 0, \quad \text{and} \quad a_{21} y_1(0) + a_{22} y_2(0) + a_{23} y_1(1) = 0.
\end{equation*}
In the latter case Proposition~\ref{prop:not.complete} implies that the system
of root vectors of the corresponding BVP is not complete. Thus, boundary
conditions of the operator $\wt{S} = L_{\wt{U}_1,\wt{U}_2}(B,Q)$ are necessarily
equivalent to boundary conditions~\eqref{eq:U1.U2.h0.h1.again}.

\textbf{(ii) Sufficiency.} Let $Q \equiv 0$ and BC of the operators $T =
L_{U_1,U_2}(B,Q)$ and $\wt{S} = L_{\wt{U}_1,\wt{U}_2}(B,Q)$ are equivalent
to~\eqref{eq:U1.U2.d1.d2} and~\eqref{eq:U1.U2.h0.h1.again}, respectively. Since
$Q$ vanishes at 0 and 1, Proposition~\ref{prop:not.complete} yields that $\wt{S}$
is peculiarly complete. Normality of the operator $T = L_{C,D}(B,0)$ follows from
Corollary~\ref{lem:normal.BC}. Clearly, resolvent difference $(\wt{S} - \l)^{-1}
- (T - \l)^{-1}$ is at most two-dimensional. Thus, $\{T, \wt{S}\}$ is a peculiar
pair.

Finally, we investigate the rank of the resolvent difference $(\wt{S} - \l)^{-1}
- (T - \l)^{-1}$ for the operators $\wt{S}$ and $T$ generated by boundary
conditions~\eqref{eq:U1.U2.h0.h1.again} and~\eqref{eq:U1.U2.d1.d2}, respectively.
Let $J_{jk}$ be determinants given by~\eqref{eq:Ajk.Jjk} for linear forms
\eqref{eq:U1.U2.d1.d2}. It follows from~\eqref{eq:U1.U2.d1.d2} that $J_{42}=0$,
$J_{14}=-d_2$ and $J_{34}=d_1 d_2$. Hence, condition~\eqref{eq:h1.h0.one.dim}
of Proposition~\ref{prop:resolv.dif} transforms into $h_1 = d_1 h_2$. Thus, by
Proposition~\ref{prop:resolv.dif} the corresponding resolvent difference is
one-dimensional if and only if $h_1 = d_1 h_2$.
\end{proof}
Proposition~\ref{prop:normal=const} shows that the class of normal operators
generated by BVP for equation~\eqref{eq:normal.2x2} is disappointedly small.
This result together with Theorem~\ref{th:one.dim.perturb.normal} makes it
reasonable to pose more general version of \textbf{Problem 1}. Namely we consider
\textbf{Problem 2} just replacing in formulation of \textbf{Problem 1} a normal
operator $T$ by an operator similar either to a normal operator or to an almost
normal operator.
%
%
\begin{theorem} \label{th:one.dim.perturb.similar_to_normal}
Let $b_1 b_2^{-1} \notin \bR$, and let $T := L_{U_1, U_2}(B, Q)$ and $\wt{S} :=
L_{\wt{U}_1,\wt{U}_2}(B, Q)$. Assume also that $Q_{21}(\cdot)$ admits a
holomorphic continuation to an entire function and $Q_{12} \equiv 0$, i.e.
condition \eqref{eq:Q12=0} is satisfied with $a = 0$. Let, finally, boundary
conditions $\{U_1,U_2\}$, $\{\wt{U}_1,\wt{U}_2\}$, be given by
\begin{align}
\label{eq:U1.U2.d1.d2.riesz}
 U_1(y) &= y_1(0) - d_1 y_1(1) = 0, \qquad
 U_2(y) = y_2(0) - d_2 y_2(1) = 0, \\
\label{eq:wt.U1.U2.d1.h1.riesz}
 \wt{U}_1(y) &= y_1(0) - h_1 y_2(0) = 0, \qquad
 \wt{U}_2(y) = y_1(1) - h_2 y_2(0) = 0,
\end{align}
where $d_1, d_2, h_1, h_2 \ne 0$. Then:
\begin{itemize}
\item[(i)] The operator $T$ is similar to an almost normal operator. In
particular, each eigenvalue of $T$ but finitely many, is algebraically simple
and the system of root vectors of $T$ forms a Riesz basis.
\item[(ii)] Assume in addition that $\|Q_{21}\|_{C[0,1]}$ is sufficiently small
and the algebraic condition~\eqref{eq:aj.ne.bc} holds. Then the operator $T$ is
similar to a normal operator. In particular, the system of its eigenvectors
forms a Riesz basis.

\item[(iii)] The operator $\wt{S}$ is peculiarly complete in $L^2([0,1]; \bC^2)$.

\item[(iv)] Resolvent difference $(\wt{S} - \l)^{-1} - (T - \l)^{-1}$ is
one-dimensional if and only if $h_1 = d_1 h_2$.
\end{itemize}
\end{theorem}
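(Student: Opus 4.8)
The plan is to assemble the four assertions from results already established in the paper, exploiting the special structure imposed here, namely $Q_{12} \equiv 0$ with $Q_{21}$ entire.

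For (i) and (ii) I would first observe that, under the present hypotheses, the potential matrix $Q$ belongs to $A(\bD_R; \bC^{2 \times 2})$ for every $R>0$: the entry $Q_{12} \equiv 0$ extends trivially to an entire function, and $Q_{21}$ is assumed to admit an entire continuation. Moreover, since $Q_{12} \equiv 0$, the norm~\eqref{eq:Q.norm} reduces to $\|Q\|_{C[0,1]} = \|Q_{21}\|_{C[0,1]}$, and the boundary conditions~\eqref{eq:U1.U2.d1.d2.riesz} are precisely the quasi-periodic conditions~\eqref{eq:riesz.U1.U2.d1.d2} with $d_1 d_2 \ne 0$. Hence Theorem~\ref{th:similarity-to_normal}(i) applies verbatim to $T = L_{U_1,U_2}(B,Q)$ and yields (i): $T$ is similar to an almost normal operator, all but finitely many eigenvalues are algebraically simple, and the root vectors form a Riesz basis. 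For (ii), when in addition $\|Q_{21}\|_{C[0,1]} = \|Q\|_{C[0,1]}$ is small enough and the arithmetic condition~\eqref{eq:aj.ne.bc} holds, Theorem~\ref{th:similarity-to_normal}(ii) gives that $T$ is similar to a normal operator, so its eigenvectors form a Riesz basis.

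Assertion (iii) follows directly from Proposition~\ref{prop:not.complete}: the matrices $B,Q$ have the form~\eqref{eq:normal.BQ}, the entry $Q_{12} \equiv 0$ clearly satisfies~\eqref{eq:Q12=0} (one may take any $a \in (0,1)$), and the boundary conditions~\eqref{eq:wt.U1.U2.d1.h1.riesz} coincide with~\eqref{eq:U1.U2.h0.h1.again} since $h_1 h_2 \ne 0$. Therefore $\wt{S} = L_{\wt{U}_1,\wt{U}_2}(B,Q)$ is peculiarly complete in $L^2([0,1];\bC^2)$ in the sense of Definition~\ref{def:peculiar}.

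For (iv) I would compute the determinants $J_{jk}$ from~\eqref{eq:Ajk.Jjk} for the quasi-periodic forms~\eqref{eq:U1.U2.d1.d2.riesz}: the only nonzero coefficients are $a_{11}=1$, $a_{13}=-d_1$, $a_{22}=1$, $a_{24}=-d_2$, whence a short direct calculation gives $J_{14}=-d_2$, $J_{42}=0$, $J_{34}=d_1 d_2$. Substituting these into the one-dimensionality criterion~\eqref{eq:h1.h0.one.dim} of Proposition~\ref{prop:resolv.dif}, namely $J_{34}h_2 + J_{14}h_1 = J_{42}$, one obtains $d_2(d_1 h_2 - h_1) = 0$, which, as $d_2 \ne 0$, is equivalent to $h_1 = d_1 h_2$. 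Since the two boundary value problems are genuinely distinct and both operators have discrete spectrum (so their common resolvent set is nonempty, cf.\ Proposition~\ref{prop:Lambda.asymp} and Theorem~\ref{th61}), Proposition~\ref{prop:resolv.dif}(i) then delivers (iv). The proof requires no new analytic input: the only care needed is the bookkeeping of hypotheses — verifying the membership $Q \in A(\bD_R; \bC^{2\times 2})$, the reduction $\|Q\|_{C[0,1]} = \|Q_{21}\|_{C[0,1]}$, and the matching of the boundary conditions with those in the cited statements — so I do not expect any genuine obstacle.
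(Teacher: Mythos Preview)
Your proposal is correct and follows essentially the same approach as the paper: parts (i) and (ii) are obtained by invoking Theorem~\ref{th:similarity-to_normal} after noting that $Q \in A(\bD_R;\bC^{2\times 2})$ and $\|Q\|_{C[0,1]} = \|Q_{21}\|_{C[0,1]}$, while (iii) and (iv) are handled via Proposition~\ref{prop:not.complete} and the computation of $J_{14}$, $J_{42}$, $J_{34}$ leading to Proposition~\ref{prop:resolv.dif}(i), exactly as in the proof of Theorem~\ref{th:one.dim.perturb.normal}. Your version is in fact slightly more explicit in verifying the bookkeeping hypotheses than the paper's own proof.
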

%
%
\begin{proof}
(i) This statement is immediate from Theorem \ref{th:similarity-to_normal}(i).

(ii) By Theorem~\ref{th:similarity-to_normal},~(ii), there exists $C > 0$ such
that the operator $L$ is similar to a normal operator whenever $\|Q\|_{C[0,1]}
= \|Q_{21}\|_{C[0,1]}$ is sufficiently small and condition~\eqref{eq:aj.ne.bc}
holds.

(iii)-(iv) These statements are proved in just the same way as
in Theorem~\ref{th:one.dim.perturb.normal}.
\end{proof}
Finally, we consider \textbf{Problem 3}, the most general version of
\textbf{Problem 1}. Namely, \textbf{Problem 3} is obtained form \textbf{Problem
1} by replacing the normality of $T$ by the property of its roots vectors to
constitute a Riesz basis with parentheses in $\fH$. It happen that for the dual
pair $\{L_{\min}(B,Q), L_{\min}(B^*,Q^*)\}$ \textbf{Problem 3} has an
affirmative solution for much wider class of potential matrices $Q(\cdot)$ than
in both previous cases.
%
%
\begin{theorem} \label{th:one.dim.perturb.riesz}
Let $T := L_{U_1, U_2}(B, Q)$ and $\wt{S} := L_{\wt{U}_1,\wt{U}_2}(B, Q)$. Let
in addition, $Q \in L^{\infty}([0,1]; \bC^{2 \times 2})$, $Q_{12}(\cdot)$ \
vanishes at a neighborhood of the endpoint 1, i.e. condition \eqref{eq:Q12=0}
holds, and let boundary conditions $\{U_1,U_2\}$, $\{\wt{U}_1,\wt{U}_2\}$ be
given by \eqref{eq:U1.U2.d1.d2.riesz}--\eqref{eq:wt.U1.U2.d1.h1.riesz} with
$d_1 d_2 h_1 h_2 \ne 0$. Then:
\begin{itemize}
\item[(i)] System of root vectors of the operator $T$ forms a Riesz basis with
parentheses in $L^2([0,1]; \bC^2)$.
\item[(ii)] Operator $\wt{S}$ is peculiarly complete in $L^2\([0,1];\bC^2\)$.
\item[(iii)] Resolvent difference $(\wt{S} - \l)^{-1} - (T - \l)^{-1}$ is
one-dimensional if and only if $h_1 = d_1 h_2$.
\end{itemize}
\end{theorem}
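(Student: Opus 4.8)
The plan is to obtain all three assertions by assembling results already established above; no genuinely new argument is required, which is exactly why \textbf{Problem 3} admits a far larger class of potentials than \textbf{Problems 1} and \textbf{2}. For part (i): the boundary conditions \eqref{eq:U1.U2.d1.d2.riesz} are literally the quasi-periodic conditions \eqref{eq:riesz.U1.U2.d1.d2}, and the hypothesis $Q\in L^{\infty}([0,1];\bC^{2\times 2})$ is precisely the one needed by Proposition~\ref{prop:basis.per}. So I would simply invoke that proposition to conclude that the system of root vectors of $T=L_{U_1,U_2}(B,Q)$ forms a Riesz basis with parentheses in $L^2([0,1];\bC^2)$; in particular, by Lemma~\ref{lem:similarity_to_normal}(iii), $T$ is a spectral operator, and it has discrete spectrum since its resolvent is compact (Lemma~\ref{lem:resolv.gen}).

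For part (ii): the conditions \eqref{eq:wt.U1.U2.d1.h1.riesz} coincide with \eqref{eq:U1.U2.h0.h1.again}, the pair $B,Q$ has the form \eqref{eq:normal.BQ} (as $L^{\infty}\subset L^1$), and $Q_{12}$ vanishes near the endpoint $1$ by \eqref{eq:Q12=0}. Hence Proposition~\ref{prop:not.complete} applies verbatim and yields that $\wt S=L_{\wt U_1,\wt U_2}(B,Q)$ is peculiarly complete: completeness of $\wt S$ is Theorem~\ref{th61}, while incompleteness of $\wt S^{*}$ together with infinite codimension of the span of its root vectors is the content of (the proof of) Proposition~\ref{prop:not.complete}, alternatively of Remark~\ref{rem:alter.proof}. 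In particular $\wt S$ also has discrete spectrum.

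For part (iii): this is exactly the computation at the end of the proof of Theorem~\ref{th:one.dim.perturb.normal}, carried out now under the weaker constraint $d_1 d_2\ne 0$ instead of $|d_1|=|d_2|=1$; since the moduli of $d_1,d_2$ never enter, that computation transfers verbatim. First I would observe that $T\ne\wt S$ (the boundary forms \eqref{eq:U1.U2.d1.d2.riesz} and \eqref{eq:wt.U1.U2.d1.h1.riesz} are structurally incompatible) and that, by (i) and (ii), both operators have discrete spectrum, so $\rho(T)\cap\rho(\wt S)$ is co-countable; fix $\l$ in it. Reading off the coefficients of \eqref{eq:U1.U2.d1.d2.riesz} via \eqref{eq:Ajk.Jjk} gives $J_{42}=0$, $J_{14}=-d_2$, $J_{34}=d_1 d_2$, so the one-dimensionality criterion \eqref{eq:h1.h0.one.dim} of Proposition~\ref{prop:resolv.dif}(i), namely $J_{34}h_2+J_{14}h_1=J_{42}$, becomes $d_1 d_2 h_2-d_2 h_1=0$, i.e.\ $h_1=d_1 h_2$ (since $d_2\ne 0$). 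By Proposition~\ref{prop:resolv.dif}(i) this is exactly the condition for $(\wt S-\l)^{-1}-(T-\l)^{-1}$ to be one-dimensional, and by Proposition~\ref{prop:rankR=rankCD} the rank of the resolvent difference does not depend on the chosen $\l$, so the conclusion is $\l$-free.

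The main obstacle is, in effect, absent: the theorem simply repackages Proposition~\ref{prop:basis.per}, Proposition~\ref{prop:not.complete}, Theorem~\ref{th61} and Proposition~\ref{prop:resolv.dif}. The only two steps that genuinely require a line of justification are that $\rho(T)\cap\rho(\wt S)\ne\emptyset$ and that $T\ne\wt S$ (so that Proposition~\ref{prop:resolv.dif}, equivalently Corollary~\ref{cor:one.dim.crit.2x2}, applies), both of which are immediate from parts (i)--(ii) and the explicit shape of the boundary conditions.
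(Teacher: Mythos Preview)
Your proposal is correct and follows essentially the same route as the paper: part (i) is reduced to Proposition~\ref{prop:basis.per}, part (ii) to Proposition~\ref{prop:not.complete} (and Theorem~\ref{th61}), and part (iii) to the $J_{jk}$-computation and Proposition~\ref{prop:resolv.dif} exactly as in the proof of Theorem~\ref{th:one.dim.perturb.normal}. Your additional remarks that $T\ne\wt S$ and $\rho(T)\cap\rho(\wt S)\ne\emptyset$ are implicit in the paper's argument and only make the write-up more careful.
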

%
%
\begin{proof}
Statement (i) is immediate from Proposition~\ref{prop:basis.per}. Other
statements are proved in the same way as in Theorem~\ref{th:one.dim.perturb.normal}.
\end{proof}
Finally, we illustrate main results by considering the following example which
was our first initial observation while studying this problem (see in this
connection also~\cite{BarYak16}).
%
%
\begin{example}
Consider equation~\eqref{eq:normal.2x2} with $Q=0$. Setting $d_1 = d_2 = -1$
in~\eqref{eq:U1.U2.d1.d2} we arrive at antiperiodic boundary conditions
\begin{equation}\label{eq:normal.antiper}
 \wt U_1(y) = y_1(0) + y_1(1) = 0, \qquad
 \wt U_2(y) = y_2(0) + y_2(1) = 0.
\end{equation}
Denote by $L_{\rm{ap}}$ the operator generated in $L^2([0,1];\bC^2)$ by the
boundary value problem~\eqref{eq:normal.2x2},~\eqref{eq:normal.BQ},
\eqref{eq:normal.antiper}.
Assuming that $h_2 \ne h_1$ one easily finds inverse operators
$L^{-1}_{U_1,U_2}(\mathbf 0)$ and $L^{-1}_{\rm{ap}}(\mathbf 0)$:
\begin{equation}\label{8}
 L^{-1}_{U_1,U_2}(\mathbf 0)f = \begin{pmatrix} y_1(x) \\ y_2(x) \end{pmatrix}
 = \begin{pmatrix}
 i b_1 \left[\int^x_0 f_1(t)dt+\frac{h_1}{h_2-h_1}\int^1_0 f_1(t)dt\right] \\
 ib_2\int^x_0 f_2(t)dt + \frac{ib_1}{h_2-h_1}\int^1_0 f_1(t)dt
 \end{pmatrix}
\end{equation}
and
\begin{equation}\label{9}
 L^{-1}_{\rm{ap}}(\mathbf 0) f = \begin{pmatrix} y_1(x) \\ y_2(x) \end{pmatrix}
 = \begin{pmatrix}
 i b_1 \int^x_0 f_1(t)dt - i\frac{b_1}{2}\int^1_0 f_1(t)dt \\
 i b_2\int^x_0 f_2(t)dt - i\frac{b_2}{2}\int^1_0 f_2(t)dt
 \end{pmatrix}\,,
\end{equation}
Further, let
\begin{equation}
e_1=\binom{1}{0}\mathbf 1, \quad e_2 = \binom{0}{1}\mathbf 1 \in
L^2([0,1];\bC^2).
\end{equation}
Combining relation~\eqref{8} with~\eqref{9} one gets
\begin{equation}
 \left(L^{-1}_{U_1,U_2}(\mathbf 0) - L^{-1}_{\rm{ap}}(\mathbf 0) \right)f
 = \frac{ib_1}{h_2-h_1}(f,e_1)
 \begin{pmatrix} 2^{-1}(h_1+h_2) \\ 1 \end{pmatrix}
 + \frac{ib_2}{2}(f,e_2)\binom{0}{1}.
\end{equation}
Thus, $\rank(L^{-1}_{U_1,U_2}(\mathbf 0) - L^{-1}_{\rm{ap}}(\mathbf 0)) \le 2$ and,
if $h_2 + h_1 =0$, then the resolvent difference $L^{-1}_{U_1,U_2}(\mathbf 0) -
L^{-1}_{\rm{ap}}(\mathbf 0)$ is one-dimensional. Moreover, the operator $L_{U_1,
U_2}(\mathbf 0)$ is complete while its adjoint $L_{U_1,U_2}(\mathbf 0)^*$ is not
and the codimension of the span of its root functions is infinite.
\end{example}
%
%
\begin{remark}
Note that the authors of~\cite{BarYak16} (see also~\cite{BarYak15}) investigated
in great detail the completeness property of one-dimensional \textbf{non-weak
perturbations} of a compact self-adjoint operator $A$ in $\fH$. They obtain new
criteria for completeness of rank one (non-dissipative) perturbations, for joint
completeness of the operator and its adjoint, as well as for the spectral synthesis.
\end{remark}
%
%
\begin{remark}
Note, that \emph{non-degenerate separated} boundary conditions are always
\emph{strictly regular}, hence the root vectors of the corresponding BVP
constitute a \emph{Riesz basis}~\cite{DjaMit12UncDir}. Earlier these results
were proved for Dirac operator with $Q \in L^2([0,1]; \bC^{2 \times 2})$ by
P.~Djakov and B.~Mityagin~\cite{DjaMit12UncDir} (see also~\cite{Bask11}).
\end{remark}
%
%
\begin{example}\label{ex:Dirac_with_Q=0}
Let us briefly discuss BVPs for Dirac type systems.

(i) First we consider BVPs~\eqref{eq:system.nxn}--\eqref{eq:BC.nxn} with a
nonsingular $n\times n$ diagonal matrix $B = \diag(b_1, b_2, \ldots, b_n) = B^*$
and $Q=0$. It is shown in~\cite{MalOri12} that in this case the operator
$L_{C,D}(B,0)$ \emph{is complete if and only if} BC~\eqref{eq:BC.nxn} are regular
$(\Longleftrightarrow \ \text{weakly regular})$, i.e. the conditions~\eqref{2.4Intro}
are satisfied. Therefore, by Theorem \ref{th2.1_MO}, the operators $L_{C,D}(B,0)$
and $L_{C,D}(B,0)^*$ are complete only simultaneously.

This example, as well Example~\ref{Ex_S-L_oper_Intro}, gives a negative solution
to \textbf{Problem 1} for simplest Dirac-type and Sturm-Liouville operators,
respectively. Both examples are opposite to the one given by the
BVP~\eqref{eq:system}--\eqref{eq:BC} with $Q=0$ and the diagonal matrix $B$
satisfying $b_1 b_2^{-1} \not \in \bR$.

(ii) Next we consider $2\times 2$ Dirac type system~\eqref{eq:normal.2x2} with
\begin{equation}\label{eq:Dirac.BQ}
 B = \text{diag}(b_1,b_2)= B^*, \quad \text{and}\quad
 Q = \begin{pmatrix} 0 & Q_{12} \\ Q_{21}& 0 \end{pmatrix}
 \in L^1([0,1];\bC^{2\times 2}).
\end{equation}
assuming that $b_1 b_2 < 0$. In the case $Q(\cdot) \ne 0$ several sufficient
conditions of completeness of non-regular (and even degenerate)
BVPs~\eqref{eq:system}--\eqref{eq:BC} (operators $L_{U_1,U_2}(B,Q)$) were obtained
in~\cite{MalOri12},~\cite{LunMal13Dokl}. An interesting feature of these results
is that they ensure completeness of both operators
$L_{U_1,U_2}(B,Q)$ and $(L_{U_1,U_2}(B,Q))^*$.

In connection with \textbf{Problem 1} it is interesting to examine the
BVP~\eqref{eq:normal.2x2},~\eqref{eq:U1.U2.h0.h1.again}. Since $J_{14} = J_{24}
= 0$ and $J_{32} J_{13} = h_1 \cdot 1 \ne 0,$ boundary conditions
\eqref{eq:U1.U2.h0.h1.again} meet all the assumptions of Theorem~5(i) from
\cite{LunMal13Dokl} but one. Namely, Theorem~5(i) from~\cite{LunMal13Dokl}
ensures completeness of the problem~\eqref{eq:normal.2x2},
\eqref{eq:U1.U2.h0.h1.again} and its adjoint provided that $Q\in W^{k,2}([0,1];
\bC^{2 \times 2})$ and $Q_{12}^{(j)}(1) \ne 0$ for some $j \in \{0,\ldots, k-1\}$.
It happens, in particular, if either $Q$ is analytic at the endpoint $1$ and
$Q(\cdot)\not \equiv 0$, or $Q\in C^{\infty}([1-\eps,1]; \bC^{2 \times 2})$ and
$Q_{12}^{(j-1)}(1) \ne 0$ for some $j \in \bN$. In all these cases the
\textbf{Problem 1} with $Q=Q^*$ \emph{has a negative solution}.

However, it remains open for the operator $L$ generated by the
problem~\eqref{eq:normal.2x2},~\eqref{eq:U1.U2.h0.h1.again} with
non-analytic $Q = Q^* \in C^{\infty}([1-\eps, 1]; \bC^{2 \times 2})$ and
satisfying $Q_{12}^{(j-1)}(1) = 0$ for all $j \in \bN$.
\end{example}
%
%
\begin{example} \label{ex:Sturm-Liouville}
Consider Sturm-Liouville equation
\begin{equation}\label{eq:St-Liouv_equation}
 L(q)y := -y'' + q(x)y = \l y, \qquad q\in L^1[0,1],
\end{equation}
subject to general linear boundary conditions
\begin{equation}\label{eq:BC_for_St-Liouv}
	U_j(y) := a_{j 1}y(0) + a_{j 2}y'(0) + a_{j 3}y(1) + a_{j 4}y'(1)= 0,
	\qquad j \in \{1,2\},
\end{equation}
where the linear forms $\{U_j\}_{j=1}^2$ are assumed to be linearly independent.
Denote by $L_{U_1, U_2}(q)$ the operator associated in $L^2[0,1]$ with the
BVP~\eqref{eq:St-Liouv_equation}--\eqref{eq:BC_for_St-Liouv}.

Recall that BC for Sturm-Liouville equation are called nondegenerate if the
characteristic determinant $\Delta(\cdot)$ of the BVP is not reduced to a
constant, $\Delta \ne \const$. It is well known (see e.g.~\cite[Theorem~1.3.1]{Mar86})
that the BVP~\eqref{eq:St-Liouv_equation}--\eqref{eq:BC_for_St-Liouv} (the
operator $L_{U_1, U_2}(q)$) is complete whenever BC are nondegenerate. In this
case both operators $L_{U_1, U_2}(q)$ and $(L_{U_1, U_2}(q))^*$ are complete
simultaneously.

Passing to degenerate BC we note they are equivalent (see e.g.~\cite{Mal08}) to
a pair of conditions of the following one-parameter family
\begin{equation} \label{eq:Degen_BC_for_St-Liouv}
 U_{1,\alpha}(y) := y(0) - \alpha y(1), \quad
 U_{2,\alpha}(y) := y'(0) + \alpha y'(1), \qquad \alpha\in \bC\setminus\{0\}.
\end{equation}
We put $L_{\alpha}(q) := L_{U_{1,\alpha}, U_{2,\alpha}}(q)$ and note that the
adjoint operator $L_{\alpha}(q)^* := (L_{\alpha}(q))^* = L_{\beta}(\ol{q})$, i.e.
$L_{\alpha}(q)^*$ is given by expression~\eqref{eq:St-Liouv_equation} with
$\ol{q}$ instead of $q$ and the BC $U_{1,\beta}$ and $U_{2,\beta}$ of the form~\eqref{eq:Degen_BC_for_St-Liouv} with $\beta = - 1/\ol{\alpha}$.

Completeness property for BVP~\eqref{eq:St-Liouv_equation}--\eqref{eq:BC_for_St-Liouv}
with degenerate BC was investigated in~\cite{Mal08} and~\cite{Mak14}. All
known sufficient conditions ensure completeness of operators $L_{q,\alpha}(q)$
for all $\alpha\in \bC\setminus\{0\}$ and, in particular, completeness of
$L_{\alpha}(q)^* = L_{\beta}(\ol{q})$. For instance, a result
from~\cite{Mal08} guaranties completeness of these operators whenever
$q_{\rm{odd}}(x) := q(x) - q(1 - x)$ is smooth and $q_{\rm{odd}}^{(k-1)}(0)
\ne 0$ for some $k \in \bN$. However, the following problem remains open:

Is there exist a non-analytic potential $q = \ol{q} \in C^{\infty}[0,1]$
satisfying $q_{\rm{odd}}^{(j-1)}(0) = 0$ for all $j \in \bN$ and such that the
operator $L_{q,\alpha}$ is peculiarly complete?

The existence of such a real potential $q$ would lead to a positive solution to
\textbf{Problem 1} for Sturm-Liouville operator~\eqref{eq:St-Liouv_equation}
with such $q$. However, we conjecture that the answer is negative.
\end{example}
%
%
\section{Appendix. Regular and weakly regular boundary value problems}
\label{sec:appendix}
%
%
Let us recall the definition of regular boundary conditions from~\cite[{p.89}]{BirLan23}.
We use the following construction. Let $A=\diag(a_1, \ldots, a_n)$ be a diagonal
matrix with entries $a_k$ (not necessarily distinct) that are not lying on the
imaginary axis, $\Re a_k \ne 0$. Starting from arbitrary matrices $C, D \in
\bC^{n \times n}$, we define the auxiliary {$n \times n$} matrix $T_A(C,D)$ as
follows:
\begin{itemize}
\item if $\Re a_k>0$, then the $k$th column in the matrix $T_A(C,D)$ coincides
 with the $k$th column of the matrix $C$,
\item if $\Re a_k<0$, then the $k$th column in the matrix $T_A(C,D)$ coincides
 with the $k$th column of the matrix $D$.
\end{itemize}
It is clear that $T_A(C,D)=T_{-A}(D,C)$.
%
%
\begin{definition} \label{def1.1}
The boundary conditions~\eqref{eq:BC.nxn} are called \emph{regular} whenever
$\det T_{izB}(C,D) \ne 0$ for every \emph{admissible} $z \in \bC,$ i.e. for such
$z$ that $\Re(izB)$ is nonsingular.
\end{definition}
%
%
To understand this definition better consider the lines $\{\l \in \bC : \Re(i
b_j \l)=0\}$, $j \in \{1,2,\ldots,n\}$, of the complex plane. They divide the
complex plane in {$m = 2m' \le 2n$} sectors. Denote these sectors by $\sigma_1,
\sigma_2, \ldots \sigma_m$. Let $z_1, z_2, \ldots, z_m$ be complex numbers such
that $z_j$ lies in the interior of $\sigma_j, j \in \{1,\ldots, m\}$. The
boundary conditions~\eqref{eq:BC.nxn} are regular whenever
\begin{equation}\label{regular}
 \det T_{i z_j B}(C,D) \ne 0 , \qquad j \in \{1,\ldots, m\}.
\end{equation}

Let us recall the concept of \emph{weakly regular} boundary conditions
from~\cite{MalOri12} and completeness results for BVP with such conditions.
%
%
\begin{definition}[\cite{MalOri12}] \label{def:weakly.regular}
The boundary conditions~\eqref{eq:BC.nxn} are called \emph{weakly $B$-regular}
(or, simply, weakly regular) if there exist three complex numbers $\{z_j\}^3_1$
satisfying the following conditions:

(a) the origin is an interior point of the triangle $\triangle_{z_1z_2z_3};$

(b) $\det \, T_{i z_j B}(C,D) \ne 0$ for $j \in \{1,2,3\}$.
\end{definition}
%
%
\noindent The following result is contained
in~\cite[Theorem 1.2 and Corollary 3.3]{MalOri12}.
%
\begin{theorem} \label{th2.1_MO}
Let $Q \in L^1\([0,1]; \bC^{n\times n}\)$ and let boundary conditions
\eqref{eq:BC.nxn} be weakly $B$-regular. Then the system of root functions of
the BVP~\eqref{eq:system.nxn}--\eqref{eq:BC.nxn} (of the operator $L_{C,D}(Q)$)
is complete and minimal in $L^2\([0,1]; \bC^n\)$.

Moreover, the system of root functions of the adjoint operator $L_{C,D}(Q)^*$
is also complete and minimal in $L^2\([0,1]; \bC^n\)$.
\end{theorem}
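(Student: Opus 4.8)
The plan is to follow the classical Keldysh-type scheme: prove completeness of the root functions via growth estimates for the resolvent and a Phragm\'en--Lindel\"of argument over a covering of $\bC$ by angular sectors. Recall that the system of root functions of $L_{C,D}(Q)$ is complete in $\fH := L^2([0,1];\bC^n)$ if and only if the following holds: whenever $f \in \fH$ is such that for every $g \in \fH$ the scalar function $\lambda \mapsto (R_{C,D}(\lambda) g, f)_{\fH}$ extends from $\rho(L_{C,D})$ to an entire function, then $f = 0$. Indeed, $f$ is orthogonal to every root function exactly when the principal parts of the Laurent expansions of $R_{C,D}(\lambda) g$ at the eigenvalues --- whose Laurent coefficients are finite-rank operators with ranges in the corresponding root subspaces --- are annihilated by $(\cdot, f)_\fH$, i.e.\ when $\lambda\mapsto (R_{C,D}(\lambda)g,f)_\fH$ has removable singularities at all points of $\sigma(L_{C,D})$. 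So I would fix such an $f$ and set $F_g(\lambda) := (R_{C,D}(\lambda)g, f)_{\fH}$, an entire function, for each $g$.

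First I would insert the resolvent representation of Lemma~\ref{lem:resolv.gen}: by~\eqref{eq:resolv.formula}, $R_{C,D}(\lambda) g = K_\lambda g - \Phi(\cdot,\lambda) M_{C,D}(\lambda)(K_\lambda g)(1)$ with $M_{C,D}(\lambda) = (C + D\Phi(1,\lambda))^{-1} D$. Choosing $g$ so that $(K_\lambda g)(1)$ realizes an arbitrary prescribed vector $u\in\bC^n$ (as in the proof of Lemma~\ref{lem:rank.R-wtR}, via~\eqref{eq:Psi.def}) and applying Cramer's rule to $(C+D\Phi(1,\lambda))^{-1}$, one rewrites $F_g(\lambda) = \Delta(\lambda)^{-1} N_g(\lambda)$, where $\Delta(\lambda) = \det(C + D\Phi(1,\lambda))$ is the characteristic determinant and $N_g(\lambda)$ is an entire function assembled from $f$, the entries $\varphi_{jk}(1,\lambda)$, and the cofactors of $C + D\Phi(1,\lambda)$. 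Variation-of-constants estimates for the Cauchy problem~\eqref{eq:Phi.l.def} with $Q \in L^1$ give, for each $x$, $\varphi_{jk}(x,\lambda) = \delta_{jk} e^{i b_k \lambda x} + (\text{Riemann--Lebesgue correction})$ uniformly on rays; hence on each of the $m$ open sectors $\sigma_1,\dots,\sigma_m$ cut out by the lines $\{\Re(i b_j\lambda) = 0\}$ there is a single dominant exponential $e_\sigma(\lambda)$ controlling the growth of both $N_g(\lambda)$ and, via the term $T_{i z_\sigma B}(C,D)$ in the expansion of $\det(C+D\Phi(1,\lambda))$, of $\Delta(\lambda)$ itself. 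Here the \emph{weak $B$-regularity} hypothesis (Definition~\ref{def:weakly.regular}) enters decisively: it supplies three admissible directions $z_1, z_2, z_3$ with $\det T_{i z_j B}(C,D)\neq 0$, so that on the sectors containing these directions $|\Delta(\lambda)| \ge c\,|e_\sigma(\lambda)|$ for large $|\lambda|$ off the zero set, whence $F_g$ is bounded on those sectors away from small discs around the eigenvalues; since $F_g$ is entire, the maximum principle upgrades this to boundedness on the full sectors, and moreover $F_g(\lambda)\to 0$ along at least one ray in each such sector.

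The decisive step is then the Phragm\'en--Lindel\"of / Liouville conclusion. Condition (a) of Definition~\ref{def:weakly.regular} --- that the origin lies in the interior of the triangle $\triangle_{z_1 z_2 z_3}$ --- is precisely what guarantees that the three chosen directions are spread out enough that $\bC$ is covered by finitely many closed angular sectors, each of opening strictly less than $\pi$ and each lying, after a small enlargement, in a region on whose two bounding rays $F_g$ has been shown to be bounded. Since $F_g$ is entire of finite exponential type, Phragm\'en--Lindel\"of applied sector by sector shows $F_g$ is bounded on all of $\bC$, hence constant by Liouville, hence identically zero because it tends to $0$ along a ray. As $g$ ranged over a set rich enough to determine $R_{C,D}(\lambda)^* f$, we obtain $R_{C,D}(\lambda)^* f = 0$ for $\lambda \in \rho(L_{C,D})$, and injectivity of the resolvent forces $f = 0$; this proves completeness. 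Minimality is comparatively soft: the Laurent coefficients of $R_{C,D}(\lambda)$ and of $R_{C,D}(\lambda)^{*} = (L_{C,D}(Q)^*-\ol\lambda)^{-1}$ at the eigenvalues produce, after the standard normalization, a biorthogonal pair of root systems of $L_{C,D}(Q)$ and $L_{C,D}(Q)^*$, and existence of a biorthogonal family forces the root-function system of $L_{C,D}(Q)$ to be minimal. For the ``moreover'' statement I would note that $L_{C,D}(Q)^* = L_{C',D'}(B^*,Q^*)$ for the adjoint boundary matrices $(C',D')$, and check from the relation between $T_{A}(C,D)$ and the corresponding auxiliary matrix for $(C',D')$ (equivalently, that the characteristic determinant of $L_{C',D'}(B^*,Q^*)$ is, up to a nonvanishing factor, $\ol{\Delta_{C,D}(\ol\lambda)}$) that weak $B$-regularity of $(C,D)$ is equivalent to weak $B^*$-regularity of $(C',D')$; applying the first part to $L_{C,D}(Q)^*$ then yields completeness and minimality of its root functions.

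The step I expect to be the main obstacle is making the growth dichotomy uniform: for each sector $\sigma$, and with $Q\in L^1$ only (so with Riemann--Lebesgue error terms in place of clean exponential asymptotics), one must identify the single dominant exponential $e_\sigma$, establish the \emph{lower} bound $|\Delta(\lambda)|\gtrsim |e_\sigma(\lambda)|$ off the spectrum from $\det T_{i z_\sigma B}(C,D)\neq 0$, and simultaneously the matching \emph{upper} bound $|N_g(\lambda)|\lesssim |e_\sigma(\lambda)|$; and then verify that condition (a) genuinely permits covering $\bC$ by sectors of opening $<\pi$ adapted to the three good directions. Everything else --- the resolvent identity, the reduction to an entire quotient, Liouville, and the biorthogonality giving minimality --- should be routine.
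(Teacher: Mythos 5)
Your plan is essentially the correct one, and it coincides with the actual proof of this theorem: note that the paper itself does not prove Theorem~\ref{th2.1_MO} but imports it from \cite[Theorem 1.2 and Corollary 3.3]{MalOri12}, where the argument is exactly the scheme you describe --- the resolvent formula of Lemma~\ref{lem:resolv.gen}, the representation $F_g=\Delta^{-1}N_g$ with sectorwise dominant exponentials whose leading coefficients are $\det T_{iz_\sigma B}(C,D)$, the lower bound on $\Delta$ along the three admissible directions supplied by weak $B$-regularity, and the observation that condition (a) of Definition~\ref{def:weakly.regular} forces the angular gaps between consecutive good rays to be strictly less than $\pi$, so that Phragm\'en--Lindel\"of and Liouville apply to the entire function $F_g$ of exponential type; minimality and the adjoint statement are handled by biorthogonality and by the equivalence of weak $B$-regularity of $(C,D)$ with weak $B^*$-regularity of the adjoint boundary matrices, just as you propose. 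The technical obstacle you single out (uniform upper and lower exponential bounds with only $L^1$ potentials, via Riemann--Lebesgue corrections) is indeed where the real work lies in \cite{MalOri12}, but it is carried out there along the lines you indicate.
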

%
%
\begin{corollary} \label{cor2.1}
Let $Q \in L^1\([0,1]; \bC^{n \times n}\)$ and let the matrices $T_{izB}(C,D)$ and
$T_{-izB}(C,D) = T_{izB}(D,C)$ be nonsingular for some $z \in \bC$. Then

(i) The boundary conditions~\eqref{eq:system} are weakly $B$-regular.

(ii) The system of EAF of the operator $L_{C,D}(Q)$ is complete and minimal in
$L^2\([0,1];\bC^n\)$.
\end{corollary}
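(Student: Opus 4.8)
The plan is to derive (ii) at once from (i) by invoking Theorem~\ref{th2.1_MO} (a weakly $B$-regular BVP with $Q\in L^1$ has a complete and minimal system of root functions), so the entire task reduces to producing three complex numbers meeting Definition~\ref{def:weakly.regular}.

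The first step is the elementary observation that the matrix $T_{iwB}(C,D)$ depends on the admissible parameter $w$ only through which of the real numbers $\Re(ib_1w),\ldots,\Re(ib_nw)$ are positive and which are negative: by construction its $k$th column is the $k$th column of $C$ when $\Re(ib_kw)>0$ and the $k$th column of $D$ when $\Re(ib_kw)<0$. Hence $w\mapsto\det T_{iwB}(C,D)$ is constant on each connected component of the open set $\{w:\Re(ib_kw)\ne0,\ k=1,\ldots,n\}$; as in the discussion preceding Definition~\ref{def1.1}, these components are the open sectors with vertex at the origin cut out by the lines $\{\Re(ib_kw)=0\}$, and since $B$ is nonsingular there is at least one such line, so every one of these sectors has angular width at most $\pi$.

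Now the admissible number $z$ supplied by the hypothesis lies in one of these sectors, say $\sigma$, and then $-z$ lies in the antipodal sector $-\sigma$; a sector of width at most $\pi$ contains no pair of antipodal points, so $\sigma$ and $-\sigma$ are distinct, hence disjoint. By the previous step, $\det T_{iwB}(C,D)=\det T_{izB}(C,D)\ne0$ for every $w\in\sigma$ and $\det T_{iwB}(C,D)=\det T_{-izB}(C,D)\ne0$ for every $w\in-\sigma$. It then remains to pick $z_1,z_2\in\sigma$ and $z_3\in-\sigma$ with the origin in the interior of $\triangle_{z_1z_2z_3}$. Writing $\sigma=\{re^{i\theta}:r>0,\ |\theta-\theta_0|<\alpha\}$ with $0<\alpha\le\pi/2$, I would take $z_1=e^{i(\theta_0+\alpha-\delta)}$, $z_2=e^{i(\theta_0-\alpha+\delta)}$ and $z_3=-e^{i\theta_0}$ for a small $\delta>0$; then $\tfrac12 z_1+\tfrac12 z_2=\cos(\alpha-\delta)\,e^{i\theta_0}$ is a strictly positive multiple of $-z_3$, so $0$ is a convex combination of $z_1,z_2,z_3$ with all three weights strictly positive, i.e.\ an interior point of the nondegenerate triangle $\triangle_{z_1z_2z_3}$. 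These three points then witness weak $B$-regularity of the boundary conditions \eqref{eq:BC.nxn}, which proves (i); and (ii) follows.

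The sign-pattern remark and the convex-combination computation are routine; the only point needing a moment's care is the degenerate case in which $\sigma$ is an open half-plane ($\alpha=\pi/2$, which occurs e.g.\ when $n=1$ or when all the $b_k$ are real multiples of one another), where one simply notes that $\cos(\alpha-\delta)=\sin\delta>0$ still holds, so the same choice of $z_1,z_2,z_3$ goes through — this is really the only "obstacle" worth flagging.
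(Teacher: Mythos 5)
Your argument is correct. The paper itself gives no proof of Corollary~\ref{cor2.1} (it is quoted from~\cite{MalOri12} alongside Theorem~\ref{th2.1_MO}), and what you supply is exactly the standard reasoning one would expect: $\det T_{iwB}(C,D)$ is constant on each of the open sectors cut out by the lines $\{\Re(ib_kw)=0\}$, the points $z$ and $-z$ lie in antipodal (hence disjoint) sectors each of angular width at most $\pi$, and two points of $\sigma$ together with one point of $-\sigma$ can be chosen so that the origin is interior to the resulting nondegenerate triangle, after which Theorem~\ref{th2.1_MO} yields part (ii). Your handling of the half-plane case $\alpha=\pi/2$ is the right detail to check, and it goes through as you say.
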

%
%
For $n\times n$ Dirac type system ($B = B^*$) the concept of {weakly regular}
BC~\eqref{eq:BC.nxn} coincides with that of regular ones and reads as follows
\begin{equation}\label{2.4Intro}
 \det(CP_{+} +\ DP_{-}) \ne 0 \quad\text{and} \quad \det(CP_{-} +\ DP_{+}) \ne 0.
\end{equation}
Here $P_+$ and $P_-$ are the spectral {projections} onto "positive"\ and
"negative"\ parts of the spectrum of $B=B^*$, respectively. Hence, by
Theorem~\ref{th2.1_MO}, under conditions~\eqref{2.4Intro} \emph{both operators
$L_{C,D}(Q)$ and $L_{C,D}(Q)^*$ are complete and minimal}. In the case $n=2$
condition~\eqref{2.4Intro} turns into $J_{14} J_{32} \ne 0$.

Consider system~\eqref{eq:system} with the matrix $B=\diag (b_1^{-1}, b_2^{-1})
\ne B^*$ assuming that $b_1/b_2 \notin \bR$. In this case the lines $\{\l \in \bC:
\Re(i b_j \l)=0\}$, $j \in \{1,2\}$, divide the complex plane in two pairs of
vertical sectors and Corollary~\ref{cor2.1} guarantees the completeness and the
minimality of the root system of problem~\eqref{eq:system}--\eqref{eq:BC} in the
following cases:
\begin{equation}\label{cond:w-reg}
 \text{(i)} \quad J_{14}J_{32} \not = 0\quad \text{or}\quad
 \text{(ii)} \quad J_{12}J_{34}\not =0.
\end{equation}
Note that the regularity of boundary conditions~\eqref{eq:BC.nxn} implies in
particular that
\begin{equation*}
 J_{14}J_{32}J_{12}J_{34}\ne 0.
\end{equation*}
\textbf{Acknowledgement.} The publication was prepared with the support of the
RUDN University Program 5-100.
%
%

%
%
\end{document}